\documentclass[11pt,a4paper,reqno]{amsart}
 
\usepackage[normalem]{ulem}

\usepackage[utf8]{inputenc}

\usepackage[top=2.7cm,bottom=2.7cm,left=2.4cm,right=2.4cm]{geometry}

\usepackage{amsfonts,amsmath,amssymb,amsthm,dsfont,amsxtra,enumerate,multicol,mathtools,tkz-berge,hyperref,graphicx}
\usepackage{enumitem}
\usepackage{listings}
\usepackage{xcolor}
\usepackage[ruled,vlined]{algorithm2e}

\definecolor{codegreen}{rgb}{0,0.6,0}
\definecolor{codegray}{rgb}{0.5,0.5,0.5}
\definecolor{codepurple}{rgb}{0.58,0,0.82}
\definecolor{backcolour}{rgb}{0.95,0.96,0.98}

\lstdefinestyle{mystyle}{
    backgroundcolor=\color{backcolour},   
    commentstyle=\color{codegreen},
    keywordstyle=\color{magenta},
    numberstyle=\tiny\color{codegray},
    stringstyle=\color{codepurple},
    basicstyle=\ttfamily\footnotesize,
    breakatwhitespace=false,         
    breaklines=true,                 
    captionpos=b,                    
    keepspaces=true,                 
    numbers=left,                    
    numbersep=5pt,                  
    showspaces=false,                
    showstringspaces=false,
    showtabs=false,                  
    tabsize=2
}

\newtheorem{theorem}{Theorem}[section]
\newtheorem{proposition}[theorem]{Proposition}
\newtheorem{lemma}[theorem]{Lemma}
\newtheorem{corollary}[theorem]{Corollary}

\theoremstyle{definition}

\newtheorem{remark}[theorem]{Remark}
\newtheorem{example}[theorem]{Example}
\newtheorem{notation}[theorem]{Notation}
\newtheorem{question}[theorem]{Question}

\newtheorem{construction}[theorem]{Construction}

\DeclareMathOperator{\reg}{reg}

\DeclareMathOperator{\N}{N}
\DeclareMathOperator{\lcm}{lcm}
\DeclareMathOperator{\supp}{supp}
\DeclareMathOperator{\level}{level}

\newcommand{\T}{\mathcal{T}}
\newcommand{\Sc}{\mathcal{S}}
\newcommand{\cN}{\mathcal{N}}

\newcommand{\G}{\mathcal{G}}

\newcommand{\lex}{\text{lex}}
\author[M. Amalore Nambi]{Marie Amalore Nambi}
\address{Sabanci University, Faculty of Engineering and Natural Sciences, Orta Mahalle, Tuzla, 34956, Istanbul, Turkey}
\email{amalore.p@gmail.com, amalore.pushparaj@sabanciuniv.edu}

\author[A. A. Qureshi]{Ayesha Asloob Qureshi}
\email{aqureshi@sabanciuniv.edu, ayesha.asloob@sabanciuniv.edu}

\title{Squarefree powers of closed neighborhood ideals}

\date{}

\begin{document}
\subjclass[2020]{13D02, 05E40, 05E45, 05C70.} 

\keywords{Squarefree power, closed neighborhood ideal, matching, componentwise linear, Castelnuovo-Mumford regularity}

\thanks{The authors are supported by the Scientific and Technological Research Council of Turkey T\"UB\.{I}TAK under the Grant No: 124F113, and thankful to T\"UB\.{I}TAK for their supports. }

\date{}

\begin{abstract}
    In this article, we characterize all trees whose highest non-vanishing squarefree power of the closed neighborhood ideal is componentwise linear. In addition, we investigate the Castelnuovo-Mumford regularity of the $\nu$-th squarefree power of the closed neighborhood ideal of trees and show that this number can be arbitrarily larger than the degree of the ideal. Finally, we give a formula for the regularity of $\nu$-th squarefree power of the closed neighborhood ideal of caterpillar graphs.
\end{abstract}

\maketitle

\section{Introduction}

The core of combinatorial commutative algebra lies in the study of the interplay between the algebraic properties of an ideal and the combinatorial structure of the underlying object. In particular, squarefree monomial ideals are naturally connected with combinatorial objects such as simplicial complexes, graphs, and clutters, offering the study of their algebraic and homological properties via the associated combinatorial data. The existence of a linear resolution has been one of the most extensively studied homological invariants. Recall that a \textit{linear resolution} is a minimal free resolution in which each differential map is represented by matrix that have entries in the set of linear forms. These ideals are well-behaved and computationally simpler to work with than most others. A remarkable result in this direction is due to Fr\"{o}berg \cite{F1990} who establishes that the edge ideal of a graph admits a linear resolution if and only if the graph is co-chordal. 

\vspace{2mm}

Herzog and Hibi \cite{HH1999} introduced the notion of componentwise linear ideals, which form the next best class after ideals with linear resolutions. Let $I \subseteq S=K[x_1,\ldots,x_n]$ be a graded ideal. Recall that $I$ is said to be \textit{componentwise linear} if, for each degree $k$, the ideal generated by all homogeneous elements of $I$ of degree $k$  has a linear resolution. It is known from \cite{HH1999} that a Stanley-Reisner ideal $I$ is sequentially Cohen-Macaulay if and only if its Alexander dual is componentwise linear. For further details on componentwise linearity, we refer the reader to the survey article \cite{HV2022}. In this context, we study the componentwise linearity of the highest non-vanishing squarefree power of the closed neighborhood ideal of trees is of particular importance.

\vspace{2mm}

Sharifan and Moradi \cite{SM2020} introduced the \textit{closed neighborhood ideal} $NI(G)$ of a finite simple graph $G$, a class of squarefree monomial ideals generated by monomials corresponding to the closed neighborhoods of the vertices of $G$. This ideal can be viewed as the facet ideal of the neighborhood complex $\cN(G)$ of $G$. From the perspective of the Stanley–Reisner complex, its Alexander dual is identified with the dominance complex. In recent years, several algebraic properties of closed neighborhood ideals such as regularity, projective dimension, Cohen-Macaulayness,   minimal irreducible decompositions, normally torsion-free property, and persistence property, have been studied in \cite{CJRS2025,HS2022, KNQ, NQBM2022, NQ2024, NBR2025, SM2020, S2025}.  To the best of our knowledge, this is the first study that focuses on squarefree power of these ideals in the literature.

\vspace{2mm}

Let $I$ be a monomial ideal. The $k$-th squarefree power of $I$, denoted by $I^{[k]}$, is the ideal generated by all squarefree monomials belonging to $I^k$. From a combinatorial perspective, the squarefree powers of $I$ are closely related to the matching theory of simplicial complexes or clutters. Specifically, the minimal generators of $I^{[k]}$ are $k$-matchings of the associated simplicial complex. The squarefree powers of an ideal carry important information about its ordinary powers. In particular, if $I^{[k]}$ does not have a componentwise linear, then $I^k$ does not have a componentwise linear (see \cite[Lemma 4.4]{HHZ2004}). Bigdeli, Herzog, and Zaare-Nahandi \cite[Theorem 5.1]{BHZ2018} showed that the highest non-vanishing squarefree power of the edge ideal of a graph has linear quotients. Kamberi, Navarra, and the second author of this article \cite{KNQ} showed that the highest non-vanishing squarefree power of the facet ideal of a simplicial tree with the intersection property has linear quotients. However, they further demonstrated that this is not true in general for simplicial trees. A natural question that arises in this context is the following.
\begin{question} \label{qus.CWL}
    Is the highest non-vanishing squarefree power of the closed neighborhood ideal of a tree is componentwise linear?
\end{question}
However, Question \ref{qus.CWL} does not hold in general as observed in Example \ref{ex.counter}. Motivated by this, we provide a combinatorial characterization of trees whose highest non-vanishing squarefree power of the closed neighborhood ideal is componentwise linear (see Theorems \ref{thm.1}, \ref{thm.2}, and \ref{thm.3}). 

\vspace{2mm}

The second part of this article is dedicated to study the Castelnuovo-Mumford regularity  (regularity) of the highest non-vanishing squarefree power of the closed neighborhood ideal of graphs. Sharifan and Moradi in \cite{SM2020} investigated the regularity of the closed neighborhood ideal of various classes of graphs. In particular, they obtained the exact formula for the regularity of the closed neighborhood ideal of path graphs, generalized star graphs, complete $r$-partite graphs and $m$-book graphs. Moreover, for a forest, they obtained a lower bound for the regularity of the closed neighborhood ideal in terms of the matching number. Chakraborty et al. \cite{CJRS2025} obtained a precise formula for the regularity of the closed neighborhood ideal of forests in terms of the matching number. Furthermore, they showed that, for any graph $G$, the matching number of $G$ provides a lower bound for $\reg(S/NI(G))$. A natural question is whether the difference between the regularity of the highest non-vanishing squarefree power of the closed neighborhood ideal of a graph and the degree of the ideal can become arbitrarily large. More precisely, we may ask the following question.

\begin{question} \label{qus.regm}
    Given an integer $m$. Is there exists a connected graph such that $\reg(S/(NI(G)^{[\nu]}))-\deg(NI(G)^{[\nu]})=m$?
\end{question}
We give a positive answer to Question \ref{qus.regm} (see Theorems \ref{thm.regm} and \ref{thm.regm2}). This indicates that determining the regularity of the highest non-vanishing squarefree power of the closed neighborhood ideal for an arbitrary tree is a hard problem. Therefore, we conclude by focusing on a specific class of trees, namely caterpillar graphs (see Theorem \ref{thm.reg}).

\vspace{2mm}


\section{Required Ingredients} \label{sec.P}

We first recall some basic definitions, notations, and results from graph theory and commutative algebra that will be used throughout the subsequent sections.

\subsection*{Graph Theory} Let $G$ be a finite simple graph with vertex set $V(G)$ and edge set $E(G)$. For a subset $U \subseteq V(G)$, the \textit{induced subgraph} of $G$ on $U$, denoted by $G[U]$, is the graph with vertex set $U$ and edge set $E(G[U])=\{e\in E(G) : e \subseteq U\}$.

A sequence of vertices $v_1, \ldots, v_k$ in $G$ is said to be a \textit{path} of length $k-1$ if $\{v_i,v_{i+1}\} \in E(G)$ for all $ 1\leq i \leq k-1$. Given any $u,v\in V(G)$, we set the length of the shortest path connecting $u$ and $v$ by $d(u,v)$. Moreover, a sequence of vertices $v_1, \ldots, v_k$ in $G$ is said to be a \textit{cycle} of length $k$ if $\{v_i,v_{i+1}\} \in E(G)$ for all $ 1\leq i \leq k$ where $v_1=v_{k+1}$. A graph $G$ is called a \textit{forest} if it does not contain a cycle as an induced subgraph. A {\it tree} is a connected forest. 

For a vertex $u \in V(G)$, let $G\setminus u$ denote the induced subgraph of $G$ on the vertex set $V(G) \setminus \{u\}$. For a vertex $u \in V(G)$, the set of neighbors of $u$, denoted by $N_G(u)$, is the set $\{v \in V(G) : \{u,v\} \in E(G)\}$. The degree of the vertex $u$ in $G$ is the cardinality of $N_G(u)$, and is denoted by $\deg_G(u)$. The set of closed neighbors of $u$, denoted by $N_G[u]$, is the set $N_G(u) \cup \{u\}$.

\subsection*{Neighborhood Complex} A simplicial complex $\Delta$ on the vertex set $V(\Delta)=[n]$ is a non-empty collection of subsets of $V(\Delta)$ such that if $F \in \Delta$ and $G \subseteq F$, then $G \in \Delta$. The elements of $\Delta$ are called faces. For any $F \in \Delta$, the dimension of $F$ is one less than the cardinality of $F$. The maximal faces of $\Delta$ with respect to inclusion are called facets, and the set of all facets of $\Delta$ is denoted by $\mathcal{F}(\Delta)$. A subcollection of a simplicial complex $\Delta$ is a simplicial complex whose facet set is a subset of the facet set of $\Delta$.

A facet $F$ of a simplicial complex $\Delta$ is called a leaf if either $F$ is the only facet of $\Delta$ or there exists a facet $G \in \Delta$ such that $G\neq F$ and $F \cap G \supseteq F \cap H$ for all facets $H \neq F$ of $\Delta$. A simplicial complex $\Delta$ is a {\it simplicial forest} if every nonempty subcollection of $\Delta$ has a leaf. A connected simplicial forest is called a {\it simplicial tree}.

The closed neighborhood complex of a graph $G$, denoted by $\mathcal{N}(G)$, is the simplicial complex on vertex set $V(G)$ defined by 
\begin{equation*}
\begin{split}
\mathcal{N}(G)=&\{F \subseteq V(G) : \text{there exists vertex } u \text{ such that } F \subseteq N_G[u] 
\text{ and } N_G[v] \not\subseteq F \\& \text{ for all } v \in V(G) \setminus u
\}.
\end{split}
\end{equation*}

Set $[n]=\{1,\ldots,n\}$. Below, we present an example of the closed neighborhood complex of a graph.

\begin{example} \label{ex.facet}
    Let $G$ be a graph on $[10]$ with edge set $$E(G)=\{\{1,2\},\{2,3\},\{3,4\},\{4,5\},\{5,6\},\{6,7\},\{7,8\},\{8,9\},\{5,10\}\}.$$ Then the facets of the closed neighborhood complex $\cN(G)$ are $$\mathcal{F}(\cN(G))=\{\{1,2\},\{5,10\},\{8,9\},\{2,3,4\},\{3,4,5\},\{5,6,7\},\{6,7,8\}\}.$$
\end{example}

As shown in \cite{B1989}, the following remark recalls that the closed neighborhood complex of a tree is a simplicial forest. 

\begin{remark} \cite[Chapter 5, Example 3, page 174]{B1989} \cite[Theorem 3.2]{HHTZ2008}
Let $G$ be a tree. Then the closed neighborhood complex $\mathcal{N}(G)$ is totally balanced. Equivalently, $\mathcal{N}(G)$ is a simplicial forest.
\end{remark}

\subsection*{Squarefree powers and matchings} Let $S = K[x_1,\ldots, x_n]$ be a polynomial ring in $n$ variables over a field $K$. Let $I$ be a monomial ideal of $S$. Let $\mathcal{G}(I)$ denote the minimal monomial generating set of a monomial ideal $I$. 

We denote by $I^{[a]}$ the ideal generated by the squarefree monomials belonging to $I^a$, by $I_{\langle a \rangle}$ the ideal generated by all homogeneous elements of degree $a$ belonging to $I$, and by $\deg(I)$ the largest degree among the minimal generators of $I$.

For a subset $A\subseteq [n]$ we associate a squarefree monomial ${\bf x}_A := \prod_{u\in A} x_u \in S$, and the set $A$ is called the support of ${\bf x}_A$, denoted by $\supp({\bf x}_A)$. Let $\Delta$ be a simplicial complex. The facet ideal of $\Delta$ is defined as $I(\Delta)=({\bf x}_F : F\in \mathcal{F}(\Delta)) \subset S$. The facet ideal of the closed neighborhood complex of a graph $G$ is called the closed neighborhood ideal of $G$, denoted by $NI(G)$.

\begin{example}
    Let $G$ be the graph as in Example \ref{ex.facet}, and let $\cN(G)$ be its closed neighborhood complex. Then the closed neighborhood ideal of $G$ is
    $$NI(G)= (x_1x_2,x_5x_{10},x_8x_9,x_2x_3x_4,x_3x_4x_5,x_5x_6x_7,x_6x_7x_8) \subset S=K[x_1,\ldots,x_{10}].$$
\end{example}

A \textit{matching} of $\Delta$ is a set of pairwise disjoint facets of $\Delta$. For a matching $M$ of $\Delta$, we set 
\[
V_M=\{u\in V(\Delta) : u \in F \text{ and } F \in  M\}.
\]
A matching consisting of $k$ facets is referred to as a $k$-matching. A $k$-matching is called maximal, if $\Delta$ does not admit any $(k+1)$-matching. The \textit{matching number} of $\Delta$ is the size of a maximal
matching of $\Delta$ and is denoted by $\nu(\Delta)$. Let $G$ be a graph and $M$ be a $k$-matching of $\cN(G)$. For a subgraph $H$ of $G$, we denote by 
\[
M_{H}=\{F\in M : F \subseteq V(H)\},
\]
the subcollection of facets of $M$ whose vertex sets are contained in $V(H)$.

\vspace{2mm}

We demonstrate the above definitions with the following example.

\begin{example}
    Let $G$ be the graph described in Example \ref{ex.facet}, and let $\cN(G)$ denote its closed neighborhood complex. Then $M=\{\{1,2\},\{5,10\},\{8,9\}\}$ is a maximal matching of $\cN(G)$, and hence $\nu(\cN(G))=3$. Let $H=G\setminus \{5\}$ be the induced subgraph of $G$. Then ${M}_{H}=\{\{1,2\},\{8,9\}\}$.
\end{example}

\subsection{Componentwise Linearity and Linear Relations}
Let $S = K[x_1,\ldots, x_n]$ be a polynomial ring over a field $K$ and $I$ be a homogeneous ideal of $S$. Let $\mathbf{F}_{\bullet}$ be a minimal graded $S$-free resolution of $I$:
$$\mathbf{F}_{\bullet}:  \oplus_{j \in \mathbb{Z}}S(-j)^{\beta_{p,j}(I)} \stackrel{\phi_{p}} \longrightarrow  \oplus_{j \in \mathbb{Z}}S(-j)^{\beta_{p-1,j}(I)} \longrightarrow  \cdots \longrightarrow  \oplus_{j \in \mathbb{Z}}S(-j)^{\beta_{0,j}(I)} \stackrel{\phi_{0}} \longrightarrow I\longrightarrow  0,$$ where $p \leq n$. The numbers $\beta_{i,j}(I)$ are called the $(i,j)$-th graded Betti numbers of $I$. 
The \emph{Castelnuovo-Mumford regularity} or simply  \emph{regularity} of $I$ over $S$, denoted by $\reg(I)$, is defined as
$$\reg(I) \coloneqq \max \{j : \beta_{i,i+j}(I) \neq 0, \text{ for some } i\}.$$
Moreover, we have $\beta_{i,j}(I)=\beta_{i+1,j}(S/I)$ for all $i\geq 0$ and $j \in \mathbb{Z}$.

\vspace{2mm}

The following lemma is a key ingredient that will be used repeatedly in Section \ref{sec.regularity}.
\begin{lemma}\cite[Lemma 3.1]{HT2010} \label{Lemma1.Reg}
Let $I$ be a homogeneous ideal of $S$ and $f$ be a degree $d$ monomial of $S$. Consider $ 0 \rightarrow S/(I:f)  (-d)\rightarrow S/(I) \rightarrow S/(I+f)  \rightarrow  0$ the short exact sequence. Then one has
     $$\reg(S/(I+f)) \leq \max \{\reg(S/(I:f))+d-1, \reg(S/I)\}.$$
     The equality holds if $\reg(S/(I:f))+d \neq \reg(S/I)$.
\end{lemma}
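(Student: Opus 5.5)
The statement is the standard regularity comparison coming from the short exact sequence
$$0 \rightarrow S/(I:f)(-d)\xrightarrow{\cdot f} S/I \rightarrow S/(I+f)\rightarrow 0.$$
I will derive it from the long exact sequence of $\operatorname{Tor}$. First I check exactness. The map $S/(I:f)(-d)\to S/I$ is multiplication by $f$. It is well defined and injective because $gf\in I$ if and only if $g\in (I:f)$. Its image is $(I+(f))/I$, so the cokernel is $S/(I+f)$. The twist $(-d)$ makes the map degree-preserving.

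Write $A=S/(I:f)(-d)$, $B=S/I$, $C=S/(I+f)$, and apply $\operatorname{Tor}^S_\bullet(-,K)$. Taking the degree-$j$ graded pieces gives the exact sequence
$$\cdots\to \operatorname{Tor}_i(B,K)_j\to \operatorname{Tor}_i(C,K)_j\to \operatorname{Tor}_{i-1}(A,K)_j\to \cdots.$$
So $\beta_{i,j}(C)\neq 0$ forces either $\beta_{i,j}(B)\neq 0$ or $\beta_{i-1,j}(A)\neq 0$. Also $\beta_{i-1,j}(A)=\beta_{i-1,j-d}(S/(I:f))$. From $\beta_{i,j}(B)\ne0$ we get $j-i\le \reg(S/I)$. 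From the second case we get $(j-d)-(i-1)\le \reg(S/(I:f))$, that is, $j-i\le \reg(S/(I:f))+d-1$. Taking the maximum over $i,j$ yields the inequality. Equivalently, one can use the standard facts $\reg(A)=\reg(S/(I:f))+d$ and $\reg C\le\max\{\reg B,\reg A-1\}$.

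For the equality when $\reg A\neq \reg B$, I split into two cases.

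\emph{Case $\reg A>\reg B$.} Choose $i,j$ with $\beta_{i-1,j}(A)\neq0$ and $j-(i-1)=\reg A$. The next term in the sequence is $\operatorname{Tor}_{i-1}(B,K)_j$. It vanishes, because $j-(i-1)=\reg A>\reg B$. Hence the connecting map $\operatorname{Tor}_i(C,K)_j\to\operatorname{Tor}_{i-1}(A,K)_j$ is surjective onto a nonzero space. Therefore $\beta_{i,j}(C)\neq0$, which gives $\reg C\ge j-i=\reg A-1$.

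\emph{Case $\reg B>\reg A$.} Choose $i,j$ with $\beta_{i,j}(B)\neq0$ and $j-i=\reg B$. The preceding term is $\operatorname{Tor}_i(A,K)_j$. It is zero, since $j-i=\reg B>\reg A$. So $\operatorname{Tor}_i(B,K)_j$ injects into $\operatorname{Tor}_i(C,K)_j$, and $\reg C\ge\reg B$.

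In both cases $\reg C\ge \max\{\reg A-1,\reg B\}$, and combined with the inequality this gives equality.

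The only delicate point is the bookkeeping of the index shift $i\mapsto i-1$ and of the twist by $d$. Once these are tracked correctly, the rest of the argument is routine.
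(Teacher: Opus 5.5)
Your proof is correct. The long exact $\operatorname{Tor}$ sequence argument, with the shift $i\mapsto i-1$ and the twist by $d$ tracked as you do, gives both the inequality and the equality whenever $\reg(S/(I:f))+d\neq\reg(S/I)$, including the degenerate case $f\in I$, where $S/(I:f)=0$.

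The paper gives no proof of its own here; it simply quotes Lemma 3.1 of [HT2010]. Your derivation is the standard argument behind that citation, and it only uses that $f$ is homogeneous, not that it is a monomial.
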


A homogeneous ideal $I\subset S$ has a $d$-\textit{linear resolution} if $\beta_{i,j}(I)=0$ for all $i \geq 0$ and for $j \neq i+d$. A homogeneous ideal $I\subset S$ has \textit{linear quotients} if there is a system of minimal homogeneous generators $u_1, u_2,\ldots,u_n$ such that the colon ideal $(u_1,\dots,u_{i-1}):u_i$ is generated by linear forms for each $i = 2,\ldots,n$. From \cite[Proposition 8.2.1]{HH2011}, it follows that if $I$ has linear quotients and generated in degree $d$ then $I$ has a $d$-linear resolution. A homogeneous ideal $I\subset S$ is said to be \textit{componentwise linear} if $I_{\langle j \rangle}$ has a linear resolution for all $j$. From \cite[Theorem 8.2.15]{HH2011}, it follows that if $I$ has linear quotients then $I$ is componentwise linear. An ideal $I\subset S$, generated by homogeneous elements of degree $d$, is said to be \textit{linearly related} if $\beta_{1,j}(I) = 0$ for all $j \neq 1 + d$. In particular, if $I$ is not linearly related then $I$ is not componentwise linear.

\vspace{2mm}

It is known from \cite[Theorem 5.1]{BHZ2018} that the $\nu$-th squarefree power of the edge ideal of a graph has linear quotients, and hence is componentwise linear. However, this result does not hold in general for monomial ideals. In the following example, we show that the $\nu$-th squarefree power of the closed neighborhood ideal of a graph does not need to be componentwise linear. 

\begin{example} \label{ex.counter}
    Let $G$ be the graph given in Example \ref{ex.facet}, and let $\cN(G)$ denote its closed neighborhood complex. Then $\nu(\cN(G))=3$. Macaulay2 \cite{GS} computation shows that $(NI(G)^{[3]})_{\langle 7 \rangle}$ does not have a linear resolution, therefore the ideal $NI(G)^{[3]}$ is not componentwise linear. 
\end{example}

In the following, we recall a useful tool from \cite{BHZ2018} to investigate the linearly related property of monomial ideals.

\vspace{2mm}

 Let $I$ be a monomial ideal generated in degree $d$. In \cite{BHZ2018}, the authors associated a graph $G_I$ to $I$ as follows: $V(G_I)=\G(I)$, and  $\{u,v\} \in E(G_I)$ if and only if $\deg(\mathrm{lcm}(u,v))=d+1$. Moreover, for all $u,v\in \G(I)$, the induced subgraph of $G_I$ on the vertex set $\{w\in V(G_I): w\text{ divides } \lcm(u,v)\}$ is denoted by $G^{(u,v)}_{I}$. 
  
		\begin{theorem}\cite[Corollary 2.2]{BHZ2018}\label{Rem.LP}
		   Let $I$ be a monomial ideal generated in degree d. Then $I$ is linearly related if and only if for all $u,v\in \G(I)$ there is a path in $G^{(u,v)}_{I}$ connecting $u$ and $v$. 
		\end{theorem}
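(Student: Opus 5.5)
The plan is to translate linear relatedness into a statement about generators of the first syzygy module of $I$, and then use the standard pairwise syzygies of monomial ideals. Let $\G(I)=\{u_1,\dots,u_r\}$, let $F=\bigoplus_i Se_i$ with $e_i\mapsto u_i$, and write $Z=\ker(F\to I)$. Give $F$ the $\mathbb{Z}^n$-grading with $\deg e_i=\deg u_i$. It is well known (Taylor resolution / Schreyer) that $Z$ is generated by the binomial syzygies
\[
\sigma_{u,v}=\frac{\lcm(u,v)}{u}\,e_u-\frac{\lcm(u,v)}{v}\,e_v,\qquad u,v\in\G(I),
\]
where $\sigma_{u,v}$ is homogeneous of multidegree $\lcm(u,v)$. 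By definition, $I$ is linearly related if and only if $Z$ is generated in degree $d+1$. Since every $\sigma_{u,v}$ with $\deg\lcm(u,v)=d+1$ is exactly a syzygy coming from an edge of $G_I$, linear relatedness is equivalent to: every $\sigma_{u,v}$ lies in the submodule $L$ generated by the $\sigma_{w,w'}$ with $\{w,w'\}\in E(G_I)$.

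For sufficiency, suppose $u=w_0,w_1,\dots,w_k=v$ is a path in $G^{(u,v)}_I$. Each $w_i$ divides $m:=\lcm(u,v)$, so $\lcm(w_i,w_{i+1})$ divides $m$. I would then check the telescoping identity
\[
\sigma_{u,v}=\sum_{i=0}^{k-1}\frac{m}{\lcm(w_i,w_{i+1})}\,\sigma_{w_i,w_{i+1}} .
\]
The coefficient of $e_{w_i}$ for $0<i<k$ is $(m/w_i)-(m/w_i)=0$, and the endpoints give $(m/u)e_u-(m/v)e_v$. Hence $\sigma_{u,v}\in L$ for all $u,v$, so $Z=L$ and $I$ is linearly related.

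For necessity, assume $Z=L$ and fix $u,v\in\G(I)$ with $m=\lcm(u,v)$. Since $Z$ is multigraded, we can take the multidegree-$m$ component of an expression of $\sigma_{u,v}$ in terms of the generators of $L$. This gives
\[
\sigma_{u,v}=\sum c_{w,w'}\frac{m}{\lcm(w,w')}\sigma_{w,w'},\qquad c_{w,w'}\in K,
\]
where the sum runs over edges $\{w,w'\}$ of $G_I$ with $\lcm(w,w')\mid m$. In particular $w,w'\mid m$, so every edge used lies in $G^{(u,v)}_I$. Let $C$ be the connected component of $G^{(u,v)}_I$ containing $u$, and define a $K$-linear functional $\varphi$ on $F_m$ by sending $(m/w)e_w$ to $1$ if $w\in C$ and to $0$ otherwise. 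Each $\frac{m}{\lcm(w,w')}\sigma_{w,w'}$ equals $(m/w)e_w-(m/w')e_{w'}$ with $w,w'$ in the same component, so it is killed by $\varphi$. On the other hand, if $v\notin C$ then $\varphi(\sigma_{u,v})=1$, which is a contradiction. Therefore $v\in C$, i.e.\ $u$ and $v$ are connected by a path in $G^{(u,v)}_I$.

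The main point needing care is the reduction in the necessity direction to a $K$-linear combination in the single multidegree $m$. This uses that $Z$ and the generators of $L$ are $\mathbb{Z}^n$-homogeneous, and that a monomial multiple of $\sigma_{w,w'}$ landing in degree $m$ forces $\lcm(w,w')\mid m$. Once that reduction is in place, the component-indicator functional finishes the argument directly.
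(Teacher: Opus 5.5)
The paper gives no proof of this statement. It is quoted from Bigdeli--Herzog--Zaare-Nahandi and used only through its ``only if'' direction in Theorems~\ref{thm.1} and~\ref{thm.2}. There is therefore no internal argument to compare against.

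Your proposal is a correct and self-contained proof. Sufficiency follows from the telescoping identity. Necessity follows by restricting to the multidegree $m=\lcm(u,v)$, which is legitimate because $L$ is generated by $\mathbb{Z}^n$-homogeneous elements, and then applying the component-indicator functional on $F_m$. That space has $K$-basis $\{(m/w)e_w : w\in\G(I),\ w\mid m\}$.

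One step deserves a line more than you gave it: the implication ``linearly related $\Rightarrow Z=L$'' needs $Z_{d+1}\subseteq L$. This holds because the $\sigma_{u,v}$ generate $Z$ and all have degree at least $d+1$. Hence $Z_{d+1}$ is exactly the $K$-span of the $\sigma_{u,v}$ with $\deg\lcm(u,v)=d+1$, that is, of the edge syzygies of $G_I$.

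Your necessity argument uses only the hypothesis $\sigma_{u,v}\in L$, not the full equality $Z=L$. So it actually proves a sharper pairwise statement: $\sigma_{u,v}\in L$ if and only if $u$ and $v$ lie in the same connected component of $G^{(u,v)}_I$. This is exactly the form the paper exploits. There, a single pair $u,v$ that is disconnected in $G^{(u,v)}_{I_{\langle a\rangle}}$ already shows that $\sigma_{u,v}$ is not a combination of linear syzygies. Consequently $I_{\langle a\rangle}$ has a minimal first syzygy of degree larger than $a+1$.
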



\section{Some lemmas on matching of simplicial trees} \label{sec.MG}

In this section, we establish several combinatorial properties of $k$-matchings of simplicial trees, which will be used in the subsequent section.

\begin{construction}\label{cons:bipartite}
    Let $\Delta$ be a simplicial complex, and let $M$ and $N$ be two distinct $k$-matchings of $\Delta$. Then there exist $\hat{M}\subset M$ and $\hat{N}\subset N$ such that $\hat{M}\cap \hat{N}=\emptyset$ and $M\setminus \hat{M}= N\setminus \hat{N}$. That is, $\hat{M}$ and $\hat{N}$ are obtained by removing the common elements of $M$ and $N$. Then $\hat{M}$ and $\hat{N}$ form a $\hat{k}$-matching of $\Delta$ with $\hat{k}\leq k$. Let $\hat{M}=\{F_1, \ldots, F_{\hat{k}}\}$ and $\hat{N}=\{G_1, \ldots, G_{\hat{k}}\}$. We construct the bipartite graph $B_{(M,N)}$ on the vertex set $\hat{M} \sqcup \hat{N}$ and the edge set $\{ \{F_i,G_j\} : F_i \cap G_j \neq \emptyset\}$.
\end{construction}

The following remark follows from the construction above.

\begin{remark}\label{rem=cons}
     Following the notation in Construction~\ref{cons:bipartite}, if $F_i \subset V_{\hat{N}}=\bigcup_{j=1}^{\hat{k}}G_j$, then the degree of $F_i$ in $B_{(M,N)}$ is at least $2$. 
\end{remark}

We next show that the graph $B_{(M,N)}$ associated to two $k$-matchings of a simplicial tree has a particularly simple structure.

\begin{lemma} \label{lem.tree}
    Let $\Delta$ be a simplicial tree, and let $M$ and $N$ be two distinct $k$-matchings of $\Delta$. Then the bipartite graph $B_{(M,N)}$ defined in Construction~\ref{cons:bipartite} is acyclic. 
\end{lemma}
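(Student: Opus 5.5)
The plan is to argue by contradiction: assume $B_{(M,N)}$ contains a cycle, and use it to build a subcollection of $\Delta$ with no leaf. This contradicts $\Delta$ being a simplicial tree, since by definition every nonempty subcollection of a simplicial forest has a leaf. Two facts do most of the work: the facets in $\hat{M}$ are pairwise disjoint, and so are those in $\hat{N}$.

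First I fix notation. Since $B_{(M,N)}$ is bipartite, a cycle in it has even length $2r$ with $r\ge 2$. I take a simple cycle, relabelled as
\[
F_1, G_1, F_2, G_2, \ldots, F_r, G_r, F_1,
\]
with $F_i\in \hat{M}$, $G_i\in \hat{N}$, all $2r$ facets distinct, and indices taken modulo $r$. The facets are genuinely distinct as subsets because $\hat{M}\cap\hat{N}=\emptyset$. By the definition of the edges of $B_{(M,N)}$, I then choose vertices
\[
x_i\in F_i\cap G_i \quad\text{and}\quad y_i\in G_{i-1}\cap F_i .
\]

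The key observation is that each chosen vertex lies in exactly two facets of the cycle. Take $x_i$. It lies in $F_i$, and since the facets of $\hat M$ are pairwise disjoint, it lies in no other $F_j$. It lies in $G_i$, and since the facets of $\hat N$ are pairwise disjoint, it lies in no other $G_j$. So $x_i$ belongs to exactly $F_i$ and $G_i$ among the facets of the cycle; in the same way $y_i$ belongs to exactly $G_{i-1}$ and $F_i$. In particular $x_i\neq y_i$, since otherwise this vertex would lie in both $G_i$ and $G_{i-1}$, which are distinct because $r\ge 2$.

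Now let $\Gamma$ be the subcollection of $\Delta$ whose facets are $F_1,\dots,F_r,G_1,\dots,G_r$. I show it has no leaf. Suppose some facet of $\Gamma$ were a leaf; by symmetry say it is $F_i$, with a facet $H\neq F_i$ of $\Gamma$ satisfying $H\cap F_i\supseteq K\cap F_i$ for every facet $K\neq F_i$ of $\Gamma$.
\begin{itemize}
\item Applying this with $K=G_i$ gives $x_i\in H$. Since the only facet of $\Gamma$ other than $F_i$ containing $x_i$ is $G_i$, this forces $H=G_i$.
\item Applying it with $K=G_{i-1}$ gives $y_i\in H$, which similarly forces $H=G_{i-1}$.
\end{itemize}
But $G_i\neq G_{i-1}$, a contradiction. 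The case of a leaf $G_i$ is identical, using $x_i$ and $y_{i+1}$. Hence $\Gamma$ has no leaf, contradicting that $\Delta$ is a simplicial tree, so $B_{(M,N)}$ is acyclic.

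The main point requiring care is the "exactly two facets" step, together with checking that the cycle's facets are distinct. Both follow from disjointness within each matching and from $\hat M\cap\hat N=\emptyset$. So I expect the argument to be short; equivalently, it exhibits a special cycle, which simplicial forests cannot contain.
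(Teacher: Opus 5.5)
Your proof is correct and follows the paper's argument: the facets on a cycle of $B_{(M,N)}$ form a subcollection with no leaf, because each such facet meets its two cycle neighbours in disjoint sets (by the matching property), so no single facet can serve as the witness. The only difference is minor. The paper takes a cycle of minimal length to ensure the putative leaf meets only its two cycle neighbours. Your observation that each chosen vertex $x_i, y_i$ lies in exactly two facets of the cycle makes minimality unnecessary, and it handles both the $F_i$ and $G_i$ leaf cases explicitly.
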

\begin{proof}
Suppose, to the contrary, that $B_{(M,N)}$ has a cycle 
\[
\mathcal{C}= F_{1},G_{1},F_{2}, \ldots,F_{q},G_{q},F_{1}
\]
of minimal length $q\geq 4$, after reordering the vertices if necessary.
Consider the subcomplex $\Delta'=\langle F_{1},\ldots,F_{q}, G_{1},\ldots, G_{q} \rangle\subseteq \Delta$. Then $\Delta'$ is also a simplicial forest with a leaf, say $G_{1}$. It follows from the construction of $B_{(M,N)}$ that $G_{1} \cap F_{1}$ and $G_{1} \cap F_{2}$ are non-empty, and by the minimality of $q$, $G_1\cap F_i=\emptyset$ for all $3\leq i\leq q$. Since $M$ is a matching, $F_1$ and $F_2$ are disjoint, and therefore $G_{1} \cap F_{1}$ and $G_{1} \cap F_{2}$ are also disjoint. This implies that $G_{1}$ is not a leaf of $\Delta'$, a contradiction to $\Delta$ being a simplicial tree.
\end{proof}

\begin{proposition} \label{Prop2.m=n}
    Let $\Delta$ be a simplicial tree, and let $M$ and $N$ be two $k$-matchings of $\Delta$. Suppose that $V_{M} \subseteq V_{N}$. Then $M=N$.
\end{proposition}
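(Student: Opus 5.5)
Suppose, for contradiction, that $M\neq N$, and pass to the reduced matchings $\hat{M}=\{F_1,\ldots,F_{\hat{k}}\}$ and $\hat{N}=\{G_1,\ldots,G_{\hat{k}}\}$ of Construction~\ref{cons:bipartite}, with $\hat{k}\geq 1$. Since $M\setminus\hat{M}=N\setminus\hat{N}$ and all facets of $N$ are pairwise disjoint, the common facets cover exactly the vertices of $V_N\setminus V_{\hat N}$. Hence no $F_i$ meets a common facet: such a common facet would lie in $M$ and meet $F_i\in M$. Therefore $V_M\subseteq V_N$ gives $V_{\hat M}\subseteq V_{\hat N}$, i.e.\ every $F_i\subseteq \bigcup_j G_j$. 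The plan is to derive a contradiction from this inclusion via a degree count in $B_{(M,N)}$.

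First I claim every $F_i$ has degree at least $2$ in $B_{(M,N)}$. Since $F_i\subseteq V_{\hat N}$, the facet $F_i$ meets some $G_j$. If it met only one $G_j$, then $F_i\subseteq G_j$, and since both are facets of $\Delta$ this forces $F_i=G_j$. That contradicts $\hat M\cap\hat N=\emptyset$. This is exactly Remark~\ref{rem=cons}; I would spell out the facet-containment argument to justify it.

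Next, every $G_j$ has degree at least $1$. Otherwise $G_j$ would be disjoint from $V_{\hat M}$, while a vertex count gives $|V_{\hat M}|\le |V_{\hat N}\setminus G_j|$. I have to be careful here, because facets may have different sizes. The cleaner route is to avoid needing this and argue with edges and components instead. By Lemma~\ref{lem.tree}, $B_{(M,N)}$ is a forest. Take a connected component $C$ containing some $F_i$; it is a tree with $a$ vertices from $\hat M$ and $b$ vertices from $\hat N$. Every $F$-vertex of $C$ has degree at least $2$, and all neighbours of these vertices lie in $C$. Counting edges gives $2a\le |E(C)| = a+b-1$, so $b\ge a+1$. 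Summing over the components containing $F$'s, and noting that components made only of $G$'s add further $G$'s, we get $\hat k=|\hat N|\ge |\hat M|+(\text{number of such components})>|\hat M|=\hat k$, a contradiction. So $M=N$.

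The main obstacle is the first step: the reduction to $V_{\hat M}\subseteq V_{\hat N}$ and the justification that $F_i\subseteq G_j$ implies $F_i=G_j$. The latter uses that facets are maximal faces, so neither can properly contain the other. Once degree $\ge2$ is established, the forest edge count finishes the proof without any size comparison of facets.
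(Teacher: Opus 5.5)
Your proof is correct and follows the paper's argument: reduce to $V_{\hat M}\subseteq V_{\hat N}$, use facet maximality to get $\deg F_i\ge 2$ in $B_{(M,N)}$, and contradict the acyclicity from Lemma~\ref{lem.tree} by counting edges. The paper makes the count globally ($|E|\ge 2\hat k$ against at most $2\hat k-1$ edges in a forest on $2\hat k$ vertices); your componentwise count is the same inequality summed over components, so the detour through components is unnecessary but harmless.
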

\begin{proof}
Following Construction~\ref{cons:bipartite}, consider the bipartite graph $B_{(M,N)}$. Since $V_{M} \subseteq V_{N}$, by the construction of $\hat{M}$ and $\hat{N}$ it follows that $V_{\hat{M}} \subseteq V_{\hat{N}}$. In particular, for each $F_i \in \hat{M}$, we have $F_i \subseteq V_{\hat{N}}$. Then, following Remark~\ref{rem=cons}, the degree of $F_i$ is at least $2$ for all $1\leq i\leq \hat{k}$. Consequently, the number of edges of $B_{(M,N)}$ is at least $2\hat{k}$. 

On the other hand, since $B_{(M,N)}$ is acyclic by Lemma~\ref{lem.tree}, an acyclic graph on $2\hat{k}$ vertices has at most $2\hat{k}-1$ edges. This is a contradiction. Hence $M=N$, as desired.
\end{proof}

As an immediate consequence, we obtain the following description of the elements of $\mathcal{G}(NI(\T)^{[\nu]})$, where $\T$ is a tree. 

\begin{corollary} \label{cor.unique}
    Let $\T$ be a tree, and let $\mathcal{N}(\T)$ be the closed neighborhood complex with $\mathcal{F}(\mathcal{N}(\T))=\{F_1,\ldots,F_n\}$. Then each $u \in \mathcal{G}(NI(\T)^{[k]})$ can be written uniquely as 
    \[
    u={\bf x}_{F_{i_1}}\cdots {\bf x}_{F_{i_k}},
    \]
    where $M=\{F_{i_1},\ldots, F_{i_k}\}$ is a $k$-matching of $\mathcal{N}(\T)$.
\end{corollary}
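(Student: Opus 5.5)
The plan is to prove existence first and then reduce uniqueness to Proposition~\ref{Prop2.m=n}. Throughout we use that $\mathcal{N}(\T)$ is a simplicial forest, by the remark recalled from \cite{B1989,HHTZ2008}.

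For existence, recall that $NI(\T)^{k}$ is generated by the products ${\bf x}_{F_{i_1}}\cdots{\bf x}_{F_{i_k}}$ of $k$ facet monomials, repetitions allowed. A squarefree monomial lies in $NI(\T)^k$ exactly when it is divisible by one of these products that is itself squarefree. Such a product is squarefree precisely when the facets $F_{i_1},\ldots,F_{i_k}$ are pairwise disjoint, that is, when they form a $k$-matching $M$, and in that case the product equals ${\bf x}_{V_M}$. So $NI(\T)^{[k]}$ is generated by the monomials ${\bf x}_{V_M}$, with $M$ ranging over the $k$-matchings of $\mathcal{N}(\T)$. Every minimal generator $u$ is therefore of the form ${\bf x}_{V_M}={\bf x}_{F_{i_1}}\cdots{\bf x}_{F_{i_k}}$ for some $k$-matching $M$.

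For uniqueness, suppose $u={\bf x}_{V_M}={\bf x}_{V_N}$ for two $k$-matchings $M$ and $N$. Then $V_M=V_N$, and in particular $V_M\subseteq V_N$. Proposition~\ref{Prop2.m=n} requires a simplicial tree, while $\mathcal{N}(\T)$ is only stated to be a simplicial forest. This is harmless, because the proofs of Lemma~\ref{lem.tree} and Proposition~\ref{Prop2.m=n} only use that every subcollection has a leaf, which is exactly the forest property. Alternatively, one can argue componentwise on connected components; I would just remark that the proposition applies verbatim. This gives $M=N$, so the set of facets is determined. The factorization is then unique up to the order of the factors, which is the intended meaning.

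The only point needing care is this tree-versus-forest hypothesis; everything else is immediate. No minimality of $u$ is really needed for uniqueness. Indeed, the same argument shows that distinct $k$-matchings give distinct monomials that are incomparable under divisibility. If ${\bf x}_{V_M}$ divides ${\bf x}_{V_N}$, then $V_M\subseteq V_N$, and the proposition forces $M=N$. Hence the generators ${\bf x}_{V_M}$ are exactly the minimal generators of $NI(\T)^{[k]}$.
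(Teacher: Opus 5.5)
Your proof is correct and follows the paper's route. The paper presents the corollary as an immediate consequence of Proposition~\ref{Prop2.m=n} applied with $V_M=V_N$, and your existence paragraph only makes explicit that the squarefree products of $k$ facet monomials are exactly the ${\bf x}_{V_M}$ with $M$ a $k$-matching. Your tree-versus-forest caveat is also right: $\mathcal{N}(\T)$ can be disconnected (for the path $1,2,3,4$ its facets are $\{1,2\}$ and $\{3,4\}$), and the proofs of Lemma~\ref{lem.tree} and Proposition~\ref{Prop2.m=n} use only that every subcollection has a leaf.
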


We recall a useful result from \cite{KNQ} related to the matchings of a simplicial tree. 

\begin{theorem} \cite[Lemma 3.5]{KNQ} \label{thm.intersection}
     Let $\Delta$ be a simplicial tree. Further, let $M =\{F_1,\ldots, F_k\}$ and $N = \{G_1,\ldots,G_k\}$ be two $k$-matchings of $\Delta$. Then there exist $i, j \in \{1,\ldots, k\}$ such that $F_i \cap G_s = \emptyset$ for all  $s \neq  j$.
\end{theorem}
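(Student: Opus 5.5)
The plan is to reduce the statement to a counting argument on the bipartite graph of Construction~\ref{cons:bipartite}, using that this graph is acyclic by Lemma~\ref{lem.tree}. Throughout, write $\hat{M}=\{F_1,\ldots,F_{\hat{k}}\}$ and $\hat{N}=\{G_1,\ldots,G_{\hat{k}}\}$ for the facets remaining after the common ones are removed, reindexing $M$ and $N$ so that these come first.

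First I dispose of the degenerate case $M=N$, that is, $\hat{k}=0$. Take any facet $F_i\in M$ and let $j$ be such that $G_j=F_i$. Since $N$ is a matching, $F_i=G_j$ is disjoint from every $G_s$ with $s\neq j$, and we are done. (The same argument works whenever $M$ and $N$ share a facet, but I will not need it separately.)

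Now assume $\hat{k}\geq 1$ and consider $B_{(M,N)}$. It has $2\hat{k}$ vertices and is acyclic by Lemma~\ref{lem.tree}, so it has at most $2\hat{k}-1$ edges. Every edge of $B_{(M,N)}$ has exactly one endpoint in $\hat{M}$, so the sum of the degrees of $F_1,\ldots,F_{\hat{k}}$ equals the number of edges. This sum is therefore at most $2\hat{k}-1<2\hat{k}$, and hence some $F_i\in\hat{M}$ has degree at most $1$ in $B_{(M,N)}$. This is the same counting idea as in the proof of Proposition~\ref{Prop2.m=n}.

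It remains to check that this $F_i$ also avoids the facets of $N\setminus\hat{N}$. Those facets coincide with facets of $M\setminus\hat{M}$, and since $M$ is a matching they are disjoint from $F_i$. So, among all facets of $N$, $F_i$ can meet at most the one $G_j$ adjacent to it in $B_{(M,N)}$. If $F_i$ has degree $0$, choose $j$ arbitrarily. In either case $F_i\cap G_s=\emptyset$ for all $s\neq j$.

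I do not expect a genuine obstacle, since the substance is already contained in Lemma~\ref{lem.tree}. The only points requiring care are the bookkeeping between $M$ and $\hat{M}$, so that the indices in the statement refer to the full matchings, and the degree-$0$ case.
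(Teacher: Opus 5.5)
Your proof is correct. Note that the paper gives no proof of this statement: it quotes it verbatim from \cite[Lemma 3.5]{KNQ}, so there is no in-paper argument to match.

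What you do instead is derive it internally as a corollary of Lemma~\ref{lem.tree}, using the same edge count as in Proposition~\ref{Prop2.m=n}. Since $B_{(M,N)}$ is a forest on $2\hat k$ vertices with $\hat k$ on each side, the degrees on the $\hat M$ side sum to at most $2\hat k-1$. This forces some $F_i\in\hat M$ of degree at most $1$. The remaining facets $N\setminus\hat N=M\setminus\hat M$ are disjoint from $F_i$ because $M$ is a matching. The trivial case $M=N$, which Lemma~\ref{lem.tree} excludes, you handle separately.

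There is no circularity. Lemma~\ref{lem.tree} is proved directly from the leaf condition: a leaf on a chordless cycle of $B_{(M,N)}$ would meet two disjoint facets of the other matching in disjoint nonempty sets, and no single facet can contain both. It does not use the cited result.

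Compared with the citation, your route makes the paper self-contained and gives slightly more for free. The facet you find lies in $\hat M$, so $F_i\neq G_j$ automatically, which is the first refinement stated in Remark~\ref{Rem.intersection}. When $k=\nu$, degree $0$ is impossible, since $N\cup\{F_i\}$ would then be a $(\nu+1)$-matching; this gives the second refinement.

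The counting step is what places the low-degree facet on the $M$ side. The leaves of $\langle \hat M\cup\hat N\rangle$ are exactly the vertices of degree at most $1$ in $B_{(M,N)}$, but a leaf found directly could lie in $\hat N$. It is the balance $|\hat M|=|\hat N|$ together with acyclicity that forces one on the $\hat M$ side.
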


\begin{remark} \label{Rem.intersection}
    Moreover, by removing the common facets from $M$ and $N$ in Theorem \ref{thm.intersection}, one may conclude that  there exist $i, j \in \{1,\ldots, k\}$ such that $F_i\neq G_j$ and $F_i \cap G_s = \emptyset$ for all  $s \neq  j$.  Moreover, if $k$ is the matching number of $\Delta$, then  $F_i\cap G_j\neq\emptyset$.
\end{remark}

The following lemmas describe combinatorial properties of two $\nu$-matchings of $\cN(\T)$, and they play a key role in the proof of the Theorem~\ref{thm.3}. In particular, we compare the numbers of facets contained in the components of $\T\setminus\{u\}$. 

\begin{lemma} \label{lem.car}
Let $\T$ be a tree and let $u$ be a vertex of $\T$. Let $\T_1,\ldots,\T_d$ be the connected components of the graph $\T\setminus\{u\}$. Suppose that $M$ and $N$ are two $\nu$-matchings of $\mathcal{N}(\T)$. Then, for each $i=1,\ldots,d$, one has
\[
|M_{\T_i}|-1 \;\leq\; |N_{\T_i}| \;\leq\; |M_{\T_i}|+1.
\]
\end{lemma}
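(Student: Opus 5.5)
The idea is an exchange argument: if $N$ puts too few facets inside some component $\T_i$ compared with $M$, then $N$ can be enlarged, which contradicts the maximality of $\nu$. By symmetry it suffices to show $|N_{\T_i}| \geq |M_{\T_i}|-1$.

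First I classify the facets of $\cN(\T)$ relative to $u$. Every facet is contained in a closed neighborhood $N_\T[w]$. Since $\T_i$ is a connected component of $\T\setminus\{u\}$, such a facet either contains $u$, or lies entirely inside $V(\T_j)$ for a single $j$. In a matching, at most one facet contains $u$. Hence
\[
\nu = |M| = \sum_{j=1}^d |M_{\T_j}| + \epsilon_M, \qquad \epsilon_M\in\{0,1\},
\]
where $\epsilon_M=1$ exactly when some facet of $M$ contains $u$. The same decomposition holds for $N$ with $\epsilon_N\in\{0,1\}$.

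Next, suppose for contradiction that $|N_{\T_i}| \leq |M_{\T_i}|-2$ for some $i$. Define
\[
N' = M_{\T_i} \cup \bigcup_{j\neq i} N_{\T_j}.
\]
Facets inside different components are disjoint, so $N'$ is a matching of $\cN(\T)$. Its size is $|M_{\T_i}| + \nu - \epsilon_N - |N_{\T_i}| \geq \nu + 2 - \epsilon_N \geq \nu+1$. This contradicts the definition of $\nu$, so $|N_{\T_i}|\geq |M_{\T_i}|-1$. Exchanging the roles of $M$ and $N$ gives the upper bound $|N_{\T_i}|\leq |M_{\T_i}|+1$.

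The only point requiring care is the claim that a facet not containing $u$ lies inside a single component. I would prove it as follows. A facet $F\subseteq N_\T[w]$ with $u\notin F$ has all its vertices in $\{w\}\cup N_\T(w)\setminus\{u\}$. If $w\neq u$, all these vertices are adjacent to $w$ in $\T\setminus\{u\}$, so they lie in the component of $w$. If $w=u$, then $F\subseteq N_\T(u)$, and it could a priori meet several components. To handle this, I note that the proof only needs $N'$ to be a matching, i.e. pairwise disjoint facets. Facets assigned to different components have disjoint vertex sets precisely when each facet's vertex set lies in its assigned component. So I would define $M_{\T_j}$ as in the paper, as facets contained in $V(\T_j)$, and separately bound the facets of $M$ that are contained in no single $V(\T_j)$.

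Any facet that is not inside a single $V(\T_j)$ must contain $u$, or be a subset of $N(u)$ spanning several components. In either case it meets the set $N_\T[u]$ in at least two vertices, and in particular contains a neighbor of $u$ or $u$ itself. I would bound the number of such facets in a matching by $1$, and then rerun the counting above with this bound. If this bound fails, the fallback is to restrict attention to facets meeting $\T_i$ and treat the extra facets as the "$\epsilon$" term, since the count only needs the excess to be $\leq 1$.
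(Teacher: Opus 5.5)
Your proof is correct and follows essentially the paper's argument: the same exchange matching $M_{\T_i}\cup\bigcup_{j\neq i}N_{\T_j}$, with at most one facet of $N$ not contained in a single $V(\T_j)$. Your $\epsilon_N$ bookkeeping is actually stated more accurately than the paper's phrase that $\N_\T[u]$ is the only facet meeting several components. The worry in your last paragraph is moot: every facet of $\cN(\T)$ is a closed neighborhood $\N_\T[w]$, so $w\in F$, and the case $w=u$ forces $u\in F$.
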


\begin{proof}
By symmetry, it suffices to prove the inequalities for $i=1$. First, suppose that $a=|N_{\T_1}|<|M_{\T_1}|-1$ . The set $M_{\T_1} \,\cup\, \bigcup_{i=2}^d N_{\T_i}$ forms a matching of $\mathcal{N}(\T)$, since all its facets are contained in
pairwise disjoint components of $\T\setminus\{u\}$. Moreover, $\N_{\T}[u]$ is the unique facet of $\mathcal{N}(\T)$ that intersects
more than one component of $\T\setminus\{u\}$. Hence, among the facets of $N$,
at most one intersects more than one component, and therefore $\sum_{i=2}^d |N_{\T_i}|\geq \nu-(a+1).$ It follows that
\[
\bigl|M_{\T_1} \cup \bigcup_{i=2}^d N_{\T_i}\bigr|
= |M_{\T_1}| + \nu - a - 1 > \nu,
\]
which contradicts the fact that $\nu$ is the matching number of
$\mathcal{N}(\T)$. Therefore, $|N_{\T_1}|\ge |M_{\T_1}|-1.$ Similarly, by interchanging the roles of $M$ and $N$, we obtain $|M_{\T_1}|\ge |N_{\T_1}|-1$ which gives $|M_{\T_1}|+1\ge |N_{\T_1}|.$
\end{proof}

\begin{lemma}\label{lemma1.redu}
Under the assumptions of Lemma~\ref{lem.car}, additionally suppose that
$\N_{\T}[u]\in M$. Then the following hold.
\begin{enumerate}

\item[{\em(i)}]\label{rem.c11}
If $\N_{\T}[u]$ does not intersect any facet of $N_{\T_i}$ for some $i$, then $|M_{\T_i}|=|N_{\T_i}|$.
\item[{\em(ii)}]\label{rem.c2}
If $\N_{\T}[u]\notin N$ and $u\in V_N$, then either $|M_{\T_i}|=|N_{\T_i}|$ for all $i$, or there exist distinct indices $i,j$ such that $|N_{\T_i}|=|M_{\T_i}|+1$, $|N_{\T_j}|=|M_{\T_j}|-1$, and
$|N_{\T_k}|=|M_{\T_k}|$ for all $k\neq i,j$.

\item[{\em(iii)}]\label{rem.c3}
If $u\notin V_N$, then there exists an index $i$ such that $N_{\T_i}$ contains a facet that contains a neighbor of $u$, and
\[
|N_{\T_i}|=|M_{\T_i}|+1,
\]
while $|N_{\T_j}|=|M_{\T_j}|$ for all $j\neq i$.
\end{enumerate}
\end{lemma}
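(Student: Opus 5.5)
The whole argument is bookkeeping of the counts $m_i=|M_{\T_i}|$ and $n_i=|N_{\T_i}|$, using the inequality of Lemma~\ref{lem.car} together with two structural facts. First, since $\N_{\T}[u]\in M$ and $M$ is a matching, every facet of $M$ other than $\N_{\T}[u]$ avoids $u$. The only facet of $\cN(\T)$ meeting more than one component of $\T\setminus\{u\}$ is $\N_{\T}[u]$, and every other facet containing $u$ has the form $\N_{\T}[w]$ with $w$ a neighbor of $u$. Hence a facet not containing $u$ lies inside a single $\T_i$, which gives $\sum_i m_i=\nu-1$. Second, if $u\notin V_N$ then every facet of $N$ lies in some $\T_i$ and $\sum_i n_i=\nu$. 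If $u\in V_N$ via a facet $\N_{\T}[w]\neq \N_{\T}[u]$ with $w\in V(\T_{i_0})$, then $\sum_i n_i=\nu-1$, and $\N_{\T}[w]\setminus\{u\}\subseteq V(\T_{i_0})$. The key exchange step is this: whenever a subcollection of $M_{\T_i}$ or $N_{\T_i}$ can be swapped with the other one without creating overlaps, maximality of $\nu$ forces $m_i$ and $n_i$ to satisfy inequalities in both directions.

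For (i), suppose $\N_{\T}[u]$ meets no facet of $N_{\T_i}$. Then $N_{\T_i}\cup (M\setminus M_{\T_i})$ is a matching. The facets of $N_{\T_i}$ lie in $\T_i$ and avoid $\N_{\T}[u]$ by hypothesis, and they avoid all $M_{\T_j}$ for $j\ne i$ since they lie in different components. Its size is $\nu-m_i+n_i$, so $n_i\le m_i$. Conversely, $M_{\T_i}\cup (N\setminus N_{\T_i})$ is a matching, because $M_{\T_i}$ avoids $u$ and lies in $\T_i$, and every facet of $N\setminus N_{\T_i}$ lies outside $V(\T_i)$, except possibly $\N_{\T}[w]$ with $w\in\T_i$. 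I expect this exceptional facet to be the main obstacle. If $w\in V(\T_i)$ and $\N_{\T}[w]\in N$, I would instead add $\N_{\T}[u]$ to $M_{\T_i}\cup(N\setminus N_{\T_i}\setminus\{\N_{\T}[w]\})$, checking that $\N_{\T}[u]$ avoids the remaining $N$-facets, since those avoid $u$ and $\N_{\T}[w]\ni u$ was removed. The hypothesis only controls $N_{\T_i}$, so I would argue directly that facets in other components touching $N(u)$ obstruct nothing once $\N_{\T}[w]$ is removed. This exchange gives $m_i\ge n_i$ (or $m_i+1\ge n_i+1$), and hence $m_i=n_i$.

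For (ii) and (iii), I combine the sums with Lemma~\ref{lem.car}. In (ii), $u\in V_N$ through $\N_{\T}[w]\ne \N_{\T}[u]$, so $\sum n_i=\sum m_i=\nu-1$ and every difference $n_i-m_i$ lies in $\{-1,0,1\}$. By (i), every component $\T_k$ whose $N$-facets avoid $\N_{\T}[u]$ has $n_k=m_k$. I would show at most two components have nonzero difference, and I would try an exchange argument: if $n_i=m_i+1$ for two indices $i$, then replacing $M_{\T_i}\cup M_{\T_{i'}}\cup\{\N_{\T}[u]\}$ by $N_{\T_i}\cup N_{\T_{i'}}$ would produce a matching of size $\nu+1$, provided one of these does not use $\N_{\T}[w]$. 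The sum then forces exactly one $+1$ and one $-1$. In (iii), $\sum n_i=\nu=\sum m_i+1$ with differences in $\{-1,0,1\}$. The same exchange (now without any $\N_{\T}[w]$) rules out two $+1$'s, and a $-1$ would force two $+1$'s, so exactly one index $i$ has $n_i=m_i+1$ and the rest are equal. Finally, by (i) the component $\T_i$ must contain an $N$-facet meeting $\N_{\T}[u]$. Since that facet avoids $u$, it contains a neighbor of $u$.
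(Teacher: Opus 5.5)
\paragraph{The gap is in part (i).} It sits exactly in the case you flagged: $\N_{\T}[w]\in N$, where $w$ is the (unique) neighbor of $u$ in $\T_i$.

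Your repair does not work. After deleting $\N_{\T}[w]$, the remaining $N$-facets in another component $\T_j$ may still contain the neighbor of $u$ in $\T_j$. Adding $\N_{\T}[u]$ then creates an overlap, and nothing in the hypothesis of (i) controls those components.

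In fact no repair is possible, because (i) is false in this case. Take $\T$ to be the path on $1,\ldots,9$ with edges $\{i,i+1\}$.
\begin{itemize}
\item The facets of $\cN(\T)$ are $\{1,2\},\{2,3,4\},\{3,4,5\},\{4,5,6\},\{5,6,7\},\{6,7,8\},\{8,9\}$.
\item $\nu=3$, since four disjoint facets would need at least $2+2+3+3=10$ vertices.
\item Let $u=4$, so $V(\T_1)=\{1,2,3\}$ and $V(\T_2)=\{5,\ldots,9\}$.
\item Let $M=\{\{1,2\},\{3,4,5\},\{8,9\}\}$ and $N=\{\{2,3,4\},\{5,6,7\},\{8,9\}\}$.
\end{itemize}
Then $N_{\T_1}=\emptyset$, so the hypothesis of (i) holds for $i=1$, yet $|M_{\T_1}|=1\neq 0=|N_{\T_1}|$. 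This is precisely the $(-1,+1)$ alternative allowed by (ii). Your exchange also visibly fails here, since $\{3,4,5\}\cap\{5,6,7\}\neq\emptyset$.

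The paper's own proof has the same hole. Its assertion that $(N\setminus N_{\T_i})\cup M_{\T_i}$ is a matching fails in this example, because $\{1,2\}\cap\{2,3,4\}\neq\emptyset$.

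\paragraph{What your argument does prove.} Your first exchange always gives $|N_{\T_i}|\le|M_{\T_i}|$. Equality holds under the extra hypothesis that $N$ does not contain $\N_{\T}[w]$ for the neighbor $w$ of $u$ in $\T_i$. One small fix is needed here: your sentence that every facet of $N\setminus N_{\T_i}$ lies outside $V(\T_i)$ overlooks the case $\N_{\T}[u]\in N$. That facet is still disjoint from $M_{\T_i}$, because $M_{\T_i}$ avoids the neighbor of $u$ in $\T_i$, so the second exchange survives.

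With (i) restricted this way, your proofs of (ii) and (iii) are correct, and somewhat cleaner than the paper's.
\begin{itemize}
\item Replacing $\{\N_{\T}[u]\}\cup M_{\T_i}\cup M_{\T_{i'}}$ by $N_{\T_i}\cup N_{\T_{i'}}$ always produces a $(\nu+1)$-matching. This rules out two indices with $|N_{\T_i}|=|M_{\T_i}|+1$.
\item Your proviso about $\N_{\T}[w]$ is vacuous, since a facet containing $u$ never lies in any $N_{\T_i}$.
\item The two sum identities together with Lemma~\ref{lem.car} then finish by counting. The paper instead derives one-sided inequalities component by component.
\item In (iii), the final step only needs the always-valid half of (i), since $u\notin V_N$.
\end{itemize}

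You should, however, drop the remark in (ii) that (i) applies to every component whose $N$-facets avoid $\N_{\T}[u]$. The example above refutes it for $\T_1$.
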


\begin{proof}
Since $\N_{\T}[u]\in M$ and every other facet of $M$ belongs to exactly one
$M_{\T_i}$, we have
\begin{equation}\label{eq=M}
M=\{\N_{\T}[u]\}\sqcup M_{\T_1}\sqcup\cdots\sqcup M_{\T_d},
\qquad
\nu=1+|M_{\T_1}|+\cdots+|M_{\T_d}|.
\end{equation}

\noindent
(i) 
Fix $i$. If no facet of $N_{\T_i}$ intersects $\N_{\T}[u]$, then
$(M\setminus M_{\T_i})\cup N_{\T_i}$ is a matching of $\mathcal{N}(\T)$, hence it has
cardinality at most $\nu$. Since $|M\setminus M_{\T_i}|=\nu-|M_{\T_i}|$, this gives
$|N_{\T_i}|\le |M_{\T_i}|$. Similarly, $(N\setminus N_{\T_i})\cup M_{\T_i}$ is a matching,
so $|M_{\T_i}|\le |N_{\T_i}|$. Therefore $|M_{\T_i}|=|N_{\T_i}|$.

\medskip
\noindent
(ii) If $\N_{\T}[u]\notin N$ and $u\in V_N$, then there exists a vertex $v$ adjacent
to $u$ such that $\N_{\T}[v]\in N$, and
\begin{equation}\label{eq=N}
N=\{\N_{\T}[v]\}\sqcup N_{\T_1}\sqcup\cdots\sqcup N_{\T_d},
\qquad
\nu=1+|N_{\T_1}|+\cdots+|N_{\T_d}|.
\end{equation}
We may assume that $v\in V(\T_1)$. For each $i=2,\ldots,d$, the facets in
$M_{\T_i}$ are disjoint from $\N_{\T}[v]$. Hence, if $|N_{\T_i}|=|M_{\T_i}|-1$ for some $i\neq1$, then replacing the facets of $N_{\T_i}$ in $N$ by those of $M_{\T_i}$ produces a matching of $\mathcal{N}(\T)$ of size $\nu+1$, a contradiction. On the other hand, if
$|N_{\T_1}|=|M_{\T_1}|+1$, then replacing the facets of $M_{\T_1}$ in $M$ by those of $N_{\T_1}$ yields a matching of size $\nu+1$, since all facets of $N_{\T_1}$ are disjoint from $\N_{\T}[u]$, again a contradiction. Therefore, by Lemma~\ref{lem.car},
\[
|M_{\T_1}|-1\le |N_{\T_1}|\le |M_{\T_1}|,
\qquad
|M_{\T_i}|\le |N_{\T_i}|\le |M_{\T_i}|+1
\ \text{for } i=2,\ldots,d.
\]
Using (\ref{eq=M}) and (\ref{eq=N}), this implies that either $|N_{\T_i}|=|M_{\T_i}|$ for all $i$, or  we have $|N_{\T_1}|=|M_{\T_1}|-1$, $|N_{\T_j}|=|M_{\T_j}|+1$ for some $j>1$, and $|N_{\T_k}|=|M_{\T_k}|$ for all $k\neq 1,j$. In addition, it follows from (i) that $N_{\T_j}$ intersects $\N_{\T}[u]$.

\medskip
\noindent
(iii) If $u\notin V_N$, then
\begin{equation}\label{eq=N1}
N=N_{\T_1}\sqcup\cdots\sqcup N_{\T_d},
\qquad
\nu=|N_{\T_1}|+\cdots+|N_{\T_d}|.
\end{equation}

Suppose that $|N_{\T_i}|=|M_{\T_i}|-1$ for some $i$. Replacing the facets of $N_{\T_i}$ in $N$ by those of $M_{\T_i}$ produces a matching of $\mathcal{N}(\T)$ of size $\nu+1$, again a contradiction. Therefore, by
Lemma~\ref{lem.car}, we have $|M_{\T_i}|\le |N_{\T_i}|\le |M_{\T_i}|+1$ for all $i$. Finally, using (\ref{eq=M}) and (\ref{eq=N1}), it follows that there exists exactly one index $i$ such that $|N_{\T_i}|=|M_{\T_i}|+1$, and from (i) we have that $N_{\T_i}$ intersects $\N_{\T}[u]$.
\end{proof}

\begin{lemma}\label{lemma.redu}
Let $\T$ be a tree and let $M$ and $N$ be two $\nu$-matchings of $\mathcal{N}(\T)$. Let $F\in M$ and suppose that
\[
|\{G\in N : F\cap G \neq \emptyset\}|=s >2.
\]
Then there exists a $\nu$-matching $W$ of
$\mathcal{N}(\T)$ different from $M$ and $N$ such that $W\subset M\cup N$ with
\[
|\{H\in W : F\cap H \neq \emptyset\}|= 2.
\]
\end{lemma}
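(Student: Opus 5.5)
The plan is to work entirely inside the bipartite graph $B_{(M,N)}$ of Construction~\ref{cons:bipartite}, which is a forest by Lemma~\ref{lem.tree}, and to obtain $W$ by exchanging $N$-facets for $M$-facets along branches of this forest.

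First, $F\notin N$. Otherwise $F$ would intersect only itself among the facets of $N$, contradicting $s>2$. Hence $F\in\hat M$, and its neighbours in $B_{(M,N)}$ are exactly the $s$ facets $G_1,\ldots,G_s\in N$ meeting $F$. A facet common to $M$ and $N$ is disjoint from every other facet of $M$ and of $N$, so it meets no facet of $\hat M\cup\hat N$; in particular it is not among the $G_j$.

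Let $C$ be the connected component of $B_{(M,N)}$ containing $F$. Since $C$ is a tree, deleting $F$ splits it into $s$ subtrees $C_1,\ldots,C_s$ with $G_j\in C_j$. Write $m_j$ and $n_j$ for the numbers of $M$-facets and $N$-facets in $C_j$. The key structural fact is that a facet of $M$ in $C_j$ and a facet of $N$ outside $C_j\cup\{F\}$ never intersect, because any intersection would be an edge of $B_{(M,N)}$ leaving $C_j$ other than the edge $\{G_j,F\}$.

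We now use the maximality of $\nu$ twice.
\begin{enumerate}
\item[(a)] $C$ is balanced, i.e.\ $\sum_j n_j=1+\sum_j m_j$. If $C$ had more $N$-facets than $M$-facets, replacing the $M$-facets of $C$ in $M$ by the $N$-facets of $C$ would give a matching larger than $\nu$, since no edge of $B_{(M,N)}$ leaves $C$. The symmetric swap rules out more $M$-facets.
\item[(b)] $n_j\ge m_j$ for every $j$. The set $(N\setminus (N\cap C_j))\cup (M\cap C_j)$ is a matching by the key fact, and it has size $\nu-n_j+m_j$, so $m_j\le n_j$.
\end{enumerate}
Combining (a) and (b), there is exactly one index $j_1$ with $n_{j_1}=m_{j_1}+1$, and $n_j=m_j$ for all $j\neq j_1$.

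Now choose any $j_2\neq j_1$ and define $W$ as the union of:
\begin{itemize}
\item the common facets $M\cap N$;
\item the $M$-facets of all components of $B_{(M,N)}$ other than $C$;
\item the $N$-facets of $C_{j_1}$ and of $C_{j_2}$;
\item the $M$-facets of $C_j$ for all $j\neq j_1,j_2$.
\end{itemize}
In particular $F\notin W$.

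To see that $W$ is pairwise disjoint:
\begin{itemize}
\item Facets inside a single branch come wholly from $M$ or wholly from $N$, so they are disjoint.
\item Facets from different branches, or from different components, are disjoint because there is no edge of $B_{(M,N)}$ between them.
\item The common facets are disjoint from everything else.
\end{itemize}
Its size is the number of common facets, plus the $M$-count of the other components, plus
\[
n_{j_1}+n_{j_2}+\sum_{j\ne j_1,j_2}m_j=(m_{j_1}+1)+m_{j_2}+\sum_{j\neq j_1,j_2} m_j ,
\]
which equals the number of $M$-facets in $C$. Hence $|W|=\nu$, and clearly $W\subset M\cup N$.

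The facets of $W$ meeting $F$ are exactly its neighbours in $B_{(M,N)}$ that lie in $W$. The $M$-facets in $W$ are disjoint from $F$, and the only $N$-facets of $C_{j_1}\cup C_{j_2}$ adjacent to $F$ are $G_{j_1}$ and $G_{j_2}$. So exactly two facets of $W$ meet $F$. Finally, $W\neq M$ because $F\notin W$. Also $W\neq N$: since $s>2$ there is a third branch $C_{j_3}$, where $G_{j_3}\in N$ has been dropped.

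The step I expect to need the most care is the key fact that intersections between $M$-facets in $C_j$ and $N$-facets elsewhere are exactly the edges of $B_{(M,N)}$. This depends on the common facets having been removed in Construction~\ref{cons:bipartite}, and on the tree structure of $C$ given by Lemma~\ref{lem.tree}.
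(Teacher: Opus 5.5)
Your proof is correct, and it takes a genuinely different route from the paper's.

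The paper writes $F=\N_{\T}[u]$ and cuts the tree $\T$ itself at $u$, into the components $\T_1,\ldots,\T_d$ of $\T\setminus\{u\}$. It then uses Lemma~\ref{lemma1.redu} to compare $|M_{\T_i}|$ with $|N_{\T_i}|$, and takes $W=N_{\T_1}\sqcup N_{\T_2}\sqcup M_{\T_3}\sqcup\cdots\sqcup M_{\T_d}$.

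You instead cut the exchange forest $B_{(M,N)}$ at the node $F$. Your branch counts $m_j,n_j$ play the role of $|M_{\T_i}|,|N_{\T_i}|$. Your steps (a) and (b) recover the needed balance directly from the maximality of $\nu$ and Lemma~\ref{lem.tree}, where the paper reads it off from Lemma~\ref{lemma1.redu}.

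What this buys you:
\begin{itemize}
\item The argument never uses that facets are closed neighbourhoods, so it holds verbatim for any simplicial tree.
\item It needs no case distinction. In particular it handles $u\in V_N$ automatically. There the facet $\N_{\T}[v]\in N$ containing $u$ lies in none of the $\T_i$, and the paper's $W$ omits it. Under the paper's own normalization ($|N_{\T_i}|=|M_{\T_i}|$ for $i\ge 3$), that $W$ then has only $\nu-1$ facets, by the count in Lemma~\ref{lemma1.redu}(ii), so the construction must be adjusted. For you, $\N_{\T}[v]$ is simply one of the $G_j$.
\end{itemize}

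The paper's version, in turn, stays within the component bookkeeping of Lemma~\ref{lemma1.redu}, which it reuses throughout Section~\ref{sec.CL}.
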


\begin{proof}
Let $F=\N_{\T}[u]\in M$. Then $\deg_{\T}(u)=d\ge s$. Since $s> 3$, we have 
$\N_{\T}[u]\notin N$. Let $\T_1,\ldots,\T_d$ be the connected components of 
$\T\setminus\{u\}$.

By Lemma~\ref{lemma1.redu}(i)--(iii), the inequality  $|N_{\T_i}|\neq |M_{\T_i}|$ can occur for at most two indices $i$, and only for those $i$ such that $N_{\T_i}$ contains a facet 
intersecting $F$. Since $s>2$, there exist at least two indices $i\neq j$ such that 
$N_{\T_i}$ and $N_{\T_j}$ each contain a facet intersecting $F$. 
After relabeling, we may assume that $F$ intersects a facet in each of $N_{\T_1}$ and $N_{\T_2}$ and that 
$|N_{\T_i}|=|M_{\T_i}|$ for all $i=3,\ldots,d$.

Define $$W=N_{\T_1}\sqcup N_{\T_2}\sqcup M_{\T_3}\sqcup \cdots\sqcup M_{\T_d}.$$ Since all facets involved lie in pairwise disjoint components of $\T\setminus\{u\}$, the set $W$ is a $\nu$-matching of $\mathcal{N}(\T)$. Moreover, $F$ does not intersect any facet of $M_{\T_3}, \ldots, M_{\T_d}$, and hence $|\{H\in W : F\cap H\neq\emptyset\}|=2$ as required.
\end{proof}

\section{Componentwise linearity of highest non-vanishing squarefree power for trees} \label{sec.CL}

In this section, we establish the necessary and  sufficient condition for the componentwise linearity of highest non-vanishing squarefree power of the closed neighborhood ideal of a tree.

\vspace{2mm}

We introduce the following two conditions on a tree $\T$:

\begin{enumerate}[label=\bf{(C\arabic*)}, ref=C\arabic*]
\item \label{C1}
$\T$ does not contain a path 
$r_1, r, s, t, t_1$
with $\deg_{\T}(s) \ge 3$ such that the two distinct $\nu$-matchings
\[
\{\N_{\T}[r]\}\cup Y 
\quad \text{and} \quad 
\{\N_{\T}[t]\}\cup Y
\]
exist, where $Y$ is a $(\nu-1)$-matching of $\cN(\T)$.

\item \label{C2}
$\T$ does not contain a path $p_1=r,p_2,\ldots,p_{3n-1}=s$, for $n\ge 1$,
with $\deg_{\T}(r)\ge 3$ and $\deg_{\T}(s)\ge 3$, such that the sets
\[
\{\N_{\T}[p_1], \N_{\T}[p_4], \ldots, \N_{\T}[p_{3n-2}]\}\cup Y
\]
and
\[
\{\N_{\T}[p_2], \N_{\T}[p_5], \ldots, \N_{\T}[p_{3n-1}]\}\cup Y
\]
each form a $\nu$-matching of $\cN(\T)$,
where $Y$ is a $(\nu-n)$-matching of $\cN(\T)$.
\end{enumerate}

\vspace{2mm}

The following theorems show that Conditions~\eqref{C1} and \eqref{C2} are necessary for the $\nu$-th squarefree power of the closed neighborhood ideal of a tree to be componentwise linear.

\begin{theorem} \label{thm.1}
Let $\T$ be a tree that does not satisfy Condition~\eqref{C1}, and let $J=NI(\T)$. Then $J^{[\nu]}$ is not componentwise linear.
\end{theorem}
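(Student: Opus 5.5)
The plan is to work with the negation of Condition~\eqref{C1} directly. Suppose $\T$ contains a path $r_1,r,s,t,t_1$ with $\deg_{\T}(s)\ge 3$ and a $(\nu-1)$-matching $Y$ such that $M=\{\N_{\T}[r]\}\cup Y$ and $N=\{\N_{\T}[t]\}\cup Y$ are $\nu$-matchings of $\cN(\T)$.

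Put $u={\bf x}_{\N_{\T}[r]}{\bf x}_Y$ and $v={\bf x}_{\N_{\T}[t]}{\bf x}_Y$. Both lie in $\G(J^{[\nu]})$ by Corollary~\ref{cor.unique}. Since $\T$ is a tree, $\N_{\T}[r]\cap\N_{\T}[t]=\{s\}$. Moreover $\{r_1,r\}\subseteq \N_{\T}[r]\setminus\N_{\T}[t]$ and $\{t,t_1\}\subseteq \N_{\T}[t]\setminus \N_{\T}[r]$.

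By symmetry assume $\deg u\le \deg v=:d$, and set $I=(J^{[\nu]})_{\langle d\rangle}$. If $J^{[\nu]}$ were componentwise linear, $I$ would have a linear resolution and in particular be linearly related. I will contradict this using Theorem~\ref{Rem.LP}.

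The construction is as follows.
\begin{itemize}
\item Pad $u$ to $u'=u\cdot{\bf x}_A$, where $A\subseteq \N_{\T}[t]\setminus(\N_{\T}[r]\cup\{t,t_1\})$ and $|A|=|\N_{\T}[t]|-|\N_{\T}[r]|$. Such an $A$ exists because $|\N_{\T}[r]|\ge 3$, as $r$ has the neighbours $r_1$ and $s$. If $Y$ meets these vertices, the count has to be adjusted, but $Y$ is disjoint from $\N_{\T}[r]\cup\N_{\T}[t]$, so this is harmless.
\item Then $u',v\in \G(I)$, and $L:=\lcm(u',v)$ is divisible by ${\bf x}_Y{\bf x}_{\N_{\T}[r]\cup \N_{\T}[t]}$.
\end{itemize}

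The key claim is that every $\nu$-matching $W$ with $V_W\subseteq V_Y\cup \N_{\T}[r]\cup\N_{\T}[t]$ equals $M$ or $N$. Granting this, every generator $w$ of $I$ dividing $L$ is a degree-$d$ multiple of $u$ or of $v$.
\begin{itemize}
\item A multiple of $v$ of degree $d=\deg v$ is $v$ itself.
\item Any other vertex $w$ of $G^{(u',v)}_I$ is a multiple of $u$, so it contains $x_rx_{r_1}$, which does not divide $v$. Hence $\deg\lcm(w,v)\ge d+2$.
\end{itemize}
So $v$ is isolated in $G^{(u',v)}_I$, and there is no path from $u'$ to $v$. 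Theorem~\ref{Rem.LP} then says $I$ is not linearly related, so $J^{[\nu]}$ is not componentwise linear.

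The main obstacle is the key claim, and this is where $\deg_{\T}(s)\ge 3$ must enter. I would argue via $B_{(W,M)}$, using Lemma~\ref{lem.tree}, in the style of Proposition~\ref{Prop2.m=n}. A facet $\N_{\T}[x]$ contained in $V_Y\cup\N_{\T}[r]\cup\N_{\T}[t]$ but not in $V_Y$ must have its centre $x$ in $\N_{\T}[r]\cup\N_{\T}[t]$.
\begin{itemize}
\item $x=s$ is excluded, since $s$ has a third neighbour outside $\N_{\T}[r]\cup\N_{\T}[t]$, and that neighbour is not in $V_Y$ either, because $Y$ avoids $\N_{\T}[s]$.
\item If $x$ is another neighbour of $r$ or $t$, its closed neighbourhood contains $r$ or $t$ together with more vertices of $V_Y$.
\end{itemize}
In the latter case, the acyclicity of $B_{(W,M)}$ combined with the vertex count forces $W=M$ or $W=N$ (in the Proposition~\ref{Prop2.m=n} style, where $\nu$-maximality bounds the number of facets). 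I expect this case analysis, in particular controlling facets centred at neighbours of $r$ other than $s$, to need the most care.
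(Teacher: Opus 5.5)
\paragraph{The key claim is false.} Your frame is right: pad to equal degree, show $u'$ and $v$ lie in different components of $G^{(u',v)}_I$, and apply Theorem~\ref{Rem.LP}. But your key claim is false, and so is its consequence that $v$ is isolated.

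Let $\T$ be the spider obtained by gluing the paths $s,R_1,\dots,R_7$, $s,T_1,\dots,T_7$ and $s,Q_1,Q_2$ at $s$. Take $r=R_1$, $r_1=R_2$, $t=T_1$, $t_1=T_2$ and
\[
Y=\{\N_\T[R_4],\N_\T[R_7],\N_\T[T_4],\N_\T[T_7],\N_\T[Q_2]\}.
\]
\begin{enumerate}
\item[(a)] Only three facets have size two, so a $7$-matching would need at least $18>17$ vertices. Hence $\nu=6$, and $M=\{\N_\T[r]\}\cup Y$ and $N=\{\N_\T[t]\}\cup Y$ witness the failure of \eqref{C1}.
\item[(b)] Since $|\N_\T[r]|=|\N_\T[t]|=3$, you get $u'=u$, $d=15$, and $L$ is the product of all $17$ variables.
\item[(c)] $W=(N\setminus\{\N_\T[R_4]\})\cup\{\N_\T[R_3]\}$ is a $6$-matching with $V_W=V(\T)\setminus\{R_1,R_5\}$. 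So ${\bf x}_{V_W}$ is a vertex of $G^{(u,v)}_I$, different from $u$ and $v$, with $\deg\lcm({\bf x}_{V_W},v)=d+1$.
\item[(d)] Symmetrically, $(M\setminus\{\N_\T[T_4]\})\cup\{\N_\T[T_3]\}$ gives a neighbour of $u$.
\end{enumerate}
So neither generator is isolated, whichever labelling you pick, and no refinement of the case analysis can rescue the claim. The theorem itself survives in this example: the three degree-$15$ generators whose matchings contain $\N_\T[r]$ form one component, and the three containing $\N_\T[t]$ form the other.

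\paragraph{Which steps of your sketch fail.} Three steps break.
\begin{enumerate}
\item[(1)] A facet meeting $\N_\T[r]\setminus\{s\}$ need not be centred in $\N_\T[r]\cup\N_\T[t]$. Here $\N_\T[R_3]$ is centred at a degree-two neighbour of $r_1$ and absorbs $r_1$ by trading against a facet of $Y$.
\item[(2)] The acyclicity count in $B_{(W,M)}$ or $B_{(W,N)}$ only yields at least $2k-1$ edges, which a tree allows, because one facet of $\hat W$ may leave $V_M$ (resp.\ $V_N$).
\item[(3)] $Y$ need not avoid $\N_\T[s]$. Here $Q_1\in V_Y$, so $\N_\T[s]\subseteq V_Y\cup\N_\T[r]\cup\N_\T[t]$. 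Excluding $\N_\T[s]$ needs the bipartite-graph argument instead: if the third neighbour $z$ of $s$ lies in $V_Y$, then $\N_\T[s]$ meets both $\N_\T[r]$ and the facet of $Y$ through $z$, which gives too many edges. This is where $\deg_\T(s)\ge 3$ is used.
\end{enumerate}

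\paragraph{What is needed instead.} You need an invariant for the whole component of $u$, which is what the paper proves.
\begin{enumerate}
\item[(i)] Cut $\T$ along the edge $\{s,t\}$ into $\T_1\ni s$ and $\T_2\ni t$. Every facet other than $\N_\T[s]$ and $\N_\T[t]$ lies in $\T_1$ or in $\T_2$.
\item[(ii)] Once $\N_\T[s]$ is excluded, take the matching $W$ of any vertex of $G^{(u,v)}_I$ with $\N_\T[t]\notin W$. Then $|W_{\T_1}|=|M_{\T_1}|$ by maximality of $\nu$, and $V_{W_{\T_1}}\subseteq V_{M_{\T_1}}$ because the monomial divides $L$. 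Proposition~\ref{Prop2.m=n} then gives $W_{\T_1}=M_{\T_1}\ni\N_\T[r]$.
\item[(iii)] Induct on the distance from $u$ in $G^{(u,v)}_I$, ruling out $\N_\T[t]\in W$ at each step. This shows every monomial connected to $u$ is divisible by ${\bf x}_{\N_\T[r]}$. So none of them is adjacent to $v$, which misses $r$ and $r_1$.
\end{enumerate}
This invariant lets the component of $u$ contain generators that differ from $u$ on the $\T_2$-side, exactly as in the example.

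The paper also pads both monomials, $u$ by $\N_\T[t]\setminus\{s,t,t_1\}$ and $v$ by $\N_\T[r]\setminus\{s,r,r_1\}$, so that $\deg\lcm(u,v)=\deg u+2$. With your one-sided padding the gap is $|\N_\T[r]|-1$. That can exceed $2$, which only adds cases to the induction step.
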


\begin{proof}
 Assume that $\T$ contains a path with edges $\{r_1,r\},\{r,s\},\{s,t\},\{t,t_1\}$ such that $\deg_{\T}(s) \ge 3$ and the monomials 
 \[
 {\bf x}_{\N_{\T}[r]}Y \quad \text{and } \quad {\bf x}_{\N_{\T}[t]}Y
 \]
 belong to $\mathcal{G}(J^{[\nu]})$, where $Y\in\mathcal{G}(J^{[\nu-1]})$. Thus $\T$ does not satisfy Condition~\eqref{C1}. Write $\N_{\T}[r]=\{r,s,r_1,r_2,\ldots,r_{d}\}$ and $\N_{\T}[t]=\{t,s,t_1,t_2,\ldots,t_{d'}\}$, where $d,d' \ge 1$. Set
\[
u={\bf x}_A ({\bf x}_{\N_{\T}[r]}Y )\quad \text{and} \quad v={\bf x}_B ({\bf x}_{\N_{\T}[t]}Y),
\]
where $A=\{t_{2},\dots,t_{d'}\}$ and $B=\{r_{2},\dots,r_{d}\}$, so that $\deg(u)=\deg(v)=a$. Let $I:=J^{[\nu]}$. Then $u,v \in I_{\langle a \rangle}$. By Theorem~\ref{Rem.LP}, it suffices to show that $u$ and $v$ are disconnected in the graph $G^{(u,v)}_{I_{\langle a \rangle}}$. Let $U=\{F,F_2 \ldots, F_\nu\}$ and $U'=\{G,F_2 \ldots, F_\nu\}$ be the $\nu$-matching corresponding to $u$ and $v$ respectively where $F=\N_{\T}[r]$, $G=\N_{\T}[t]$ and $\supp(Y)= \bigcup_{i=2}^{\nu}F_i$. By the construction of $u$ and $v$ , we have 
\[
\supp(v)\setminus \supp(u)=\{t,t_1\} \text{ and }  \supp(u)\setminus \supp(v)=\{r,r_1\},
\]
and therefore
\begin{equation}\label{eq=lcm}
\supp(\lcm(u,v))= \supp(u)\cup\{t,t_1\}=\supp(v)\cup\{r,r_1\}
= V_U \cup \N_{\T}[t] = V_{U'} \cup \N_{\T}[r]
\end{equation}

Let  $w\in G^{(u,v)}_{I_{\langle a \rangle}}$ and $W=\{H_1, \ldots, H_{\nu}\}$ be the $\nu$-matching corresponding to $w$. Removing the edge $\{s,t\}$ from $\T$ yields two connected components $\T_1$ and $\T_2$, where $\T_1$ contains $s$ and $\T_2$ contains $t$. Note that $\N_{\T}[s]$ and $\N_{\T}[t]$ are the only facets of $\mathcal{N}(\T)$ not entirely contained in either $V(\T_1)$ or $V(\T_2)$. Hence, \begin{equation}\label{eq}
\text{ if }\quad  \N_{\T}[s],\N_{\T}[t]\notin W, \quad\text{ then }\quad |U_{\T_1}|=|{W}_{\T_1}| \quad \text{and} \quad|U_{\T_2}|=|{W}_{\T_2}|.
\end{equation}
\noindent
\textit{Claim}. We have $\N_{\T}[s] \notin W$. 

\noindent {\it  Proof of claim: }  
    Suppose that $\N_{\T}[s] \in W$. Then $k=|U_{\T_1}|=|W_{\T_1}\cup\{\N_{\T}[s]\}|$, that is $U_{\T_1}$ and $W'=W_{\T_1}\cup\{\N_{\T}[s]\}$ gives a $k$-matching of $\T$. 
    
    We have $\N_{\T}[r]\in U_{\T_1}$. Following the Construction~\ref{cons:bipartite}, consider the bipartite graph $B_{(U_{\T_1},W_{\T_1})}$ on $2k'$ vertices where $2k'=2k-2t$ with $t=|U_{\T_1}\cap W_{\T_1}|$. Then $\{\N_{\T}[r],\N_{\T}[s]\}$ forms an edge in $B_{(U_{\T_1},W_{\T_1})}$ because $s \in \N_{\T}[r]\cap \N_{\T}[s]$. Due to our assumption $\deg_\T(s)\geq 3$, there exists $z \in \N_{\T}[s] \setminus \{r,s,t\}\subset V_W$. Since $ V_{W'} \subset V_{U_{\T_1}} \cup \{t\}$ due to (\ref{eq=lcm}), it follows that $z \in  V_{U_{\T_1}} $ and there exists a facet $F \in U_{\T_1}$ with $F\neq  \N_{\T_1}[r]$ such that $z \in F$. Hence, the degree of $\N_{\T_1}[s]$ in $B_{(U_{\T_1},W_{\T_1})}$ is at least $2$. Moreover, since $ V_{W_{\T_1}}\setminus\N_{\T_1}[s] \subset V_{U_{\T_1}}$, it follows from Remark \ref{rem=cons} that the degree of each vertex in partition set $W_{\T_1}$ of $B_{(U_{\T_1},W_{\T_1})}$  is also at least $2$. This shows that $B_{(U_{\T_1},W_{\T_1})}$ has more than $2k'$ edges, and it is not tree, a contradiction to Lemma \ref{lem.tree}. This proves our claim. 

Since $\deg(\lcm(u,v))=a+2$, it follows that  $\{u,v\}$ is not an edge in $G^{(u,v)}_{I_{\langle a \rangle}}$. In fact, given any $w \in G^{(u,v)}_{I_{\langle a \rangle}}$, if $\N_{\T}[r] \subset \supp(w)$, then $w$ is not adjacent to $v$. Then to prove that $u$ and $v$ are disconnected in $G^{(u,v)}_{I_{\langle a \rangle}}$, it is enough to prove that if a monomial $w \in G^{(u,v)}_{I_{\langle a \rangle}}$ is connected to $u$, then  $\N_{\T}[r] \in W$. We prove this by applying induction on $d(u,w)$, that is, the length of the shortest path in  $G^{(u,v)}_{I_{\langle a \rangle}}$ connecting $u$ and $v$. Note that the assertion trivially holds for $d(u,w)=0$ because in this case $u=w$. Now assume that assertion is true for all $d(u,w)\leq k-1$ where $k\geq 1$. Now, let $w \in G^{(u,v)}_{I_{\langle a \rangle}}$ be such that $d(u,w)= k$.

Since $w \in G^{(u,v)}_{I_{\langle a \rangle}}$, we have that  $\deg(\lcm(u,w))\leq a+2$. First consider the case when $\deg(\lcm(u,w))= a+1$. Then $u$ and $w$ differ in exactly one variable, that is, there exists some $x \in \supp(u)$ such that $w=(u/x)y$ where $y \in \{t,t_1\}$, due to (\ref{eq=lcm}). This shows that $\N_{\T}[t] \not\subset \supp(w)$ because $t$ or $t_1$ is missing from $\supp(w)$.  In particular, $\N_{\T}[t] \notin W$, and from (\ref{eq}) it follows that $|U_{\T_1}|=|{W}_{\T_1}|$. On the other hand, we have $V_{W_{\T_1}}\subset V_{U_{\T_1}}$ due to (\ref{eq=lcm}). Then by Proposition~\ref{Prop2.m=n}, we conclude that $U_{\T_1}={W}_{\T_1}$, and hence $\N_{\T_1}[r] \in W$ and $\N_{\T_1}[r] \subset \supp(w)$, as required. This also shows that $x$ is a vertex of $\T_2$.

Now, let $\deg(\lcm(u,w))= a+2$. Then $u$ and $w$ differ in exactly two variables, that is, there exists some $x,z \in \supp(u)$ such that $w=(u/xz)tt_1$ , due to (\ref{eq=lcm}). Since $w$ is connected to $u$, there exists a monomial $w' \in G^{(u,v)}_{I_{\langle a \rangle}}$ such that $w$ is adjacent to $w'$ and $d(u,w')=k-1$, and by the induction hypothesis $\N_{\T_1}[r]$ is contained in the matching corresponding to $w'$, and $w'=(u/x')y$, where $y\in \{t,t_1\}$ and $x'$ is in $\T_2$.

If $\N_{\T}[t]\notin W$, then again, as argued above, we obtain the desired conclusion. To complete the proof, it only remains to rule out the case $\N_{\T}[t]\in W$. Assume that $\N_{\T}[t]\in W$. If both $x$ and $z$ belong to $\T_1$, then $\deg(\lcm(v,w'))=a+2$ and $w$ cannot be adjacent to $w'$. So, at least one of the vertex, say $x$ belongs to $T_2$. Since $\N_{\T}[t]\in U'$, we obtain $|U'_{\T_2}|=|W_{\T_2}|$. Since $W_{\T_2}\subseteq U'_{\T_2}$ due to (\ref{eq=lcm}), by Proposition~\ref{Prop2.m=n}, we conclude that $U'_{\T_2}={W}_{\T_2}$. But $x\in V_{U'_{\T_2}}$, we obtain a contradiction to $x\notin \supp(w)$. 
\end{proof}

\begin{theorem} \label{thm.2}
    Let $\T$ be a tree that does not satisfy Condition \eqref{C2}, and let $J=NI(\T)$.
    Then $J^{[\nu]}$ is not componentwise linear.
\end{theorem}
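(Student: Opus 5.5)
We follow the strategy of Theorem~\ref{thm.1}: exhibit two generators of some fixed-degree component $I_{\langle a\rangle}$ of $I=J^{[\nu]}$ that are disconnected in $G^{(u,v)}_{I_{\langle a\rangle}}$. By Theorem~\ref{Rem.LP}, $I_{\langle a\rangle}$ is then not linearly related, so $J^{[\nu]}$ is not componentwise linear.

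\emph{Setup.} Assume $\T$ contains a path $p_1=r,\dots,p_{3n-1}=s$ with $\deg_\T(r),\deg_\T(s)\ge 3$ and with $\nu$-matchings $U=\{\N_\T[p_1],\N_\T[p_4],\dots,\N_\T[p_{3n-2}]\}\cup Y$ and $U'=\{\N_\T[p_2],\dots,\N_\T[p_{3n-1}]\}\cup Y$. The interior closed neighborhoods $\N_\T[p_{3i+1}]$ and $\N_\T[p_{3i+2}]$ contain the path vertices $p_{3i},\dots,p_{3i+3}$ together with possible pendant branches. Let $u_0={\bf x}_{V_U}$ and $v_0={\bf x}_{V_{U'}}$. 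The symmetric difference of $V_U$ and $V_{U'}$ consists of:
\begin{itemize}
\item the extra neighbors of $r=p_1$, namely $N_\T(r)\setminus\{p_2\}$, which lie in $V_U$ only;
\item the extra neighbors of $s$, which lie in $V_{U'}$ only;
\item side branches at interior vertices.
\end{itemize}
Pad both monomials to a common degree $a$, as in Theorem~\ref{thm.1}: set $u={\bf x}_A u_0$ and $v={\bf x}_B v_0$, where $A$ and $B$ are chosen so that
\[
\supp(u)\setminus\supp(v)=\{r_1\} \quad\text{and}\quad \supp(v)\setminus\supp(u)=\{s_1\},
\]
with $r_1\in N_\T(r)\setminus\{p_2\}$ and $s_1\in N_\T(s)\setminus\{p_{3n-2}\}$. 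If the interior padding makes this exact equality awkward, allow a symmetric difference of size two on each side instead, as in Theorem~\ref{thm.1}. In either case $\lcm(u,v)$ has degree at least $a+1$ with no direct edge, or degree exactly $a+1$. In the latter case the two generators would be adjacent, so I expect to need $|\supp(u)\setminus\supp(v)|\ge 2$. The main design choice is therefore to keep two witness vertices near $r$ (using $\deg_\T(r)\ge3$: take two neighbors $r_1,r_2$ of $r$ off the path, one of which is left out by padding) and similarly near $s$.

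\emph{Invariant.} Every $w\in G^{(u,v)}_{I_{\langle a\rangle}}$ with matching $W$ satisfies $V_W\subseteq\supp(\lcm(u,v))$. Cut $\T$ at the edges $\{p_1,p_2\}$ and $\{p_{3n-2},p_{3n-1}\}$ (more generally, along the path) and compare component counts. By Lemma~\ref{lem.car} and the counting arguments of Lemma~\ref{lemma1.redu}, together with Proposition~\ref{Prop2.m=n} and Corollary~\ref{cor.unique}, the portion of $W$ away from the path must coincide with $Y$. Along the path, $W$ must use either the ``$U$-pattern'' or the ``$U'$-pattern''. A mixed pattern would force some $\N_\T[p_j]$ whose extra neighbor (this is where $\deg\ge3$ at the ends is used) is not in $\supp(\lcm(u,v))$. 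Alternatively, a mixed pattern produces a bipartite graph $B$ with too many edges, contradicting Lemma~\ref{lem.tree}, exactly as in the Claim of Theorem~\ref{thm.1}.

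\emph{Induction and main obstacle.} By induction on $d(u,w)$, as in Theorem~\ref{thm.1}, show that every $w$ connected to $u$ contains $\N_\T[p_1]$ in its matching, hence cannot reach $v$. The inductive step splits on whether $\lcm(u,w)$ has degree $a+1$ or $a+2$, using the cut components and Proposition~\ref{Prop2.m=n} to pin $W_{\T_1}=U_{\T_1}$ on the component containing $r$. The main obstacle is the Claim-type step ruling out intermediate matchings that contain a shifted block $\N_\T[p_{3i}]$ or a mix of the two patterns in the middle of the path. For $n=1$ this is nearly identical to Theorem~\ref{thm.1}. For general $n$ I would handle it by showing that any $\nu$-matching inside $\supp(\lcm(u,v))$ restricted to the path region has vertex set contained in $V_U$ or $V_{U'}$, and then invoking Proposition~\ref{Prop2.m=n}.
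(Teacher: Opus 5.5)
Your plan has the same skeleton as the paper's proof: degree-$a$ generators $u,v$ whose supports differ by $\{r_2,r_3\}$ versus $\{s_2,s_3\}$, then an induction on $d(u,w)$ showing that $\N_\T[r]$ stays in the matching of every $w$ reachable from $u$. There is a genuine gap, however: you skip the reduction that makes this work, and without it your pair $u,v$ can actually be connected.

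\emph{The missing reduction.} The paper first takes $P$ to be a \emph{shortest} violating path. Suppose some $p_i$ with $2\le i\le 3n-2$ and $i\not\equiv 0\pmod 3$ had degree at least $3$. Then the sub-path from $p_i$ to one endpoint, with the leftover facets of $U$ or $U'$ absorbed into $Y$, would be a shorter violation. So for a shortest $P$ all such $p_i$ have degree $2$. This gives $\supp(\lcm(u,v))=V_U\cup\N_\T[s]=V_{U'}\cup\N_\T[r]$, and that identity supplies the inclusions to which Proposition~\ref{Prop2.m=n} is applied.

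\emph{A counterexample to your construction without it.} Padding interior side branches, as you propose, can fail. Take the path $p_1,\dots,p_5$ and attach a pendant path $x,x',x''$ (with $x''$ a leaf) as follows: at $p_1$ for $x=r_2,r_3$; at $p_5$ for $x=s_2,s_3$; at $p_2$ for $x=q$; and at $p_4$ for $x=z$. Then $\nu=8$, $Y$ consists of the six facets $\{x',x''\}$, and the only $\nu$-matchings are $U=Y\cup\{\N_\T[p_1],\N_\T[p_4]\}$, $U'=Y\cup\{\N_\T[p_2],\N_\T[p_5]\}$ and $W=Y\cup\{\N_\T[p_1],\N_\T[p_5]\}$. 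So $P$ violates \eqref{C2} with $n=2$. Your construction gives $L=\supp(\lcm(u,v))=V(\T)$, with $L\setminus V_U=\{q,s_2,s_3\}$, $L\setminus V_{U'}=\{r_2,r_3,z\}$ and $L\setminus V_W=\{p_3,q,z\}$. Write $\{\alpha,\beta\}$ for $\mathbf{x}_{L\setminus\{\alpha,\beta\}}$. Then $\{s_2,s_3\},\{s_2,q\},\{q,z\},\{z,r_2\},\{r_2,r_3\}$ is a path from $u$ to $v$ in $G^{(u,v)}_{I_{\langle a\rangle}}$. So your $u$ and $v$ are connected, even though the tree does fail \eqref{C2} via the shorter path $p_1,p_2$.

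\emph{Your handling of the main obstacle is also wrong, even after the reduction.} A $\nu$-matching supported in $\supp(\lcm(u,v))$ need not follow the $U$- or the $U'$-pattern. For $1\le k\le n-1$, the set $W_k=\{\N_\T[p_1],\dots,\N_\T[p_{3k-2}],\N_\T[p_{3k+2}],\dots,\N_\T[p_{3n-1}]\}\cup Y$ is a genuine $\nu$-matching. Its vertex set contains $\N_\T[r]\cup\N_\T[s]$, so it lies in neither $V_U$ nor $V_{U'}$. No support argument excludes it, and neither does Lemma~\ref{lem.tree}. What excludes it is the count $|V_{W_k}|=|\supp(\lcm(u,v))|-1=a+1$, and that count again relies on the interior vertices having degree $2$. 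In the example above, $W=W_1$ has only $a-1$ vertices and is exactly the bridge.

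\emph{What the paper does instead.} It never classifies these matchings.
\begin{itemize}
\item It cuts $\T$ at $s$ and shows, via Lemma~\ref{lem.tree}, that $\N_\T[s_i]\notin W$ for the off-path neighbours $s_i$ of $s$.
\item In the $a+1$ case, $s_2$ or $s_3$ is missing from $\supp(w)$, so $\N_\T[s]\notin W$. With $\T_1$ the component of $\T\setminus\{s\}$ containing $r$, the component count gives $|W_{\T_1\cup\{s\}}|=|U_{\T_1\cup\{s\}}|$ and $V_{W_{\T_1\cup\{s\}}}\subseteq V_{U_{\T_1\cup\{s\}}}$. Proposition~\ref{Prop2.m=n} then forces $\N_\T[r]\in W$.
\item In the $a+2$ case, it rules out $\N_\T[s]\in W$ using the predecessor $w'$.
\end{itemize}
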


\begin{proof}
 Let $r$ and $s$ be two vertices of $\T$ with $\deg(r)\geq \deg(s) \geq 3$. Assume that $P:p_1, \ldots, p_{3n-1}$ is a path on $3n-1$ vertices connecting $r$ and $s$, that is, $p_1=r$ and $p_{3n-1}=s$. Furthermore, assume that there exist monomials $u'={\bf x}_{\N_{\T}[p_1]}{\bf x}_{\N_{\T}[p_4]}\cdots {\bf x}_{\N_{\T}[p_{3n-2}]}{Y}$ and  $v'={\bf x}_{\N_{\T}[p_2]}{\bf x}_{\N_{\T}[p_5]}\cdots {\bf x}_{\N_{\T}[p_{3n-1}]}{Y}$ belong to $\G(J^{[\nu]})$, where $Y \in J^{[\nu -n]}$. Thus $\T$ does not satisfy Condition~\eqref{C2}.
 
 Let $P$ be a shortest path that satisfies the above condition. First, we claim that if $n\geq 2$ then one has $\deg_\T(p_i)=2$ for all $i \equiv 1,2\ \pmod{3}$ with $ i\in \{2,\dots,3n-2\}\}$. Suppose, to the contrary, that $\deg_\T(p_{i})\geq 3$ for some $i \in \{2,\ldots, 3n-2\}$ with $i \equiv 2 \pmod 3$. In this case, we have 
 \[u_1={\bf x}_{\N_{\T}[p_1]}{\bf x}_{\N_{\T}[p_4]}\cdots {\bf x}_{\N_{\T}[p_{i-1}]}{Z} \text{ and }
 v_1={\bf x}_{\N_{\T}[p_2]}{\bf x}_{\N_{\T}[p_5]}\cdots {\bf x}_{\N_{\T}[p_{i}]}{Z}\]
 belongs to $\G(J^{[\nu]})$, where $Z\in J^{[\nu - m]}$. This contradicts our assumption that $P$ is a shortest path that satisfies the hypothesis. Similarly, the case  $\deg_\T(p_{i})\geq 3$ for some $i \in \{2,\ldots, 3n-2\}$ with $i \equiv 1 \pmod 3$ is not possible. Hence, the claim follows.
    
Let $\N_{\T}[r]=\{r,r_1,\ldots,r_{d}\}$ and $\N_{\T}[s]=\{s,s_1,\ldots,s_{d'}\}$. If $n=1$, set $r_1=s$ and $s_1=r$; if $n\geq 2$, choose $r_1,s_1 \in V_P$, that is, $r_1=p_1$ and $s_1=p_{3n-2}$. Set
\[
u={\bf x}_A ({\bf x}_{\N_{\T}[r]}Y )\quad \text{and} \quad v={\bf x}_B ({\bf x}_{\N_{\T}[t]}Y),
\]
where $A=\{s_{4},\dots,s_{d'}\}$ and $B=\{r_{4},\dots,r_{d}\}$, so that $\deg(u)=\deg(v)=a$. Let $I:=J^{[\nu]}$. Then $u,v \in I_{\langle a \rangle}$. By Theorem~\ref{Rem.LP}, it suffices to show that $u$ and $v$ are disconnected in the graph $G^{(u,v)}_{I_{\langle a \rangle}}$. 

Let $U$ and $U'$ be the $\nu$-matching corresponding to $u$ and $v$. By the construction of $u$ and $v$, we have $
\supp(v)\setminus \supp(u)=\{s_2,s_3\} \text{ and }  \supp(u)\setminus \supp(v)=\{r_2,r_3\}, $
and therefore
\begin{equation}\label{eq=lcm2}
\supp(\lcm(u,v))= \supp(u)\cup\{s_2,s_3\}=\supp(v)\cup\{r_2,r_3\}
= V_U \cup \N_{\T}[s] = V_{U'} \cup \N_{\T}[r].
\end{equation}

Let $\T_1, \T_2, \ldots, \T_{d'}$ be the conncted components of $\T\setminus \{s\}$ such that $s_i$ lies in $T_i$ for each $i=1, \ldots, d'$. Let $w\in G^{(u,v)}_{I_{\langle a \rangle}}$ and $W=\{H_1, \ldots, H_{\nu}\}$ be the $\nu$-matching corresponding to $w$. Since $U$ and $W$ are maximal matching of $\T$, we note that 
    \begin{equation}\label{eq=size}
\text{ if} \quad \N_{\T}[s], \N_{\T}[s_i] \notin W, \quad \text{then} \quad   |{U}_{T_1 \cup \{s\}}| = |{W}_{T_1 \cup \{s\}}| 
     \end{equation}   
\noindent{\it Claim:} $\N_{\T}[s_i] \notin W$ for all $i=2,4,5, \ldots, d'$.

 \noindent Proof of claim:     Assume that $\N_{\T}[s_i] \in W$ for $i=2$. The same argument applies for any other choice of $i$. Let $F=\N_{\T}[s_1]$ and $G=\N_{\T}[s_2]$.
  
  Let $M=U_{\T_1} \cup U_{\T_2}\cup \{F\}\subseteq U$ and $M'=W_{\T_1} \cup W_{\T_2}\cup \{G\}\subseteq W$. Then $|M|=|M'|$ because $U$ and $W$ are maximal matching of $\T$.  Following the Construction~\ref{cons:bipartite}, consider the bipartite graph $B_{(M,M')}$ on $2k'$ vertices where $2k'=2|M|-2k$ with $k=|M\cap M'|$. Then $\{F,G\}$ forms an edge in $ B_{(M,M')}$, since $s \in F \cap G$. Since $G \in M'$, there exists $z \in G \setminus \{s,s_2\}\subset V_N$. Since $ V_{M'} \setminus A\subset V_{M} \cup \{s_2\}$, it follows that $z \in V_M$ and there exists a facet $F' \in M$ with $F'\neq F$ such that $z \in F'$. Hence, the degree of $G$ in $B_{(M,M')}$ is at least $2$. Moreover, since $V_{M'} \setminus \{G\} \subseteq V_{M}$, it follows from Remark \ref{rem=cons} that in $B_{(M,M')}$ the degree of each $G' \in M'$ is also at least $2$. This shows that $B_{(M,M')}$ has more than $2k'$ edges, and it is not tree, a contradiction to Lemma \ref{lem.tree}. This proves our claim.

We proceed as in the proof of Theorem~\ref{thm.1}. Since $\deg(\lcm(u,v))=a+2$, it follows that  $\{u,v\}$ is not an edge in $G^{(u,v)}_{I_{\langle a \rangle}}$. In fact, given any $w \in G^{(u,v)}_{I_{\langle a \rangle}}$, if $\N_{\T}[r] \subset \supp(w)$, then $w$ is not adjacent to $v$. Then to prove that $u$ and $v$ are disconnected in $G^{(u,v)}_{I_{\langle a \rangle}}$, it is enough to prove that if a monomial $w \in G^{(u,v)}_{I_{\langle a \rangle}}$ is connected to $u$, then  $\N_{\T}[r]\subset W$, the matching corresponding to $w$. We prove this by applying induction on $d(u,w)$. Note that the assertion trivially holds for $d(u,w)=0$ because in this case $u=w$. Now assume that assertion is true for all $d(u,w)\leq k-1$ where $k\geq 1$. Now, let $w \in G^{(u,v)}_{I_{\langle a \rangle}}$ be such that $d(u,w)= k$.

  Since $w \in G^{(u,v)}_{I_{\langle a \rangle}}$, we have that $\deg(\lcm(u,w))\leq a+2$. First, consider the case when $\deg(\lcm(u,w))= a+1$. Then, there exists some $x \in \supp(u)$ such that $w=(u/x)y$ where $y \in \{s_2,s_3\}$, due to (\ref{eq=lcm2}). This shows that $\N_{\T}[s] \not\subset \supp(w)$ because $s_2$ or $s_3$ is missing from $\supp(w)$.  In particular, $\N_{\T}[s] \notin W$, and from (\ref{eq=size}) it follows that $|U_{\T_1 \cup\{s\}}|=|{W}_{\T_1 \cup\{s\}}|$. On the other hand, we have $V_{W_{\T_1}}\subset V_{U_{\T_1}}$ due to (\ref{eq=lcm2}). Then by Proposition~\ref{Prop2.m=n}, we conclude that $U_{\T_1 \cup\{s\}}={W}_{\T_1 \cup\{s\}}$, and hence $\N_{\T_1}[r] \in W$, as required. This also shows that $x$ is a vertex of $\T_2$.

Now, let $\deg(\lcm(u,w))= a+2$. Then $u$ and $w$ differ in exactly two variables, that is, there exists some $x,z \in \supp(u)$ such that $w=(u/xz)s_2s_3$, due to (\ref{eq=lcm2}). Since $w$ is connected to $u$, there exists a monomial $w' \in G^{(u,v)}_{I_{\langle a \rangle}}$ such that $w$ is adjacent to $w'$ and $d(u,w')=k-1$, and by the induction hypothesis $\N_{\T_1}[r]$ is contained in the matching corresponding to $w'$, and $w'=(u/x')y$, where $y\in \{s_2,s_3\}$ and $x'$ is in $\T_2$.

If $\N_{\T}[s]\notin W$, then again, as argued above, we obtain the desired conclusion. To complete the proof, it only remains to rule out the case $\N_{\T}[s]\in W$. Assume that $\N_{\T}[s]\in W$. Since $\N_{\T}[s]\in U'$, we obtain $|U'_{\T_2}|=|W_{\T_2}|$. We also have $W_{\T_2}\subseteq U'_{\T_2}$ due to (\ref{eq=lcm2}), by Proposition~\ref{Prop2.m=n}, we conclude that $U'_{\T_2}={W}_{\T_2}$. This shows that both $x$ and $z$ belong to $\T_1$. This gives $\deg(\lcm(v,w'))=a+2$ and $w$ cannot be adjacent to $w'$,  a contradiction.
\end{proof}

Next, we show that (\ref{C1}) and (\ref{C2}) are necessary for componentwise linearity of  highest non-vanishing squarefree power of $NI(\T)$. In fact, we show that if $\T$ satisfies (\ref{C1}) and (\ref{C2}), then $NI(\T)^{[\nu]}$ admits linear quotient. To do this, we now define two total orders on the $\nu$-matchings of $\cN(\T)$, namely $>_{\lex}$ and $>_\ell$.

\begin{notation}\label{Notation.facet}
Let $\T$ be a tree and let $x$ be a pendent vertex of $\T$. We view $\T$ as a rooted tree with root $x$. For a vertex $u\in V(\T)$, we define $\level(u)$ to be the distance from $x$ to $u$. We assign the following notation to the facets of $\mathcal{N}(\T)$. For a vertex $u\in V(\T)$,  we write $F_{i,j,k}=N_{\T}[u]$ if $|N_{\T}[u]|=i$, $\level(u)=j$, and $u$ is the $k$-th vertex in a fixed ordering of the vertices at level $j$ (see Remark~\ref{rem:order}).

We define a total order on $\mathcal{F}(\mathcal{N}(\T))$ by setting $F_{i,j,k} > F_{i',j',k'}$ if one of the following holds:
\begin{enumerate}[label=(\roman*)]
\item $i<i'$;
\item $i=i'$ and $j<j'$;
\item $i=i'$, $j=j'$, and $k<k'$.
\end{enumerate}

Let $ M=\{F_{i_1,j_1,k_1},\ldots,F_{i_\nu,j_\nu,k_\nu}\}$ and $N=\{F_{i'_1,j'_1,k'_1},\ldots,F_{i'_\nu,j'_\nu,k'_\nu}\}$ be two $\nu$-matchings of $\mathcal{N}(\T)$, written in decreasing order with respect to the above facet order. Let
$s=\min\{\ell : F_{i_\ell,j_\ell,k_\ell}\neq F_{i'_\ell,j'_\ell,k'_\ell}\}.$ We define a lexicographic order on $\nu$-matchings by setting $M>_{\mathrm{lex}}N$ if $F_{i_s,j_s,k_s} > F_{i'_s,j'_s,k'_s}.$

By Corollary~\ref{cor.unique}, each generator of $NI(\T)^{[\nu]}$ corresponds uniquely to a $\nu$-matching of $\mathcal{N}(\T)$, and this induces a total order $>_{\mathrm{lex}}$ on the minimal generating set $\mathcal{G}(NI(\T)^{[\nu]})$.
\end{notation}
\begin{remark}\label{rem:order}
The index $k$ is used only to impose a total order on facets corresponding to vertices with the same value of $\level(u)$ and with neighborhoods of the same size; any fixed choice of $k$ suffices.
\end{remark}

\begin{example}\label{ex.facetorder}
Let $\T$ be the tree on $[14]$ shown in Figure~\ref{fig.G}, rooted at the pendent vertex $1$. Using Notation~\ref{Notation.facet}, we label the facets of the closed neighborhood complex $\mathcal{N}(\T)$ as follows:
\[
\begin{array}{lll}
F_{2,0,1}=N_G[1]=\{1,2\},    \quad  &
F_{2,6,1}=N_G[14]=\{13,14\}, \quad &
F_{2,10,1}=N_G[11]=\{10,11\}, \\[2pt]
F_{3,2,1}=N_G[3]=\{2,3,4\},   &
F_{3,4,1}=N_G[12]=\{4,12,13\},&
F_{3,4,2}=N_G[5]=\{4,5,6\},   \\[2pt]
F_{3,5,2}=N_G[6]=\{5,6,7\},   &
F_{3,6,2}=N_G[7]=\{6,7,8\},   &
F_{3,7,1}=N_G[8]=\{7,8,9\},   \\[2pt]
F_{3,8,1}=N_G[9]=\{8,9,10\},  &
F_{4,3,1}=N_G[4]=\{3,4,5,12\}.&
\end{array}
\]
According to the total order on facets defined in Notation~\ref{Notation.facet}, we obtain
\[
F_{2,0,1} > F_{2,6,1} > F_{2,10,1} > F_{3,2,1} > F_{3,4,1} > F_{3,4,2}
> F_{3,5,2} > F_{3,6,2} > F_{3,7,1} > F_{3,8,1} > F_{4,3,1}.
\]

The $5$-matchings of $\mathcal{N}(\T)$ are
\[
M_1=\{F_{2,0,1},F_{2,6,1},F_{2,10,1},F_{3,4,2},F_{3,7,1}\},
\]
\[
M_2=\{F_{2,0,1},F_{2,6,1},F_{2,10,1},F_{3,6,2},F_{4,3,1}\},
\]
\[
M_3=\{F_{2,0,1},F_{2,6,1},F_{2,10,1},F_{3,7,1},F_{4,3,1}\}.
\]
With respect to the lexicographic order on $\nu$-matchings introduced in Notation~\ref{Notation.facet}, we have
\[
M_1 >_{\mathrm{lex}} M_2 >_{\mathrm{lex}} M_3.
\]
\end{example}

\begin{figure}[ht]
    \centering
    
\tikzset{every picture/.style={line width=0.75pt}} 

\begin{tikzpicture}[x=0.5pt,y=0.5pt,yscale=-1,xscale=1]
\draw    (172.03,1719.03) -- (203.03,1700.27) ;
\draw    (203.03,1700.27) -- (234.03,1681.5) ;
\draw    (234.03,1681.5) -- (265.03,1662.73) ;
\draw    (265.03,1662.73) -- (296.03,1643.97) ;
\draw    (296.03,1643.97) -- (327.03,1625.2) ;
\draw    (327.03,1625.2) -- (358.03,1606.43) ;
\draw    (358.03,1606.43) -- (389.03,1587.67) ;
\draw    (389.03,1587.67) -- (420.03,1568.9) ;
\draw    (420.03,1568.9) -- (451.03,1550.13) ;
\draw    (141.03,1737.8) -- (172.03,1719.03) ;
\draw    (358.03,1606.43) -- (383.03,1634.27) ;
\draw    (383.03,1634.27) -- (408.03,1662.1) ;
\draw    (408.03,1662.1) --(433.03,1689.93) ;

\draw (431.03,1535.13) node [anchor=north west][inner sep=0.75pt]  [font=\footnotesize]  {$1$};
\draw (405.03,1552.9) node [anchor=north west][inner sep=0.75pt]  [font=\footnotesize]  {$2$};
\draw (376.03,1568.67) node [anchor=north west][inner sep=0.75pt]  [font=\footnotesize]  {$3$};
\draw (344.03,1588.43) node [anchor=north west][inner sep=0.75pt]  [font=\footnotesize]  {$4$};
\draw (155.03,1697.03) node [anchor=north west][inner sep=0.75pt]  [font=\footnotesize]  {$10$};
\draw (191.03,1682.27) node [anchor=north west][inner sep=0.75pt]  [font=\footnotesize]  {$9$};
\draw (222.03,1663.5) node [anchor=north west][inner sep=0.75pt]  [font=\footnotesize]  {$8$};
\draw (252.03,1647.73) node [anchor=north west][inner sep=0.75pt]  [font=\footnotesize]  {$7$};
\draw (280.03,1627.97) node [anchor=north west][inner sep=0.75pt]  [font=\footnotesize]  {$6$};
\draw (309.03,1608.2) node [anchor=north west][inner sep=0.75pt]  [font=\footnotesize]  {$5$};
\draw (121.03,1717.8) node [anchor=north west][inner sep=0.75pt]  [font=\footnotesize]  {$11$};
\draw (362,1635) node [anchor=north west][inner sep=0.75pt]  [font=\footnotesize]  {$12$};
\draw (385,1661) node [anchor=north west][inner sep=0.75pt]  [font=\footnotesize]  {$13$};
\draw (415,1695) node [anchor=north west][inner sep=0.75pt]  [font=\footnotesize]  {$14$};

\filldraw[black] (172.03,1719.03)  circle (1.5pt) ;
\filldraw[black] (203.03,1700.27) circle (1.5pt) ;
\filldraw[black] (234.03,1681.5) circle (1.5pt) ;
\filldraw[black] (265.03,1662.73)  circle (1.5pt) ;
\filldraw[black] (296.03,1643.97) circle (1.5pt) ;
\filldraw[black] (327.03,1625.2) circle (1.5pt) ;\filldraw[black] (358.03,1606.43)  circle (1.5pt) ;
\filldraw[black] (389.03,1587.67) circle (1.5pt) ;
\filldraw[black] (420.03,1568.9)  circle (1.5pt) ;
\filldraw[black] (451.03,1550.13) circle (1.5pt) ;
\filldraw[black] (141.03,1737.8) circle (1.5pt) ;\filldraw[black] (383.03,1634.27)  circle (1.5pt) ;
\filldraw[black] (408.03,1662.1) circle (1.5pt) ;
\filldraw[black] (433.03,1689.93) circle (1.5pt) ;

\end{tikzpicture}    \caption{The tree $\T$.} \label{fig.G}
\end{figure}

Let $U$ be the unique $\nu$-matching which is maximal with respect to $>_{\mathrm{lex}}$ (Notation~\ref{Notation.facet}). For any $\nu$-matching $M$ of $\cN(\T)$, we define the level of $M$ with respect to $U$, denoted by $\level_{U}(M)$, by 
$$\level_{U}(M) :=\nu - |U\cap M|.$$
Furthermore, for a $\nu$-matching $M$, we denote by 
\[
\beta(M):=\bigl|\{F\in M :\; |F|=2\}\bigr|
\]
the number of facets of $M$ of cardinality two.

\begin{notation}\label{notation.total}
We define a total order $>_\ell$ on the set of $\nu$-matchings of $\mathcal{N}(\T)$ as follows. For two $\nu$-matchings $M$ and $N$, we set $M>_\ell N$ if one of the following holds:
\begin{enumerate}[label=(\roman*)]
\item $\beta(M)>\beta(N)$;
\item $\beta(M)=\beta(N)$ and $\level_{U}(M)<\level_{U}(N)$;
\item $\beta(M)=\beta(N)$, $\level_{U}(M)=\level_{U}(N)$, and $M>_{\mathrm{lex}}N$ (see Notation~\ref{Notation.facet}).
\end{enumerate}

Via the bijection between $\nu$-matchings of $\mathcal{N}(\T)$ and the minimal generators of $NI(\T)^{[\nu]}$ (Corollary~\ref{cor.unique}), this order induces a total order on $\mathcal{G}(NI(\T)^{[\nu]})$, which we also denote by $>_\ell$.
\end{notation}

\begin{example}
Let $\T$ be the tree from Example~\ref{ex.facetorder}, and let $M_1,M_2,M_3$ be the $\nu$-matchings of $\mathcal{N}(\T)$ described there.
Then $M_1$ is the maximal matching with respect to $>_{\lex}$ and we have $\level_{U}(M_3)=1 \quad \text{and} \quad \level_{U}(M_2)=2.$

With respect to the total order $>_\ell$ defined in Notation~\ref{notation.total}, we obtain
\[
M_1 >_\ell M_3 >_\ell M_2.
\]
On the other hand, with respect to the lexicographic order $>_{\mathrm{lex}}$ (see Example~\ref{ex.facetorder}), we have
\[
M_1 >_{\mathrm{lex}} M_2 >_{\mathrm{lex}} M_3.
\]
This example illustrates that the order $>_\ell$ refines the lexicographic order by incorporating both the number of size-two facets and the number of common facets with $U$.
\end{example}


In the subsequent results, we describe the structural properties of $\nu$-matchings of $\cN(\T)$ that satisfies $\eqref{C1}$ and $\eqref{C2}$.

\begin{remark}\label{rem=deg2}Let $\T$ be a tree satisfying (\ref{C1}). Let $M$ and $N$ be two $\nu$-matchings of $\cN(\T)$. Suppose that 
$\N_{\T}[r]\in M$ and $\N_{\T}[t]\in N$ satisfy 
$\N_{\T}[r]\cap \N_{\T}[t]=\{s\}$ and $\deg_\T(r), \deg_\T(t) \geq 2$. 
\begin{enumerate}
    \item  

Let $\T_1,\ldots,\T_d$ be the connected components of 
$\T\setminus\{r\}$, and assume that $t\in \T_1$. 
If $|N_{\T_1}|=|M_{\T_1}|+1$, then define
\[
Y = \bigl(M\setminus M_{\T_1}\bigr) 
    \,\cup\, 
    \bigl(N_{\T_1}\setminus\{\N_{\T}[t]\}\bigr).
\]
Then $Y$ is a $(\nu-1)$-matching of $\cN(\T)$, and both 
$\{\N_{\T}[r]\}\cup Y$ and 
$\{\N_{\T}[t]\}\cup Y$ 
are $\nu$-matchings of $\cN(\T)$. 
Hence, by (\ref{C1}), it follows that $\deg_{\T}(s)=2$.
\item If $\N_{\T}[r]$ does not intersect any other facet of $N$. Then $Y = (N\setminus \N_{\T}[t])$ is a $(\nu-1)$-matching of $\cN(\T)$, and both 
$\{\N_{\T}[r]\}\cup Y$ and 
$\{\N_{\T}[t]\}\cup Y=N$ 
are $\nu$-matchings of $\cN(\T)$. 
Hence, by (\ref{C1}), it follows that $\deg_{\T}(s)=2$.
\end{enumerate}
\end{remark}


\begin{proposition}\label{prop:4face}
Let $\T$ be a tree satisfying \eqref{C1} and \eqref{C2}. 
Let $U$ be the unique $\nu$-matching that is maximal with respect to $>_{\mathrm{lex}}$ (Notation~\ref{Notation.facet}). Then every facet $F\in U$ with $|F|\ge 4$ belongs to every $\nu$-matching $N$ of $\mathcal{N}(\T)$.
\end{proposition}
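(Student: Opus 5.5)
Let $F=\N_{\T}[u]\in U$ with $|F|\ge 4$, so $d=\deg_\T(u)\ge 3$. We argue by contradiction: suppose $N$ is a $\nu$-matching with $F\notin N$. Among all such $N$ we choose one that is closest to $U$, namely with $|N\cap U|$ maximal, and we use the structure of the bipartite graph $B_{(U,N)}$ from Construction~\ref{cons:bipartite}. This graph is a forest by Lemma~\ref{lem.tree}.

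Let $\T_1,\dots,\T_d$ be the components of $\T\setminus\{u\}$. Since $F\in U$, Lemma~\ref{lemma1.redu} applied with $M=U$ gives two cases.

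In the case $u\notin V_N$ (part (iii)), there is exactly one index $i$ with $|N_{\T_i}|=|U_{\T_i}|+1$, and $N_{\T_i}$ contains a facet $\N_{\T}[t]$ meeting $F$. We then form the matching $(U\setminus U_{\T_i}\setminus\{F\})\cup N_{\T_i}$. It has size $\nu$, and since $|F|\ge 4$ while every facet of $N_{\T_i}$ has size at least $2$, we compare it with $U$ in the facet order. The aim is to show that this matching is $>_{\mathrm{lex}} U$, or that the two matchings give a configuration forbidden by \eqref{C1}. A facet $\N_\T[t]$ with $t$ at distance $2$ from $u$ yields a path $r_1,u,s,t,t_1$ with $\deg_\T(u)\ge 3$ through a common $(\nu-1)$-matching $Y$. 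More generally, $t$ can be pushed to distance $3k-1$, which is the pattern in \eqref{C2}. Remark~\ref{rem=deg2} is the tool here: whenever two facets of $U$ and $N$ meet in a single vertex $s$ and the counts satisfy $|N_{\T_1}|=|U_{\T_1}|+1$, it forces $\deg_\T(s)=2$, which contradicts $s=u$ having degree $\ge 3$.

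In the case $u\in V_N$ but $F\notin N$ (part (ii)), some $\N_{\T}[v]\in N$ with $v\sim u$ lies on the side of $\T_1$. Either all component counts agree, in which case swapping gives a matching $W\subset U\cup N$ that still avoids $F$ but is closer to $U$, contradicting our choice of $N$. Or the counts shift by $\pm1$ between $\T_1$ and some $\T_j$. Lemma~\ref{lemma.redu} lets us assume $F$ meets at most two facets of $N$.

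We then follow the alternating path in $B_{(U,N)}$ starting from $F$. Because $B_{(U,N)}$ is a forest and the matchings are maximum, this path must end with facets whose unions agree. Walking along the tree, consecutive facets $\N_\T[p_1],\N_\T[p_2],\N_\T[p_4],\N_\T[p_5],\dots$ occur at shifts of one, and the path ends at a vertex of degree $\ge 3$ or at a leaf. If it ends at a vertex of degree $\ge3$, we get a forbidden \eqref{C2} path $p_1=u,\dots,p_{3n-1}$. If it ends at a leaf, the end facet has size $2$, so a swap replaces $F$ by a size-$2$ facet. This produces a matching with more size-$2$ facets, or one that is lexicographically larger than $U$ in the facet order of Notation~\ref{Notation.facet}, which contradicts the maximality of $U$.

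The main obstacle is this last step: controlling the alternating path and showing that its endpoints always produce either exactly the \eqref{C1}/\eqref{C2} configuration, with the correct residues modulo $3$ and the common $Y$, or a lex-larger matching. In particular, one must check that the swapped matching really beats $U$ under $>_{\mathrm{lex}}$, since that order first compares the largest facets by size and then by level. I would first try induction on the length of the alternating path, reducing at each step via Lemma~\ref{lemma1.redu}(i) to a single component.
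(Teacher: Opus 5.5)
There is a genuine gap, and it begins in the case $u\notin V_N$. There the facet $\N_{\T}[t]\in N_{\T_i}$ meeting $F$ does not contain $u$. So $t$ is at distance $2$ from $u$, and $F\cap\N_{\T}[t]=\{s\}$ for the common neighbour $s\neq u$. Remark~\ref{rem=deg2}(1) is just \eqref{C1}, and its degree conclusion concerns the \emph{middle} vertex of $r_1,r,s,t,t_1$. It can only give $\deg_{\T}(s)=2$ and says nothing about $u$, so the contradiction with ``$s=u$ having degree $\ge 3$'' that you invoke does not exist. Comparing $(U\setminus(U_{\T_i}\cup\{F\}))\cup N_{\T_i}$ with $U$ does not help either, since $N_{\T_i}$ can lie far below $U_{\T_i}$ in $>_{\lex}$.

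The missing idea, which is how the paper proceeds, is to use $\deg_{\T}(s)=2$ locally. (If $t$ is a leaf, $\{s,t\}$ already replaces $F$ in $U$.) Then $\N_{\T}[s]=\{u,s,t\}$ has size $3<|F|$, so $\N_{\T}[s]>F$, and $(U\setminus\{F\})\cup\{\N_{\T}[s]\}>_{\lex}U$ unless $t\in V_U$. If $t\in V_U$, some $\N_{\T}[t']\in U$ contains $t$, and one iterates:
Lemma~\ref{lemma1.redu} at $t'$ produces the next facet of $N$; \eqref{C1} forces the next connecting vertex to have degree $2$; maximality of $U$ forces the chain to continue. Finiteness of $\T$ ends the chain in a simultaneous one-step shift of the $U$-facets along the path, which is $>_{\lex}U$.

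In the case $u\in V_N$, your first subcase does not give a contradiction. When all component counts agree, the matching $\{\N_{\T}[v]\}\cup N_{\T_1}\cup U_{\T_2}\cup\dots\cup U_{\T_d}$ coincides with $N$ as soon as $N$ maximizes $|N\cap U|$. So your extremal choice only normalizes $N$. What is needed is to recognize \eqref{C2} with $n=1$. For $Y=N_{\T_1}\cup U_{\T_2}\cup\dots\cup U_{\T_d}$, both $Y\cup\{F\}$ and $Y\cup\{\N_{\T}[v]\}$ are $\nu$-matchings, so $\deg_{\T}(v)=2$. Hence $\N_{\T}[v]>F$, and maximality of $U$ forces $\N_{\T}[v]$ to meet another facet of $U$, which starts the shift-by-one chain.

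The shifted-count subcase is not settled by Lemma~\ref{lemma.redu}. It reduces to the first case, because $(U\setminus(\{F\}\cup U_{\T_j}))\cup N_{\T_j}$, where $|N_{\T_j}|=|U_{\T_j}|+1$, is a $\nu$-matching avoiding $u$.

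Finally, the alternating-path paragraph, which you yourself flag as the main obstacle, is where the proof actually lives, and its claims are unsupported. A path in $B_{(U,N)}$ follows a path of $\T$ with the residue pattern of \eqref{C2} only after one has shown, step by step via \eqref{C1} and \eqref{C2}, that every connecting vertex has degree $2$. Also, a size-$2$ facet at the far end is not adjacent to $F$, so it cannot ``replace $F$''. All $U$-facets along the path must be shifted at once, and the result must be shown to beat $U$ in $>_{\lex}$.
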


\begin{proof}
Let $F=\N_{\T}[i_1]\in U$, where $\deg_{\T}(i_1)=d\ge 3$. 
Let $\T_1,\ldots,\T_d$ be the connected components of $\T\setminus\{i_1\}$. Suppose, for a contradiction, that there exists a $\nu$-matching $N$ such that 
$\N_{\T}[i_1]\notin N$. We consider two cases: (1) $i_1\notin V_N$, \quad (2) $i_1\in V_N$.

\medskip
\noindent
\textit{Case I.} Suppose that $i_1\notin V_N$. 
By Lemma~\ref{lemma1.redu}, there exists $j\in[d]$, say $j=1$, such that 
$|N_{\T_1}|=|U_{\T_1}|+1$, and a facet $G\in N_{\T_1}$ with 
$G\cap \N_{\T}[i_1]\neq\emptyset$. Write $G=\N_{\T}[i_3]$ and assume 
$\N_{\T}[i_1]\cap G=\{i_2\}$, where $i_1,i_2,i_3$ form a path in $\T$. 
By Remark~\ref{rem=deg2}(1), we have $\deg_{\T}(i_2)=2$.
\begin{figure}[ht]
    \centering
\tikzset{every picture/.style={line width=0.75pt}} 
\begin{tikzpicture}[x=0.75pt,y=0.75pt,yscale=-1,xscale=1]

\draw    (181.03,1468.03) -- (230.43,1467.82) ;
\draw    (230.43,1467.82) -- (279.83,1467.6) ;
\draw    (279.83,1467.6) -- (329.23,1467.38) ;
\draw    (329.23,1467.38) -- (378.63,1467.17) ;

\draw    (329.23,1467.38) -- (350.63,1430) ;
\draw    (329.23,1467.38) -- (300.63,1430) ;
\draw    (192.03,1431.03) -- (230.43,1467.82) ;
\draw    (364.82,1428.4) -- (378.63,1467.17) ;
\draw    (413.82,1431.4) -- (378.63,1467.17) ;
\draw    (426.82,1466.92) -- (378.63,1467.17) ;
\draw    (470,1467.17) -- (426.82,1467.17)  ;
\draw    (470,1467.17) -- (518,1467.17)  ;
\draw    (470,1467.17) -- (440,1430) ;
\draw    (470,1467.17) -- (490,1430) ;

\draw (180,1460) node [anchor=north west][inner sep=0.75pt]  [rotate=68,xslant=0]  {$\cdots $};
\draw (231.23,1469.42) node [anchor=north west][inner sep=0.75pt]  [font=\footnotesize]  {$i_{1}$};
\draw (280.83,1467.6) node [anchor=north west][inner sep=0.75pt]  [font=\footnotesize]  {$i_{2}$};
\draw (378.63,1467.17) node [anchor=north west][inner sep=0.75pt]  [font=\footnotesize]  {$i_{4}$};
\draw (329.23,1467.38) node [anchor=north west][inner sep=0.75pt]  [font=\footnotesize]  {$i_{3}$};
\draw (426.82,1467.92) node [anchor=north west][inner sep=0.75pt]  [font=\footnotesize]  {$i_{5}$};

\draw (471,1467.92) node [anchor=north west][inner sep=0.75pt]  [font=\footnotesize]  {$i_{6}$};

\draw (530,1465) node [anchor=north west][inner sep=0.75pt]  [font=\footnotesize]  {$\ldots$};

\draw (155,1465) node [anchor=north west][inner sep=0.75pt]  [font=\footnotesize]  {$\ldots$};

\draw (378.63,1430) node [anchor=north west][inner sep=0.75pt]  [font=\footnotesize]  {$\cdots$};

\draw (460,1430) node [anchor=north west][inner sep=0.75pt]  [font=\footnotesize]  {$\cdots$};
\draw (320,1430) node [anchor=north west][inner sep=0.75pt]  [font=\footnotesize]  {$\cdots$};

\filldraw[black](440,1430) circle (1.5pt) ;
  \filldraw[black] (490,1430) circle (1.5pt) ;

\filldraw[black](181.03,1468.03)  circle (1.5pt) ;
\filldraw[red] (230.43,1467.82) circle (1.5pt) ;
\filldraw[black] (279.83,1467.6) circle (1.5pt) ;
\filldraw[blue] (329.23,1467.38)  circle (1.5pt);
\filldraw[red] (378.63,1467.17) circle (1.5pt) ;
\filldraw[black] (192.03,1431.03) circle (1.5pt) ;\filldraw[black] (364.82,1428.4)  circle (1.5pt) ;
\filldraw[black] (413.82,1431.4) circle (1.5pt) ;
\filldraw[black] (426.82,1466.92) circle (1.5pt) ;
\filldraw[blue] (470,1467.17) circle (1.5pt) ;

\filldraw[red] (518,1467.17) circle (1.5pt) ;
\filldraw[black] (350.63,1430) circle (1.5pt) ;\filldraw[black] (300.63,1430) circle (1.5pt) ;
\end{tikzpicture}
   \caption{The neighborhoods of the red vertices belong to $V_U$, 
while the neighborhoods of the blue vertices belong to $V_N$.}
    \label{fig:case1}
\end{figure}

If $i_3\notin V_U$, then 
\[
U'=(U\setminus \N_{\T}[i_1])\cup \N_{\T}[i_2]\]
is a $\nu$-matching of $\T$. Since $\N_{\T}[i_2]>\N_{\T}[i_1]$ with respect to the order in Notation~\ref{Notation.facet}, we obtain $U'>_{\lex}U$ and this contradicts
the maximality of $U$. Hence $i_3\in V_U$, and there exists a neighbor
$i_4$ of $i_3$ with $\N_{\T}[i_4]\in U$ and 
$\N_{\T}[i_2]\cap \N_{\T}[i_4]=\{i_3\}$.
Moreover, $i_4$ is not a leaf of $\T$, since otherwise
$\N_{\T}[i_4]\subsetneq \N_{\T}[i_3]$, contradicting that
$\N_{\T}[i_3]$ is a facet.

Let $\Sc_1,\ldots,\Sc_t$ be the connected components of 
$\T\setminus\{i_4\}$, and assume that $i_3\in \Sc_1$. 
Since $|N_{\T_1}|=|U_{\T_1}|+1$, we have $|N_{\Sc_1}|=|U_{\Sc_1}|-1$. 
By Lemma~\ref{lemma1.redu}(ii), there exists $j\in[t]$, say $j=2$, such that 
$|N_{\Sc_2}|=|U_{\Sc_2}|+1$. Furthermore, by Lemma~\ref{lemma1.redu}(i), there exists a facet $\N_{\T}[i_6]\in N_{\Sc_2}$ such that $\N_{\T}[i_4]\cap \N_{\T}[i_6]=\{i_5\}$ (see Figure~\ref{fig:case1}). 
By Remark~\ref{rem=deg2}(1), it follows that $\deg_{\T}(i_5)=2$.

If $i_6 \notin V_U$, then the set
\[
U''=\bigl(U\setminus \{\N_{\T}[i_1],\N_{\T}[i_4]\}\bigr)
      \cup \{\N_{\T}[i_2],\N_{\T}[i_5]\}
\]
is a $\nu$-matching of $\mathcal{N}(\T)$. Since $i_4$ lies between $i_2$ and $i_5$ in $\T$ and $|\N_{\T}[i_2]|=|\N_{\T}[i_5]|=3$, it follows from 
Notation~\ref{Notation.facet} that either $\N_{\T}[i_2]>\N_{\T}[i_4]$ or 
$\N_{\T}[i_5]>\N_{\T}[i_4]$. 
Consequently, $U''>_{\lex}U$, contradicting the maximality of $U$. Hence $i_6\in V_U$. 

Repeating the same argument at $i_6$, 
we extend the path $i_1,i_2,i_3,i_4,i_5,i_6,\ldots $ strictly farther from $i_1$, where every second vertex has degree $2$
and the terminal vertex lies in $V_U$. Since $\T$ is finite,
this process must reach a leaf, where the construction cannot continue.
At that stage the alternative $i_{3k}\notin V_U$ must occur, yielding a contradiction to the 
maximality of $U$. Therefore, the initial assumption $\N_{\T}[i_1]\notin N$ is impossible.

\textit{Case II.} Suppose that $i_1\in V_N$. Then there exists a vertex 
$i_2$ adjacent to $i_1$ such that $\N_{\T}[i_2]\in N$. 
Assume $i_2\in V(\T_1)$. 
By Lemma~\ref{lemma1.redu}, one of the following occurs:

\begin{enumerate}
  \item[(i)] There exists $j$, say $j=2$, such that 
  $|N_{\T_2}|=|U_{\T_2}|+1$. 
  Since $i_2\in V(\T_1)$, it follows from the proof of 
  Lemma~\ref{lemma1.redu}(ii) that 
  $|N_{\T_1}|=|U_{\T_1}|-1$, and 
  $|N_{\T_k}|=|U_{\T_k}|$ for all $k\neq 1,2$.
  
  \item[(ii)] $|N_{\T_j}|=|U_{\T_j}|$ for all $j$.
\end{enumerate}

Suppose that {\rm (i)} holds. Then $M=(U\setminus \{\N_{\T}[i_1]\}) 
  \cup N_{\T_2}$ is a $\nu$-matching of $\mathcal{N}(\T)$, since the componentwise
cardinalities agree. Moreover, $\N_{\T}[i_1]\notin M$ and 
$i_1\notin V_M$. As $|\N_{\T}[i_1]|\ge 4$, this contradicts Case~I,
which shows that no such $\nu$-matching can exist. Hence {\rm (i)} does not occur. Therefore $|N_{\T_j}|=|U_{\T_j}|$ for all $j$. Set
\[
Y= N_{\T_1}\cup U_{\T_2}\cup \cdots \cup U_{\T_d}.
\]
Then $Y$ is a $(\nu-1)$-matching of $\mathcal{N}(\T)$, and both $Y\cup \N_{\T}[i_1]$ and $Y\cup \N_{\T}[i_2]$ are $\nu$-matchings of $\mathcal{N}(\T)$.
These two matchings are distinct, since $i_1\neq i_2$.
Because $\T$ satisfies {\rm (C2)} and $\deg_{\T}(i_1)\ge 3$,
it follows that $\deg_{\T}(i_2)=2$.  Since $Y\cup \N_{\T}[i_2]$ satisfies the same assumptions as $N$ in Case~II, we may set $N=Y\cup \N_{\T}[i_2]$ in the rest of the proof.

Since $\N_{\T}[i_2]>\N_{\T}[i_1]$ with respect to the order in 
Notation~\ref{Notation.facet}, the maximality of $U$ implies that 
$\N_{\T}[i_2]\cap V_{U\setminus\{\N_{\T}[i_1]\}}\neq\emptyset$; 
otherwise replacing $\N_{\T}[i_1]$ by $\N_{\T}[i_2]$ would yield a 
$\nu$-matching strictly larger than $U$, a contradiction. Therefore, there exists a facet $\N_\T[i_4]\in U\setminus \N_\T[i_1]$ such that $\N_\T[i_2] \cap \N_\T[i_4] = \{i_3\}$. 

Suppose that $i_4\notin V_N$. 
If $\deg_{\T}(i_4)>2$, then Case~I applied at $i_4$ yields 
$\N_{\T}[i_4]\in N$, a contradiction; hence $\deg_{\T}(i_4)\le 2$.
If $\N_{\T}[i_4]$ intersects only $\N_{\T}[i_2]$ among the facets of $N$, 
then replacing $\N_{\T}[i_2]$ by $\N_{\T}[i_4]$ and $\N_{\T}[i_1]$ produces a 
$(\nu+1)$-matching, a contradiction. 
Thus $\N_{\T}[i_4]$ must intersect another facet of $N$, and we obtain $\deg_{\T}(i_4)=2$. Let $i_5$ be the neighbor of $i_4$ distinct from $i_3$, and let 
$\N_{\T}[i_6]\in N$ be the facet intersecting $\N_{\T}[i_4]$, so that 
$i_4,i_5,i_6$ form a path in $\T$. Then $\N_{\T}[i_4]$ intersects only $\N_{\T}[i_6]$ among the facets of 
$W'=(U\setminus U_{\T_1})\cup N_{\T_1}$. 
By Condition~\eqref{C1}, we obtain $\deg_{\T}(i_5)=2$. Since $i_4\notin V_N$, replacing $\N_{\T}[i_6]$ in $N$ by 
$\N_{\T}[i_5]$ yields a $\nu$-matching $N'$ satisfying the same 
assumptions as $N$ in Case~II. Hence we may replace $N$ by $N'$ and 
continue; in particular, we may assume $i_4\in V_N$.

Now assume that $i_4\in V_N$. Then there exists a vertex $i_5$ such that 
$i_4\in \N_{\T}[i_5]\in N$. Note that $i_5$ cannot be a leaf of $\T$, 
since otherwise $\N_{\T}[i_5]\subsetneq \N_{\T}[i_4]$, contradicting the 
fact that $\N_{\T}[i_4]$ is a facet of $\mathcal{N}(\T)$. We distinguish according to the degree of $i_4$. If $\deg_{\T}(i_4)\ge 3$, then the same argument used at the beginning 
of Case~II (applied now to the pair $i_4,i_5$ in place of $i_1,i_2$) 
and Condition \eqref{C2} yields $\deg_{\T}(i_5)=2$. If $\deg_{\T}(i_4)=2$, then $i_1,\ldots,i_5$ form a path in $\T$, and 
the set 
\[
M = N \setminus \{\N_{\T}[i_2], \N_{\T}[i_5]\}
\]
is a $(\nu-2)$-matching such that both 
$M\cup\{\N_{\T}[i_2],\N_{\T}[i_5]\}=N$ and 
$M\cup\{\N_{\T}[i_1],\N_{\T}[i_4]\}$ are $\nu$-matchings. 
Since $\T$ satisfies  \eqref{C2} and $\deg_{\T}(i_1)\ge 3$, it follows again that 
$\deg_{\T}(i_5)=2$. Thus in all cases $\deg_{\T}(i_5)=2$. 
Because $i_4$ lies between $i_2$ and $i_5$ in $\T$, it follows from 
Notation~\ref{Notation.facet} that either 
$\N_{\T}[i_2]>\N_{\T}[i_4]$ or $\N_{\T}[i_5]>\N_{\T}[i_4]$. 
Consequently,
\[
(U\setminus \{\N_{\T}[i_1],\N_{\T}[i_4]\})
\cup \{\N_{\T}[i_2],\N_{\T}[i_5]\}
>_{\lex} U,
\]
contradicting the maximality of $U$.

\begin{figure}[ht]
\centering
\tikzset{every picture/.style={line width=0.7pt}} 
\begin{tikzpicture}[x=0.75pt,y=0.75pt,yscale=-1,xscale=1]
\draw    (220.03,1486.03) -- (269.43,1485.82) ;
\draw    (269.43,1485.82) -- (318.83,1485.6) ;
\draw    (318.83,1485.6) -- (368.23,1485.38) ;
\draw    (368.23,1485.38) -- (417.63,1485.17) ;
\draw    (245.03,1449.85) -- (269.43,1485.82) ;
\draw    (399.03,1450.85) -- (417.63,1485.17) ;
\draw    (437.03,1450.85) -- (417.63,1485.17) ;
\draw    (465.82,1484.92) -- (417.63,1485.17) ;
\draw    (525,1450.85) -- (514,1484.67) ;
\draw    (490,1450.85) -- (514,1484.67) ;
\draw    (514,1484.67) -- (465.82,1484.92) ;

\draw (368.23,1485.38) -- (385,1450.85);
\draw (368.23,1485.38) -- (350,1450.85);
\draw (514,1484.67) -- (555,1484.67);

\draw (270.23,1487.42) node [anchor=north west][inner sep=0.75pt]  [font=\footnotesize]  {$i_{1}$};
\draw (319.83,1485.6) node [anchor=north west][inner sep=0.75pt]  [font=\footnotesize]  {$i_{2}$};
\draw (417.63,1485.17) node [anchor=north west][inner sep=0.75pt]  [font=\footnotesize]  {$i_{4}$};
\draw (368.23,1485.38) node [anchor=north west][inner sep=0.75pt]  [font=\footnotesize]  {$i_{3}$};
\draw (465.82,1484.92) node [anchor=north west][inner sep=0.75pt]  [font=\footnotesize]  {$i_{5}$};
\draw (514,1484.67) node [anchor=north west][inner sep=0.75pt]  [font=\footnotesize]  {$i_{6}$};

\draw (555,1484.67) node [anchor=north west][inner sep=0.75pt]  [font=\footnotesize]  {$i_{7}$};

\draw (408,1450) node [anchor=north west][inner sep=0.75pt]  [font=\footnotesize]  {$\dotsc $};
\draw (500,1450) node [anchor=north west][inner sep=0.75pt]  [font=\footnotesize]  {$\dotsc $};
\draw (560,1482) node [anchor=north west][inner sep=0.75pt]  [font=\footnotesize]  {$\dotsc $};
\draw (193.23,1482) node [anchor=north west][inner sep=0.75pt]  [font=\footnotesize]  {$\dotsc $};

\draw (358,1450.85) node [anchor=north west][inner sep=0.75pt]  [font=\footnotesize]  {$\dotsc $};

\draw (220,1472.22) node [anchor=north west][inner sep=0.75pt]  [font=\footnotesize,rotate=-302.13]  {$\dotsc $};

\filldraw[black](220.03,1486.03)  circle (1.5pt) ;
\filldraw[red](269.43,1485.82)  circle (1.5pt) ;
\filldraw[blue](318.83,1485.6) circle (1.5pt) ;\filldraw[black](368.23,1485.38)  circle (1.5pt) ;\filldraw[red](417.63,1485.17) circle (1.5pt) ;
\filldraw[black](245.03,1449.85)  circle (1.5pt) ;
 \filldraw[black](399.03,1450.85) circle (1.5pt) ;
 \filldraw[black](437.03,1450.85) circle (1.5pt) ;
  \filldraw[blue](465.82,1484.92)circle (1.5pt) ;
\filldraw[black](525,1450.85)  circle (1.5pt) ;\filldraw[black] (490,1450.85) circle (1.5pt) ;
\filldraw[black] (514,1484.67)  circle (1.5pt) ;

\filldraw[black] (385,1450.85) circle (1.5pt) ;
  \filldraw[black] (350,1450.85)  circle (1.5pt) ;
  \filldraw[red](555,1484.67) circle (1.5pt) ;

\end{tikzpicture}

  \caption{The neighborhoods of the red vertices belong to $V_U$, 
while the neighborhoods of the blue vertices belong to $V_N$.}
    \label{fig:case2}
\end{figure}
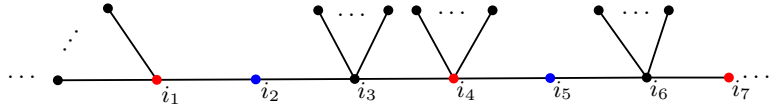

Thus $\N_{\T}[i_5]\cap V_{U\setminus\{\N_{\T}[i_4]\}}\neq\emptyset$, 
so there exists a facet $\N_{\T}[i_7]\in U$ intersecting $\N_{\T}[i_5]$. 
Repeating the same argument alternately with facets of $N$ and $U$, 
we extend the simple path $i_1,i_2,i_3,i_4,i_5,i_6,i_7,\ldots $ strictly farther away from $i_1$ in $\T$, while each newly created 
intermediate vertex has degree $2$. Since $\T$ is finite, this process must terminate, and at the terminal step the alternative $\N_{\T}[i_{3k+2}]\cap V_{U\setminus\{\N_{\T}[i_{3k+1}]\}}\neq\emptyset $ can no longer hold. Hence the complementary case must occur, 
which yields the contradiction to the maximality of $U$. 
Therefore $\N_{\T}[i_1]\in N$, contradicting our assumption.
\end{proof}


\begin{proposition}\label{prop:level}
Let $\T$ be a tree satisfying  \eqref{C1} and  \eqref{C2}. 
Let $U$ be the unique $\nu$-matching that is maximal with respect to 
$>_{\mathrm{lex}}$ (Notation~\ref{Notation.facet}), 
and let $N$ be any $\nu$-matching of $\cN(\T)$. 
Suppose that there exist facets $F_{3,a,b}\in U$ and 
$G_{3,c,d}\in N$ such that $|F_{3,a,b}\cap G_{3,c,d}|=1$ and 
$F_{3,a,b}$ is disjoint from every other facet of $N$. 
Then $a<c$, equivalently 
$F_{3,a,b}>_{\mathrm{lex}} G_{3,c,d}$.
\end{proposition}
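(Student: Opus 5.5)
Write $F_{3,a,b}=\N_{\T}[r]$ and $G_{3,c,d}=\N_{\T}[t]$. Both closed neighborhoods have size three, so $\deg_\T(r)=\deg_\T(t)=2$. Since $|F\cap G|=1$, the vertices $r,t$ lie at distance two, with common neighbour $s$, and $F\cap G=\{s\}$. We have $r\ne t$, so $a\neq c$ or the facets differ in the third index. The plan is to rule out $a>c$ directly, and to show that $a=c$ gives a contradiction as well.

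\emph{Step 1: reduction to a single swap.} By hypothesis, $F$ meets no facet of $N$ other than $G$. Hence Remark~\ref{rem=deg2}(2) applies with $Y=N\setminus\{G\}$: both $\{F\}\cup Y$ and $N$ are $\nu$-matchings, and (\ref{C1}) forces $\deg_\T(s)=2$. Thus $r,s,t$ are consecutive vertices on a path, all of degree two. In particular the levels of $r,s,t$ with respect to the root $x$ are consecutive, either increasing or decreasing along the path. So $|a-c|=2$, and the case $a=c$ cannot occur. It remains to exclude $c=a-2$.

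\emph{Step 2: use the maximality of $U$.} Suppose $c<a$, so $G>F$ in the facet order of Notation~\ref{Notation.facet}. Consider $U'=(U\setminus\{F\})\cup\{G\}$. If $G$ is disjoint from every facet of $U\setminus\{F\}$, then $U'$ is a $\nu$-matching. Moreover $U'>_{\lex}U$, since replacing a facet by a larger one raises the matching in the lexicographic order; this contradicts the choice of $U$. Therefore some facet $\N_\T[w]\in U$ with $w\neq r$ meets $G=\{s,t,t'\}$, where $t'$ is the other neighbour of $t$, at a point other than $s$. The point $s$ is excluded because $s\in F$ and $U$ is a matching. Since $w\ne r$ and $\N_\T[w]\cap F=\emptyset$, the only possibility is $t'\in\N_\T[w]$. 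This uses $\deg(t)=2$ and the fact that $t\in\N_\T[w]$ would force $w\in\{s,t,t'\}$; the cases $w=s$ and $w=t$ meet $F$, leaving $w=t'$ or $w$ a neighbour of $t'$.

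\emph{Step 3: propagate an alternating chain (the main obstacle).} We now have $U\ni F,\N_\T[w]$ and $N\ni G$, with $G$ squeezed between them. Next we find the facet of $N$ covering the next vertex. This repeats the alternating-path argument from Case~II of Proposition~\ref{prop:4face}. If $w\notin V_N$ and $\N_\T[w]$ meets only $G$ in $N$, then $(N\setminus\{G\})\cup\{F,\N_\T[w]\}$ is a $(\nu+1)$-matching, which is impossible. So the chain continues: each time we either produce a larger lexicographic matching than $U$, or extend the path $r,s,t,\dots$ strictly away from $r$. Along the way, (\ref{C1}) and (\ref{C2}), through Remark~\ref{rem=deg2}, force the intermediate vertices to have degree $2$. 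By Proposition~\ref{prop:4face}, the facets of $U$ of size at least four are common to $U$ and $N$, so they cannot appear in the chain. Since $c<a$, the path moves toward the root, so levels decrease. Finiteness then forces the terminal swap to be available, giving $U''>_{\lex}U$: the simultaneous swap replaces $U$'s facets along the chain by $N$'s facets, which sit one position closer to the root. This contradicts the maximality of $U$, so $a<c$.

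The delicate point is keeping the bookkeeping of the chain consistent. The chain may instead terminate at a leaf, at a size-two facet, or by reaching $x$. At each such step one checks that the swapped collection is still a matching of size $\nu$ that is lexicographically larger. Alternatively, it gives a $(\nu+1)$-matching, which is also a contradiction.
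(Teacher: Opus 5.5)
Your outline has the same skeleton as the paper's proof: Remark~\ref{rem=deg2}(2) giving $\deg_\T(s)=2$, maximality of $U$, Proposition~\ref{prop:4face}, the $(\nu+1)$-matching obstruction, and an alternating chain. Your Step~1 is fine, including the observation that $\deg_\T(s)=2$ forces $|a-c|=2$.

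The gap is in the swap you use, which is exactly the step you flag as the obstacle. Replacing $F$ by $G=\{s,t,t'\}$ only gives $t'\in V_{U\setminus\{F\}}$. That leaves open the case of a facet $\N_\T[w]\in U$ with $w$ a neighbour of $t'$ other than $t'$. There the next $U$-facet is not centred on the path, the chain loses its shape, and your claim that levels keep decreasing is unjustified. The paper instead replaces $F$ by $\N_\T[s]=\{r,s,t\}$. This facet has size three, is closer to the root than $r$, and meets $U\setminus\{F\}$ only if $t\in V_U$. Since $\N_\T[s]$ and $\N_\T[t]$ both meet $F$, this forces $\N_\T[t']\in U$, so the next $U$-facet is centred at the next vertex of the path. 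Your extra case is then excluded at once, because there $t\notin V_U$.

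The same issue recurs at the end of the chain. Your $r,s,t,t'$ are the paper's $i_1,i_2,i_3,i_4$, with $\N_\T[i_1],\N_\T[i_4],\dots,\N_\T[i_{3k+1}]\in U$ and $\N_\T[i_3],\dots,\N_\T[i_{3k+3}]\in N$.
\begin{itemize}
\item Replacing the $U$-facets of the chain by $N$'s facets requires all of $\N_\T[i_{3k+3}]\setminus\{i_{3k+2}\}$ to avoid $V_U$.
\item At termination nothing controls $\deg_\T(i_{3k+3})$. For $k\ge 1$, if $|\N_\T[i_{3k+3}]|\ge 4$, the largest facet in the symmetric difference is $\N_\T[i_{3k+1}]\in U$. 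So your new matching is lexicographically \emph{smaller} than $U$.
\end{itemize}
The missing idea is to swap in the neighbourhoods of the middle vertices, $\N_\T[i_2],\N_\T[i_5],\dots,\N_\T[i_{3k+2}]$. These vertices have degree two by Remark~\ref{rem=deg2}, so these facets have size three. They lie in the union of the removed $U$-facets together with the single vertex $i_{3k+3}$. Moreover, $\N_\T[i_{3k+2}]$ is the largest facet of the symmetric difference. Hence the only requirement is $i_{3k+3}\notin V_U$, which is precisely the condition under which the chain stops.
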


\begin{proof}

Set $F_{3,a,b}=\N_{\T}[i_1]$ and $G_{3,c,d}=\N_{\T}[i_3]$ with 
$F_{3,a,b}\cap G_{3,c,d}=\{i_2\}$, where $i_1,i_2,i_3$ form a path in $\T$. 
Since $\N_{\T}[i_1]$ is disjoint from all other facets of $N$, Remark~\ref{rem=deg2}(2) 
implies that $\deg_{\T}(i_2)=2$. 
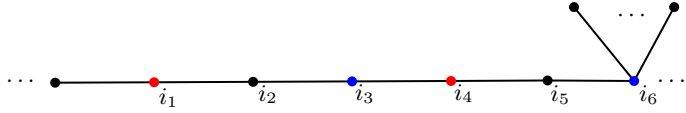
\begin{figure}[ht]
    \centering

\tikzset{every picture/.style={line width=0.75pt}} 

\begin{tikzpicture}[x=0.75pt,y=0.75pt,yscale=-1,xscale=1]

\draw    (181.03,1468.03) -- (230.43,1467.82) ;
\draw    (230.43,1467.82) -- (279.83,1467.6) ;
\draw    (279.83,1467.6) -- (329.23,1467.38) ;
\draw    (329.23,1467.38) -- (378.63,1467.17) ;
\draw    (426.82,1466.92) -- (378.63,1467.17) ;
\draw    (470,1467.17) -- (426.82,1466.92) ;

\draw    (470,1467.17) -- (440,1430) ;
\draw    (470,1467.17) -- (490,1430) ;

\draw (231.23,1469.42) node [anchor=north west][inner sep=0.75pt]  [font=\footnotesize]  {$i_{1}$};
\draw (280.83,1467.6) node [anchor=north west][inner sep=0.75pt]  [font=\footnotesize]  {$i_{2}$};
\draw (378.63,1467.17) node [anchor=north west][inner sep=0.75pt]  [font=\footnotesize]  {$i_{4}$};
\draw (329.23,1467.38) node [anchor=north west][inner sep=0.75pt]  [font=\footnotesize]  {$i_{3}$};
\draw (426.82,1467.92) node [anchor=north west][inner sep=0.75pt]  [font=\footnotesize]  {$i_{5}$};

\draw (471,1467.92) node [anchor=north west][inner sep=0.75pt]  [font=\footnotesize]  {$i_{6}$};

\draw (480,1465) node [anchor=north west][inner sep=0.75pt]  [font=\footnotesize]  {$\ldots$};

\draw (155,1465) node [anchor=north west][inner sep=0.75pt]  [font=\footnotesize]  {$\ldots$};


\draw (460,1430) node [anchor=north west][inner sep=0.75pt]  [font=\footnotesize]  {$\cdots$};

\filldraw[black](440,1430) circle (1.5pt) ;
  \filldraw[black] (490,1430) circle (1.5pt) ;

\filldraw[black](181.03,1468.03)  circle (1.5pt) ;
\filldraw[red] (230.43,1467.82) circle (1.5pt) ;
\filldraw[black] (279.83,1467.6) circle (1.5pt) ;
\filldraw[blue] (329.23,1467.38)  circle (1.5pt);
\filldraw[red] (378.63,1467.17) circle (1.5pt) ;
\filldraw[black] (426.82,1466.92) circle (1.5pt) ;
\filldraw[blue] (470,1467.17) circle (1.5pt) ;

\end{tikzpicture}
    \caption{The neighborhoods of the red vertices belong to $V_U$ and the the neighborhoods of the blue vertices belong to $V_N$.}
    \label{fig:placeholder}
\end{figure}

Following Notation~\ref{Notation.facet}, we have $\N_{\T}[i_2]=F_{3,j,k}$. Assume for contradiction that $c<a$. 
Then $c<j<a$, equivalently,
\[
\N_{\T}[i_3]>\N_{\T}[i_2]>\N_{\T}[i_1].
\]   

We first note that $i_3\in V_U$. Indeed, if $i_3\notin V_U$, then replacing $\N_{\T}[i_1]$ in $U$ by 
$\N_{\T}[i_2]$ yields a $\nu$-matching strictly larger than $U$, 
contradicting its maximality. Hence $i_3\in V_U$, and there exists 
a neighbor $i_4$ of $i_3$ with $\N_{\T}[i_4]\in U$ such that 
$\N_{\T}[i_2]\cap \N_{\T}[i_4]=\{i_3\}$. 

The vertex $i_4$ is not a leaf of $\T$, since otherwise 
$\N_{\T}[i_4]\subsetneq \N_{\T}[i_3]$, contradicting that 
$\N_{\T}[i_3]$ is a facet of $\mathcal{N}(\T)$. 
Moreover, by Proposition~\ref{prop:4face}, we have 
$|\N_{\T}[i_4]|\le 3$, and therefore $|\N_{\T}[i_4]|=3$. Furthermore, $\N_{\T}[i_4]$ must intersect at least two facets of $N$; 
otherwise replacing $\N_{\T}[i_3]$ in $N$ by 
$\N_{\T}[i_1]$ and $\N_{\T}[i_4]$ would produce a $(\nu+1)$-matching, 
a contradiction. Thus there exists a path $i_4,i_5,i_6$ in $\T$ such that 
$\N_{\T}[i_4]\cap \N_{\T}[i_6]=\{i_5\}$. 
In this situation the component cardinalities differ by one in the component containing $i_6$, so the hypotheses of 
Remark~\ref{rem=deg2} are satisfied. Hence $\deg_{\T}(i_5)=2$.

We next show that $i_6\in V_U$. 
If $i_6\notin V_U$, then replacing 
$\N_{\T}[i_1]$ and $\N_{\T}[i_4]$ in $U$ by 
$\N_{\T}[i_2]$ and $\N_{\T}[i_5]$ produces a 
$\nu$-matching strictly larger than $U$ with respect to 
$>_{\mathrm{lex}}$, contradicting the maximality of $U$. Hence $i_6\in V_U$, 
so there exists a neighbor $i_7$ of $i_6$ with 
$\N_{\T}[i_7]\in U$. As in the case of $i_4$, the vertex $i_7$ is not a leaf of $\T$, 
and we obtain a path $i_6,i_7,i_8$ in $\T$. 
Consider the matching
\[
M=(N\setminus\{\N_{\T}[i_3],\N_{\T}[i_6]\})
   \cup\{\N_{\T}[i_2],\N_{\T}[i_5]\}.
\]
In this situation the component cardinalities again differ by one 
in the component containing $i_7$, so the hypotheses of 
Remark~\ref{rem=deg2} are satisfied. 
Hence $\deg_{\T}(i_6)=2$.

Repeating the same argument alternately with facets of $N$ and $U$, we extend the path $i_1,i_2,i_3,i_4$, $i_5,i_6,i_7,\ldots$ strictly farther away from $i_1$ in $\T$. 
At each step the newly introduced vertices have degree $2$, while the terminal vertex lies in $V_U$. Since $\T$ is finite, the process must eventually terminate. 
At the final step the condition $i_{3k}\in V_U$ fails, 
and the complementary case yields a contradiction 
to the maximality of $U$. 
Hence $a<c$.
\end{proof}

\begin{lemma} \label{lem.ui}
    Let $\T$ be a tree. Let $U$ be the unique $\nu$-matching which is maximal with respect to $>_{\lex}$. Let $M$ and $N$ be two $\nu$-matchings with $F \in M\setminus U$ and $G \in N\setminus U$.  Assume that $(F\setminus G)\cap V_N=\emptyset$.  Then there exist $H\in U$ such that $H \cap F\neq \emptyset$ and $H \cap G\neq \emptyset$.
\end{lemma}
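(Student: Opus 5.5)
The plan is to argue by contradiction using only that $U$, $M$, $N$ are $\nu$-matchings; I do not expect to need the lex-maximality of $U$ beyond $U$ being a maximum matching. Write $F=\N_{\T}[u]$ and $G=\N_{\T}[w]$; note $u\neq w$, since otherwise $F=G$ and then $(F\setminus G)\cap V_N=\emptyset$ carries no content.

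\emph{Step 1: the position of $F$ relative to $N$.} If $F\cap G=\emptyset$, the hypothesis gives $F\cap V_N=\emptyset$, so $N\cup\{F\}$ is a $(\nu+1)$-matching, which is impossible. Hence $F\cap G\neq\emptyset$. Moreover $F\setminus G$ misses every facet of $N$, and $F\cap G\subseteq G$ misses every facet of $N$ other than $G$. So $G$ is the unique facet of $N$ meeting $F$, and
\[
N'=(N\setminus\{G\})\cup\{F\}
\]
is again a $\nu$-matching. Since $\T$ is a tree and $F\cap G\neq\emptyset$, either $\{u,w\}\in E(\T)$, or $u$ and $w$ have a unique common neighbour $c$ with $F\cap G=\{c\}$.

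\emph{Step 2: localise via a cut.} In the second case, delete $c$ and let $\T_u$, $\T_w$ be the components of $\T\setminus\{c\}$ containing $u$ and $w$. In the first case, delete the edge $\{u,w\}$ and let $\T_u$, $\T_w$ be the two resulting sides. In either case, a facet of $\mathcal N(\T)$ meets both $F$ and $G$ essentially only through $F\cap G$ or through the facets crossing the cut. These crossing facets are $\N_{\T}[c]$ in the first case, and $F$, $G$ in the second, exactly as in the proof of Lemma~\ref{lem.car} and in \eqref{eq}.

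Suppose no $H\in U$ meets both $F$ and $G$. Then every facet of $U$ meeting $F$ lies, apart from the crossing facets, on the $u$-side and avoids $G$. I then want to compare the side counts $|U_{\T_u}|,|N_{\T_u}|$ and $|U_{\T_w}|,|N_{\T_w}|$, using the inequalities of Lemma~\ref{lem.car} and the exchange arguments of Lemma~\ref{lemma1.redu}. Two key observations feed this comparison. First, the $u$-side part of $N$ together with $F$ is still a matching, because $F\setminus G$ avoids $V_N$. Second, by assumption the $w$-side part of $U$, together with $G$, does not collide with the $u$-side facets of $U$ meeting $F$.

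\emph{Step 3: build a $(\nu+1)$-matching.} Splice
\[
\bigl(N'_{\T_u}\cup\{F\}\bigr)\ \cup\ \bigl(U_{\T_w}\cup\{G\}\bigr)
\]
(or the symmetric variant $N_{\T_w}\cup\{G\}$ with $U_{\T_u}$), choosing the variant according to which side count is larger. One then checks that the resulting set is pairwise disjoint: this is exactly where the assumption that no $H\in U$ meets both $F$ and $G$ is used. One also checks that its cardinality exceeds $\nu$, by counting with Lemma~\ref{lem.car}. This contradicts the definition of $\nu$, so some $H\in U$ meets both $F$ and $G$.

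\emph{Main obstacle.} The main obstacle is the bookkeeping in Step 3 for the facets crossing the cut, namely $\N_{\T}[c]$ when $c$ exists, and $F$, $G$ themselves in the adjacent case. One must make sure that a facet of $U$ containing $c$ is not counted on either side. Such a facet would automatically meet both $F$ and $G$, which is precisely the conclusion, so it can be excluded at the outset. If the direct count fails, the fallback is to apply the acyclicity of $B_{(U,N')}$ from Lemma~\ref{lem.tree}. There, $F$ and $G$ are joined through the common vertex of $F\cap G$, and a facet of $U$ adjacent to both would close the required path; the plan is to derive a contradiction from the absence of such a facet via Proposition~\ref{Prop2.m=n}.
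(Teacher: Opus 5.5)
\textbf{There is a genuine gap: your Step 3, which carries the whole argument, does not work as written.}

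\begin{itemize}
\item The spliced set $\bigl(N'_{\T_u}\cup\{F\}\bigr)\cup\bigl(U_{\T_w}\cup\{G\}\bigr)$ contains both $F$ and $G$. You showed in Step 1 that $F\cap G\neq\emptyset$, so this set is never a matching.
\item Even $U_{\T_w}\cup\{G\}$ need not be a matching. Your contrary assumption only excludes facets of $U$ meeting both $F$ and $G$. A facet of $U_{\T_w}$ may well contain $w$, or another vertex of $G\setminus F$.
\item When $\deg_{\T}(c)\ge 3$, a two-sided splice ignores the other components of $\T\setminus\{c\}$. It then cannot reach size $\nu+1$.
\item Lemma~\ref{lem.car} only gives $|U_{\T_i}|-1\le |N_{\T_i}|\le |U_{\T_i}|+1$. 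This does not tell you on which side a larger matching can be built.
\end{itemize}
Your fallback via $B_{(U,N')}$ and Proposition~\ref{Prop2.m=n} is too vague to count as an argument. Also, the case $F=G$ is allowed by the statement and is not vacuous. There you still need to note that $F\notin U$ meets some facet of $U$, by maximality of $U$.

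\textbf{How to repair it.} Your cut can be made to work, and it then gives a route genuinely different from the paper's. The paper deletes the centre $r$ of $F$ and compares both $N$ and $U$ with $M$ via Lemma~\ref{lemma1.redu}. In your version:
\begin{enumerate}
\item Under the contrary assumption, $c\notin V_U$ (resp.\ $u,w\notin V_U$ in the adjacent case). So every facet of $U$ lies in a single component $\T_i$ of the cut, and $\sum_i |U_{\T_i}|=\nu$.
\item Every facet of $N$ other than $G$ also lies in a single component, so $\sum_i|N_{\T_i}|=\nu-1$.
\item If $|N_{\T_i}|>|U_{\T_i}|$ for some $i$, then $(U\setminus U_{\T_i})\cup N_{\T_i}$ is a $(\nu+1)$-matching.
\item Otherwise exactly one component is deficient. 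Hence $|N_{\T_u}|\ge |U_{\T_u}|$ or $|N_{\T_w}|\ge |U_{\T_w}|$.
\item In the first case, $\{F\}\cup N_{\T_u}\cup (U\setminus U_{\T_u})$ is a matching of size at least $\nu+1$. This uses $F\subseteq V(\T_u)\cup\{c\}$ (resp.\ $\cup\{w\}$) and the fact that $F$ meets no facet of $N$ except $G$.
\item In the second case, $\{G\}\cup N_{\T_w}\cup(U\setminus U_{\T_w})$ plays the same role.
\end{enumerate}
The key point is that each crossing facet must be paired with the $N$-part of its own side and with $U$ everywhere else, never with $U$ on the opposite side. Done this way, the argument needs neither $M$, nor $F,G\notin U$, nor the lex-maximality of $U$.
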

\begin{proof}
    We have $F\cap G \neq \emptyset$ because $M$ and $N$ are maximal matchings. Let $F=\N_{\T}[r]$ and $G=\N_{\T}[t]$. Suppose, to the contrary, that for every $H \in U$ either $H \cap \N_{\T}[r] = \emptyset$ or $H \cap \N_{\T}[t] = \emptyset$.

    Case I: Let $|\N_{\T}[r]\cap \N_{\T}[t]|=1$, and set  $\N_{\T}[r]\cap \N_{\T}[t] =\{s\}$. From our assumption we have that $\N_{\T}[x]\notin U$ for all $x\in \N_{\T}[s]$. Let $\T_1, \ldots, T_d$ be the connected components of $\T\setminus r$. Set $s\in \T_1$. By Lemma~\ref{lemma1.redu} and the assumption $(F\setminus G)\cap V_N=\emptyset$, it follows that $$|N_{\T_1}|=|M_{\T_1}|+1 \text{ and } |N_{\T_i}|=|M_{\T_i}|$$ for all $i=2,\ldots,d$. Note that if $|U_{\T_i}|=|M_{\T_i}|+1$ for some $2\leq i\leq d$, then  
    $$U_{\T_i}\cup N_{\T_1}\cup \bigcup_{k=2,k \neq i}^d M_{\T_k}$$ 
    forms a $(\nu+1)$-matching of $\cN(\T)$, since each $\T_i$ has pairwise disjoint set of vertices. This contradicts the matching number of $\cN(\T)$. Further, since $\N_{\T}[x]\notin U$ for all $x\in \N_{\T}[s]$,  by Lemma \ref{lemma1.redu}(i) we have $|U_{\T_1}|=|M_{\T_1}|$. Then from Lemma \ref{lemma1.redu}, we obtain $|U_{\T_i}|=|M_{\T_i}|$ for all $i=1,\ldots,d$ and $\N_{\T}[y]\in U$ for some $y\in \N_{\T}[r]\setminus \{r,s\}$. Then replacing the facets of $U_{\T_1}$ in $U$ by those of $N_{\T_1}$ produces a $(\nu+1)$-matching, a contradiction. This shows that $|\N_{\T}[r]\cap \N_{\T}[t]|=1$ does not hold. 
    
    Case II. Let $|\N_{\T}[r]\cap \N_{\T}[t]|=2$, where $\{r,t\}\in E(\T)$. Let $\T_1, \ldots, T_d$ be the connected components of $T\setminus r$. Set $t\in \T_1$. From our assumption we have that $\N_{\T}[x]\notin U$ for all $x\in V_{\N_{\T}[r],\N_{\T}[t]}$. In particular, $r\notin V_U$. By Lemma~\ref{lemma1.redu} and the assumption $(F\setminus G)\cap V_N=\emptyset$, it follows that $$|N_{\T_i}|=|M_{\T_i}|$$ for all $i=1,\ldots,d$. Note that if $|U_{\T_i}|=|M_{\T_i}|+1$ for some $2\leq i\leq d$ then  $$U_{\T_i}\cup (N\setminus N_{\T_i})$$ forms a $(\nu+1)$-matching of $\cN(\T)$, since $r\notin V_U$. This contradicts the matching number of $\cN(\T)$. Further,  since $\N_{\T}[x]\notin U$ for all $x\in \N_{\T}[t]$, by Lemma \ref{lemma1.redu}(i) we have $|U_{\T_1}|=|M_{\T_1}|$. Then from  Lemma \ref{lemma1.redu}, we obtain $|U_{\T_i}|=|M_{\T_i}|$ for all $i=1,\ldots,d$ and $r\in V_U$, a contradiction.  
\end{proof}

\begin{proposition} \label{lemma.H}
Let $\T$ be a tree satisfying \eqref{C1} and \eqref{C2}. Let $U$ be the unique $\nu$-matching maximal with respect to  $>_{\mathrm{lex}}$ (Notation~\ref{Notation.facet}), and let 
$M$ and $N$ be two $\nu$-matchings of $\mathcal{N}(\T)$ such that:
    \begin{enumerate} [label=(\roman*), ref=\roman*]
        \item \label{con.H1} $M>_\ell N$ (Notation~\ref{notation.total}),
    \item \label{con.H2} there do not exist facets $F\in M$ and $G\in N$ with 
    $F>G$ such that $F\setminus G$ is a singleton and 
    $F\setminus G \notin V_N$.
\end{enumerate}
  Then there exist facets $H\in U$ and $G\in N$, and a vertex 
$x\in H\setminus G$, such that $W=(N\setminus G) \cup H$ is a $\nu$-matching and $x\in V_M\setminus V_N$.
\end{proposition}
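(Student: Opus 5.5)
We argue by cases on how $N$ sits relative to $U$. The role of this proposition in the linear-quotients argument is to produce, for $M>_\ell N$, a generator $W>_\ell N$ with $\supp(W)\setminus\supp(N)$ a single variable $x$ dividing the generator of $M$. So we look for $H\in U\setminus N$ that meets exactly one facet $G$ of $N$, with $|H\setminus G|=1$.

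\emph{Step 1: $N\neq U$ and there is a facet of $U$ outside $N$.} If $N=U$, then $N$ is the maximum for $>_\ell$. This uses two facts. First, $\beta(U)$ is maximal, since $>_{\lex}$ prefers facets of size two first. Second, $\level_U(U)=0$. Hence $M>_\ell N$ forces $N\neq U$.

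\emph{Step 2: choosing $H$.} Consider the bipartite graph $B_{(U,N)}$ of Construction~\ref{cons:bipartite}. By Lemma~\ref{lem.tree} it is a forest, and by Remark~\ref{Rem.intersection} (maximality of $\nu$) every vertex has degree at least $1$. A leaf of this forest on the $\hat U$ side gives $H\in U\setminus N$ meeting exactly one $G\in N$. Then $W=(N\setminus G)\cup H$ is automatically a $\nu$-matching.

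We must choose the leaf so that $H\setminus G$ contains a vertex of $V_M\setminus V_N$. By Proposition~\ref{prop:4face}, facets of $U$ of size $\ge4$ lie in every $\nu$-matching, so $|H|\le 3$. Since $H$ meets $N$ only in $G$, we have $(H\setminus G)\cap V_N=\emptyset$. Thus it suffices to find such a leaf $H$ with $(H\setminus G)\cap V_M\neq\emptyset$.

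\emph{Step 3: using the hypotheses (i) and (ii).} We split by the first clause of $>_\ell$ that separates $M$ and $N$.

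If $\beta(M)>\beta(N)$, some facet $F=\N_\T[\ell']$ of size two in $M$ is not in $N$. The leaf $\ell'$ of $F$ gives $F$ high priority, and $F$ meets $N$ in a facet $G$. Using Lemma~\ref{lem.ui} with $(F\setminus G)\cap V_N$, we either obtain an $H\in U$ meeting both $F$ and $G$, or contradict (ii). Hypothesis (ii) is what prevents $F$ itself from being an elementary swap that is not of the required form.

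If $\beta$ agrees and $\level_U(M)<\level_U(N)$, then $|U\cap M|>|U\cap N|$. Hence some $H\in U\cap M$ is not in $N$, so $H\subseteq V_M$, and any vertex of $H\setminus V_N$ lies in $V_M\setminus V_N$. We then pick such an $H$ that is a leaf of $B_{(U,N)}$ by descending along the forest: if $H$ meets two facets of $N$, apply Lemma~\ref{lemma.redu} and Lemma~\ref{lemma1.redu} to reroute to a neighbouring facet of $U\cap M$.

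In the lex case, the first differing facet $F\in M$, $F>G$, combined with (ii) and Proposition~\ref{prop:level}, again yields an $H\in U$ meeting $G$ only, with its extra vertex in $F\subseteq V_M$.

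\emph{Main obstacle.} The hardest step is guaranteeing simultaneously that $H$ meets only one facet of $N$ and that its private vertex lies in $V_M$. We would attack this through the tree structure of $B_{(U,N)}$ and $B_{(M,N)}$, using (C1) and (C2) via Remark~\ref{rem=deg2}. These force the intermediate vertices to have degree two, so that $|H\setminus G|=1$ and alternating-path arguments like those in Proposition~\ref{prop:4face} terminate.
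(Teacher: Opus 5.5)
\textbf{There is a genuine gap: your level case picks the witness from the wrong place, and the lex case is not argued.}

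Your Steps 1 and 2 are correct. By Lemma~\ref{lem.tree}, the forest $B_{(U,N)}$ has a degree-one vertex on the $\hat U$ side, which gives $H\in U\setminus N$ meeting a single $G\in N$. The case $\beta(M)>\beta(N)$ is in fact vacuous under (ii): a size-two facet $\{\ell,p\}\in M$ at a support vertex $p$ where $N$ has none meets only one facet $G\ni p$ of $N$, and $|G|\ge 3$.

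The real content of the proposition, however, is the requirement $x\in V_M\setminus V_N$. In the case $\level_U(M)<\level_U(N)$ you look for the witness inside $(U\cap M)\setminus N$ and reroute to a leaf of $B_{(U,N)}$. This fails in general, as the following example shows.
\begin{itemize}
\item Take the tree formed by the path $\alpha,a',a,r,s,t_0,t_1,\dots,t_5,\lambda$ together with the path $r,b,b',\beta$, rooted at $\alpha$.
\item Only $r$ has degree $\ge 3$, so \eqref{C2} holds vacuously. Here $\nu=5$, and the only $\nu$-matching containing $\N_\T[a]$ is $\{\N_\T[a],\{\beta,b'\},\N_\T[t_0],\N_\T[t_3],\{t_5,\lambda\}\}$ (symmetrically for $\N_\T[b]$). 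From this \eqref{C1} follows.
\item One computes $U=\{\{\alpha,a'\},\{\beta,b'\},\{t_5,\lambda\},\N_\T[s],\N_\T[t_2]\}$.
\item Put $M=(U\setminus\{\N_\T[s]\})\cup\{\N_\T[r]\}$ and $N=\{\{\alpha,a'\},\{\beta,b'\},\{t_5,\lambda\},\N_\T[t_0],\N_\T[t_3]\}$.
\item Then $\beta(M)=\beta(N)$ and $\level_U(M)=1<2=\level_U(N)$. Condition (ii) holds, since the only singleton difference is $\N_\T[t_2]\setminus\N_\T[t_3]=\{t_1\}\subseteq V_N$.
\item But $(U\cap M)\setminus N=\{\N_\T[t_2]\}$, and $\N_\T[t_2]=\{t_1,t_2,t_3\}$ meets two facets of $N$ and lies entirely in $V_N$. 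There is nothing to reroute to.
\item The unique witness is $H=\N_\T[s]=\{r,s,t_0\}\notin M$, $G=\N_\T[t_0]$, $x=r$. The vertex $r$ lies in $V_M$ only through $\N_\T[r]=\{a,b,r,s\}\in M\setminus U$.
\end{itemize}

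So the missing idea is that $H$ may have to come from $U\setminus M$, with its private vertex supplied by a facet of $M\setminus U$. The paper handles this by a decreasing induction on $|M\cap U|$. It first reduces to the situation where every facet of $M\cap U$ either lies in $N$ or meets at least two facets of $N$. Then, with $F_1\in M\setminus U$ meeting only $G_1\in N$:
\begin{itemize}
\item Lemma~\ref{lem.ui} gives $H\in U$ meeting both $F_1$ and $G_1$.
\item The tree property of $B_{(U,N)}$ forces $H$ to meet only $G_1$.
\item After ruling out $|F_1\cap G_1|=2$, Remark~\ref{rem=deg2}(2) gives $F_1\cap G_1=\{s\}$ with $\deg_\T(s)=2$. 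This is where \eqref{C1} enters.
\item Hence $H=\N_\T[s]$ and $H\setminus G_1\subseteq F_1\setminus G_1\subseteq V_M\setminus V_N$.
\item In the inductive step, facets of $M\setminus U$ are traded for facets of $U$ without enlarging $V_M\setminus V_N$.
\end{itemize}

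Your lex case is only asserted. The first facet where $M$ and $N$ differ need not meet a single facet of $N$. Proposition~\ref{prop:level} concerns facets of $U$, so it does not by itself produce $H$. As it stands, the proposal identifies the goal but gives no argument that works in the main cases.
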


\begin{proof}
Let $M$ and $N$ be two $\nu$-matchings of $\mathcal N(\T)$ satisfying \eqref{con.H1} and \eqref{con.H2}. 
We first normalize $M$ as follows. Suppose there exist $F\in M\setminus U$ and $G\in N$ with $G>F$
such that $\bar M=(M\setminus F)\cup\{G\}$ is a $\nu$-matching. Then $\bar M>_\ell N$ still satisfies \eqref{con.H1} and \eqref{con.H2}.
Hence it suffices to prove the statement for $\bar M$ and $N$, since
$V_{\bar M}\setminus V_N\subseteq V_M\setminus V_N$. Consequently we may assume:

\medskip
\noindent{\em Assumption on $M$:}
For every $F\in M\setminus U$ there is no $G\in N$ with $G>F$ such that
$(M\setminus F)\cup\{G\}$ is a $\nu$-matching.

Note that this assumption implies
\begin{equation}\label{eq:1-dim}
\text{all one-dimensional facets of $M$ and $N$ coincide.}
\end{equation}
Indeed, let $F\in M$ with $|F|=2$. Then $F$ intersects some $G\in N$, since otherwise
$N\cup\{F\}$ would be a $(\nu+1)$-matching. If $F>G$, then $F$ and $G$
satisfy condition~\eqref{con.H2}, a contradiction. Hence $F<G$, and therefore $|G|=2$.
Moreover, $F\notin U$, since otherwise $(U\setminus\{F\})\cup\{G\}>_{\lex}U$,
contradicting the maximality of $U$. Since replacing $F$ by $G$ yields a $\nu$-matching,
the assumption forces $F=G$.

Let $M=\{F_1,\ldots,F_\nu\}$ and $N=\{G_1,\ldots,G_\nu\}$. By Remark~\ref{Rem.intersection} there exist $i,j\in[\nu]$
such that $F_i\cap G_j\neq\emptyset$, $F_i\neq G_j$, and
$F_i\cap G_k=\emptyset$ for all $k\neq j$.
After relabeling we may assume $i=j=1$. 

If $F_1\in U$, then $W=(N\setminus G_1)\cup F_1$ satisfies the conclusion,
so assume $F_1\notin U$. We may further assume that no facet of $M\cap U$ can be used directly.
Namely, 
\begin{equation}\label{eq:two-intersections}
\text{ for every $F\in M\cap U$, either } F\in N \text{ or } \bigl|\{G\in N:\; F\cap G\neq\emptyset\}\bigr|\ge 2.
\end{equation}
Indeed, if $F\in M\cap U$ intersects exactly one facet $G\in N$, then
$W=(N\setminus G)\cup F$ satisfies the conclusion.

We argue by decreasing induction on $n=|M\cap U|$.

\textit{Step 1: Let $n=\nu-1$.} Then all facets of $M$ except $F_1$ lie in $U$.
We first show $G_1\notin U$.
If $G_1\in U$, then $N'=(N\setminus G_1)\cup F_1$ is a $\nu$-matching and $M\neq N'$ because $M>_{\ell}N$.
Applying Remark~\ref{Rem.intersection} to $M$ and $N'$ yields
$F_i$ with $i\ne1$ intersecting exactly one facet of $N'$.
Since $F_i,G_1\in U$, we have $F_i\cap G_1=\emptyset$. Hence $F_i$ intersects exactly one facet of $N$, contradicting the assumption (\ref{eq:two-intersections}). Hence $G_1\notin U$.

By Lemma~\ref{lem.ui} there exists $H\in U$ such that
$H\cap F_1\neq\emptyset$ and $H\cap G_1\neq\emptyset$.
Since $F_1\in M\setminus U$, we have $H\notin M$,
and hence $U=(M\setminus F_1)\cup H$.
If $H$ intersected at least two facets of $N$, then by assumption (\ref{eq:two-intersections}), the bipartite
graph $B_{(U,N)}$ (Construction~\ref{cons:bipartite})
would not be a tree, contradicting Lemma~\ref{lem.tree}.
Hence $H$ intersects exactly one facet of $N$, namely $G_1$.
Thus $W=(N\setminus G_1)\cup H$ is a $\nu$-matching. It remains to show that $H\setminus G_1$ contains a vertex in $V_M\setminus V_N$.

If $|F_1\cap G_1|=1$, say $F_1\cap G_1=\{s\}$.
Since $|F_1|,|G_1|\ge3$ due to (\ref{eq:1-dim}), Remark~\ref{rem=deg2}(2) implies
$\deg_\T(s)=2$, so $H=\N_\T[s]$.
Hence $H\setminus G_1$ is a singleton and
$H\setminus G_1\subseteq F_1\setminus G_1$.
Because $F_1$ intersects no facet of $N$ other than $G_1$,
we have $F_1\setminus G_1\subseteq V_M\setminus V_N$,
which proves the claim.

It remains to show that $|F_1\cap G_1|=2$ cannot occur.
Suppose $|F_1\cap G_1|=2$. Then there exists a path $i_1,i_2,i_3,i_4$ in $\T$ such that $F_1=\N_\T[i_2]$ and $G_1=\N_\T[i_3]$, and either $H=\N_\T[i_1]$ or $H=\N_\T[i_4]$. First assume $H=\N_\T[i_1]$. Since $n=\nu-1$, $F$ does not intersect any facet of $U$ other than $H$, so we have $i_3\notin V_U$.
Because $\N_\T[i_1]\cap \N_\T[i_3]=\{i_2\}$ and
$H$ intersects no facet of $N$ other than $G_1=\N_\T[i_3]$,
Remark~\ref{rem=deg2}(2) applied to $N$ and $U$
yields $\deg_\T(i_2)=2$.
Since $i_3\notin V_U$, the maximality of $U$ implies
$\N_\T[i_1]>\N_\T[i_2]$, otherwise
$(U\setminus \N_\T[i_1])\cup \N_\T[i_2]>_{\lex}U$.
Consequently $|\N\T[i_1]|=3$, and level of $i_1$ is greater than level of $i_2$ in $\T$. Therefore
$\N_\T[i_2]>\N_\T[i_3]$, contradicting~\eqref{con.H2} and hence $H\neq\N_\T[i_1]$.
Then $H=\N_\T[i_4]$. By a symmetric argument we obtain
$\N_\T[i_2]<\N_\T[i_3]$. Since $M\setminus \N_\T[i_2]\neq N\setminus \N_\T[i_3]$
(otherwise $N>_{\ell} M$), applying Remark~\ref{Rem.intersection} to $M'=(M\setminus \N_\T[i_2])\cup{\N_\T[i_3]}$ and $N$ contradicts~\eqref{eq:two-intersections}.

Assume that the assertion holds for all $n\ge k+1$, where $k\le \nu-2$, and consider the case $n=k$.
Write
\[
M=\{F_1,\ldots,F_{\nu-k},F'_1,\ldots,F'_k\},
\]
where $F'_p\in U$ for $p=1,\ldots,k$ and $F_q\notin U$ for
$q=1,\ldots,\nu-k$.
Recall that, by Remark~\ref{Rem.intersection}, $F_1$ intersects exactly one facet of $N$, namely $G_1$. Hence $(N\setminus G_1)\cup F_1$ is a $\nu$-matching of
$\mathcal N(\T)$. Applying Remark~\ref{Rem.intersection} to $M$ and the resulting matching,
and repeating this procedure whenever possible, we obtain a
$\nu$-matching $N^\ast$ such that
\begin{enumerate}[label=(\alph*)]
\item $\bigl|\{F_1,\ldots,F_{\nu-k}\}\cap N^\ast\bigr|=\nu-k-1$;
\item for every $F'\in M\cap U$, either $F'\in N^\ast$ or
$\bigl|\{G\in N^\ast:\;F'\cap G\neq\emptyset\}\bigr|\ge 2$.
\end{enumerate}
Applying Remark~\ref{Rem.intersection} once more to $M$ and $N^\ast$,
and relabeling if necessary, we may assume that
$F_{\nu-k}\cap G_2\neq\emptyset$ and
$F_{\nu-k}$ does not intersect any other facet of $N^\ast$.

If $G_2\in U$, then $G_2$ does not intersect any facet of $M$
other than $F_{\nu-k}$. Hence $M'=(M\setminus F_{\nu-k})\cup G_2$ is a $\nu$-matching of $\mathcal N(\T)$ with $|M'\cap U|=k+1$.
Since $M'$ and $N$ satisfy conditions \eqref{con.H1} and \eqref{con.H2}, the induction
hypothesis yields the desired conclusion because $V_{M'}\setminus V_N\subseteq V_M\setminus V_N$.

Now, suppose that $G_2\notin U$.  By Lemma~\ref{lem.ui} there exists $H'\in U$ such that
$H'\cap F_{\nu-k}\neq\emptyset$ and $H'\cap G_2\neq\emptyset$. We distinguish the following two cases.

\textit{Case 1.} Suppose $|F_{\nu-k}\cap G_2|=1$.
Set $F_{\nu-k}=\N_\T[i_1]$, $G_2=\N_\T[i_3]$, and
$F_{\nu-k}\cap G_2=\{i_2\}$, where $i_1,i_2,i_3$ form a path in $\T$.
Since $|F_{\nu-k}|,|G_2|\ge 3$ by (\ref{eq:1-dim}), applying
Remark~\ref{rem=deg2}(2) to $M$ and $N^\ast$ yields
$\deg_\T(i_2)=2$. Hence $H'=\N_\T[i_2]=\{i_1,i_2,i_3\}\in U$, and it does not intersect any facet
in $M\cap U$. Moreover, as $i_1,i_2\in F_{\nu-k}$ and $i_3\in G_2$,
the facet $\N_\T[i_2]$ does not intersect any facet in $N^\ast\cap U$.
Thus $M''=(M\setminus F_{\nu-k})\cup H'$ is a $\nu$-matching with $|M''\cap U|=k+1$, $V_{M''}\setminus V_N\subseteq V_M\setminus V_N$. Moreover, $M''$ and $N$ satisfy conditions \eqref{con.H1} and \eqref{con.H2}.  Thus, the conclusion follows from the induction hypothesis on $n$.
    
\textit{Case 2.} Suppose $|F_{\nu-k}\cap G_2|=2$.
Set $F_{\nu-k}=\N_\T[i_2]$, $G_2=\N_\T[i_3]$, and choose
$i_1\in F_{\nu-k}\setminus G_2$ and $i_4\in G_2\setminus F_{\nu-k}$,
where $i_1,i_2,i_3,i_4$ form a path in $\T$. Then either $H'=\N_\T[i_1]$ or $H'=\N_\T[i_4]$.
Since $\T$ satisfies~\eqref{C2}, either $|\N_\T[i_2]|=3$ or
$|\N_\T[i_3]|=3$. We first note that $\N_\T[i_2]<\N_\T[i_3]$.
Otherwise $|\N_\T[i_2]|=3$ and $\N_\T[i_2]\setminus\N_\T[i_3]=\{i_1\}$ with $i_1\notin V_N$, contradicting \eqref{con.H2}. Hence $|\N_\T[i_3]|=3$.

Suppose that $\N_\T[i_4]\notin U$.
Then $H'=\N_\T[i_1]$.
Let $\T_1$ and $\T_2$ be the connected components of
$\T\setminus i_3$ with $i_2\in V(\T_1)$ and $i_4\in V(\T_2)$.
Since $\N_\T[i_2],\N_\T[i_3],\N_\T[i_4]\notin U$, we have $i_3\notin V_U$. We claim that $|U_{\T_1}|=|N_{\T_1}|+1$.
Let $\Sc_1,\ldots,\Sc_r$ be the connected components of $\T\setminus i_2$,
with $i_3\in\Sc_1$.
Since $i_3\notin V_U$, Lemma~\ref{lemma1.redu}(iii) gives
$|U_{\Sc_1}|=|M_{\Sc_1}|$.
Moreover, $i_2\in V_{N^\ast}$ and
$(\N_\T[i_2]\setminus\N_\T[i_3])\cap V_{N^\ast}=\emptyset$,
so Lemma~\ref{lemma1.redu}(ii) implies
$|N_{\Sc_1}|=|M_{\Sc_1}|$.
Hence $|U_{\Sc_1}|=|N_{\Sc_1}|$, which yields
$|U_{\T_2}|=|N_{\T_2}|$.
Applying Lemma~\ref{lemma1.redu}(iii) to the decomposition
$\T\setminus i_3$ now gives
$|U_{\T_1}|=|N_{\T_1}|+1$. By Remark~\ref{rem=deg2}(1) this gives $\deg_\T(i_2)=2$,
and hence $(U\setminus\N_\T[i_1])\cup\N_\T[i_2]>U,$ contradicting the maximality of $U$.
Thus, this case cannot occur.

Therefore $\N_\T[i_4]\in U$. Since $i_4\in G_2\in N^\ast$, it does not lie in $F_1,\ldots,F_{\nu-k-1}$, and because
$\N_\T[i_4]\cap F_{\nu-k}\neq\emptyset$,
we have $\N_\T[i_4]\notin M\cap U$.
Hence $i_4$ does not belong to any facet of $M\cap U$. Thus, $i_4\notin V_M$, and hence $\bar M=(M\setminus F_{\nu-k})\cup G_2 $ is a $\nu$-matching with $F_{\nu-k}>G_2$ and $F_{\nu-k}\notin U$, contradicting our assumption on $M$. This shows that this case cannot occur.
\end{proof}

We are now ready to prove the sufficient condition for the $\nu$-th squarefree power of the closed neighborhood ideal of a tree to be componentwise linear.

\begin{theorem}\label{thm.3}
Let $\T$ be a tree satisfying \eqref{C1} and \eqref{C2}, and set $J=NI(\T)$.
Then $J^{[\nu]}$ has linear quotients and is componentwise linear.
\end{theorem}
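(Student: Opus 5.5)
We order the minimal generators of $J^{[\nu]}$ decreasingly with respect to $>_\ell$ (Notation~\ref{notation.total}), via the bijection of Corollary~\ref{cor.unique}, and show that this order gives linear quotients. Componentwise linearity then follows from \cite[Theorem 8.2.15]{HH2011}. Write $u_M={\bf x}_{V_M}$ for the generator attached to a $\nu$-matching $M$. Since the generators are squarefree monomials, it is enough to verify the standard exchange criterion. For any two $\nu$-matchings $M>_\ell N$, we must find a $\nu$-matching $W>_\ell N$ and a vertex $x$ such that
\[
u_W/\gcd(u_W,u_N)=x_x \quad\text{and}\quad x\in V_M\setminus V_N .
\]
Then $x_x$ divides $u_M/\gcd(u_M,u_N)$, so the colon ideal $(u_P : P>_\ell N):u_N$ is generated by variables.

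The argument splits into two cases, governed by condition \eqref{con.H2} of Proposition~\ref{lemma.H}.

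\emph{Case 1: \eqref{con.H2} fails.} Then there are facets $F\in M$ and $G\in N$ with $F>G$, $F\setminus G=\{x\}$ and $x\notin V_N$. Put $W=(N\setminus G)\cup F$. This is a $\nu$-matching, since $F\cap G'=\emptyset$ for all $G'\in N\setminus G$: indeed $F\setminus G\not\subseteq V_N$ and $F\cap G\subseteq G$. Moreover $V_W\setminus V_N=\{x\}$ and $x\in V_M\setminus V_N$. It remains to show $W>_\ell N$. If $|F|=2<|G|$, then $\beta(W)>\beta(N)$. If $|F|=|G|$ and one of them lies in $U$, we must compare $\level_U$. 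If $F\in U$, then $\level_U(W)<\level_U(N)$. If neither lies in $U$, the levels agree and $W>_{\lex}N$ because $F>G$. The delicate subcase is $G\in U$ and $F\notin U$, where $\level_U$ increases. I expect to rule this out, or to reroute, using Proposition~\ref{prop:level}: $F\cap G$ meets $G$ in all but one vertex, so after swapping roles, Proposition~\ref{prop:level} (or Proposition~\ref{prop:4face} when $|G|\ge 4$) should force $G>F$, contradicting $F>G$.

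\emph{Case 2: \eqref{con.H2} holds.} Then Proposition~\ref{lemma.H} directly gives $H\in U$, $G\in N$ and $x\in H\setminus G$ with $x\in V_M\setminus V_N$, such that $W=(N\setminus G)\cup H$ is a $\nu$-matching. We need two further facts:
\begin{enumerate}
\item[(a)] $V_W\setminus V_N=\{x\}$, i.e.\ $H\setminus G$ is a singleton;
\item[(b)] $W>_\ell N$.
\end{enumerate}
For (a), I would inspect the proof of Proposition~\ref{lemma.H}: in the terminal step, $H=\N_\T[s]$ with $\deg_\T(s)=2$ and $H\setminus G_1\subseteq F_1\setminus G_1$, so $H\setminus G$ is a singleton. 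If needed, I would strengthen the statement of the proposition to record this. For (b), since $H\in U$ and $G\notin U$ (otherwise $H=G$), we get $|W\cap U|=|N\cap U|+1$, so $\level_U(W)<\level_U(N)$. We still need $\beta(W)\ge\beta(N)$. This holds because $|H|=|G|=3$ in the terminal configuration. Alternatively, one can argue via \eqref{eq:1-dim}: all two-element facets of $N$ agree with those of $M$ after normalization, so $G$ is not a two-element facet.

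The main obstacle is the mismatch between the exchange criterion, which needs a one-variable difference together with $W>_\ell N$, and the order $>_\ell$, which mixes $\beta$, $\level_U$ and $>_{\lex}$. Concretely, the hard parts are the subcase of Case~1 where $G\in U$, and the verification in Case~2 that $H\setminus G$ is a single vertex. For both I would first try to extract the degree-$2$ path structure forced by \eqref{C1} and \eqref{C2} via Remark~\ref{rem=deg2}, exactly as in Propositions~\ref{prop:level} and~\ref{lemma.H}.
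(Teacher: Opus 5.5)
Your skeleton matches the paper's: the order $>_\ell$, the one-variable exchange criterion, the split according to \eqref{con.H2}, and Proposition~\ref{lemma.H}. However, step (a) of your Case~2 is false, and this is exactly where the paper needs an extra construction.

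Proposition~\ref{lemma.H} can output $H\in M\cap U$. It does so whenever a facet of $M\cap U$ meets exactly one facet of $N$. The degree-$2$ observation you quote from Step~1 of its proof only covers the branch $H\notin M$. When $H\in M$ and $|H\cap G|=1$, one has $|H|=3$ and $|H\setminus G|=2$. Concretely, take the path $1,2,\ldots,9$ rooted at $1$, so that $U=\{\N[1],\N[4],\N[9]\}$. Set $M=U$ and $N=\{\N[1],\N[6],\N[9]\}$. Then \eqref{con.H2} holds, and the only possible output is $H=\{3,4,5\}$, $G=\{5,6,7\}$, which gives $u_W/\gcd(u_W,u_N)=x_3x_4$.

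The paper repairs this in Case B.II of its proof:
\begin{itemize}
\item Write $H=\N_\T[i_1]$, $G=\N_\T[i_3]$ and $H\cap G=\{i_2\}$. Condition \eqref{C1} forces $\deg_\T(i_2)=2$.
\item Replace $G$ by $\N_\T[i_2]$ rather than by $H$, giving $W'=(N\setminus\N_\T[i_3])\cup\N_\T[i_2]$.
\item $W'$ adds the single vertex $\N_\T[i_2]\setminus\N_\T[i_3]$, which lies in $H\setminus G\subseteq V_M\setminus V_N$.
\item $W'$ has the same $\beta$ and the same $U$-level as $N$, and $W'>_{\lex}N$ by size, or by Proposition~\ref{prop:level} when $|G|=3$.
\end{itemize}
The last point is precisely the $|F\cap G|=1$ situation that Proposition~\ref{prop:level} is built for. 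In the path example, $W'=\{\N[1],\N[5],\N[9]\}$, which gives $x_4$. Your proposal does not contain this idea.

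Your intended fix for Case~1 also fails. In the subcase $G\in U$, $F\notin U$, Proposition~\ref{prop:4face} and the maximality of $U$ force $|F|=|G|=3$, hence $|F\cap G|=2$. So Proposition~\ref{prop:level} does not apply, and the subcase cannot be ruled out. Here is an instance, writing $\N[v]$ for $\N_\T[v]$.
\begin{itemize}
\item Let $\T$ be the spider with centre $x$ and legs $x,\ell_1,\dots,\ell_6$ (root $\ell_6$), $x,m_1,\dots,m_5$ and $x,n_1,\dots,n_5$, with $\N[m_1]>\N[n_1]$.
\item It satisfies \eqref{C1} and \eqref{C2}, $\nu=6$, and $U=\{\N[\ell_6],\N[\ell_3],\N[m_1],\N[m_5],\N[n_2],\N[n_5]\}$.
\item Put $M=\{\N[\ell_6],\N[\ell_3],\N[m_2],\N[m_5],\N[n_1],\N[n_5]\}$ and $N=\{\N[\ell_6],\N[\ell_2],\N[m_2],\N[m_5],\N[n_2],\N[n_5]\}$. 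Both have $\beta=3$ and $U$-level $2$, and $M>_{\lex}N$ because $\N[\ell_3]>\N[\ell_2]$, so $M>_\ell N$.
\item The pair $F=\N[n_1]$, $G=\N[n_2]\in U$ violates \eqref{con.H2}. Yet $(N\setminus G)\cup F$ has $U$-level $3$, so it lies below $N$ in $>_\ell$.
\end{itemize}
So the violating pair must be chosen with care; here $(\N[\ell_3],\N[\ell_2])$ works, but proving that a good pair always exists takes an argument. The paper's proof does not address this either, since it infers $W>_\ell N$ from $F>G$ alone. On this point you are no worse off than the paper, but the repair you propose is not available.
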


\begin{proof}
Let $U$ be the unique $\nu$-matching maximal with respect to $>_{\lex}$. 
Using Notation~\ref{notation.total}, we order the generators of $\G(J^{[\nu]})$ as 
$u_1>_\ell \cdots >_\ell u_n$. 
We prove that $J^{[\nu]}$ has linear quotients with respect to this order. Let $M$ and $N$ be the $\nu$-matchings corresponding to $u_a$ and $u_b$, respectively, with $M>_\ell N$. 
By \cite[Corollary 8.2.4]{HH2011}, it suffices to show that there exists a $\nu$-matching $W$ with 
$W>_\ell N$ such that $V_W\setminus V_N$ is a singleton and 
$V_W\setminus V_N\subseteq V_M\setminus V_N$.

Since $M>_\ell N$, we have $\beta(M)\ge\beta(N)$. 
If $\beta(M)>\beta(N)$, then there exist facets $F\in M$ and $G\in N$ with $|F|=2$ and $|G|\ge3$ such that $F$ intersects only $G$ among the facets of $N$. 
Hence $W=(N\setminus G)\cup F$ is a $\nu$-matching. 
Since $|F|=2$, we have $\beta(W)>\beta(N)$ and thus $W>_\ell N$. 
Moreover $V_W\setminus V_N$ is a singleton and 
$V_W\setminus V_N\subseteq V_M\setminus V_N$, as required.

We may therefore assume $\beta(M)=\beta(N)$. 
If there exist facets $F\in M$ and $G\in N$ with $F>G$ such that 
$F\setminus G$ is a singleton and $F\setminus G\notin V_N$, that is, $F$ intersects no facet of $N$ other than $G$. 
Thus $W=(N\setminus G)\cup F$ is a $\nu$-matching, and since $F>G$ we obtain $W>_\ell N$. 
As $F\setminus G$ is a singleton, the required condition holds.

Hence we may assume that no such pair of facets exists. 
By Proposition~\ref{lemma.H}, there exist facets $H\in U$ and $G\in N$, and a vertex $x\in H\setminus G$, such that 
$W=(N\setminus G)\cup H$ is a $\nu$-matching and $x\in V_M\setminus V_N$. 
In particular, $\level_U(W)=\level_U(N)-1$, and therefore $W>_\ell N$. 
There are two possibilities for $H$: either $H\in M$ or $H\notin M$.

\textit{Case A.} Suppose that $H\notin M$. 
As shown in the proof of Proposition~\ref{lemma.H}, the set $H\setminus G$ is a singleton and $W$ satisfies the required conditions.

\textit{Case B.} Suppose that $H\in M$. 
We distinguish two subcases according to $|H\cap G|$.

\textit{Case B.I.} Assume $|H \cap G|=2$. 
By Proposition~\ref{prop:4face}, $|H|\le 3$, and $|H|=2$ is impossible since $H$ and $G$ are distinct facets of $\cN(\T)$. 
Hence $|H|=3$. 
Thus $|H\setminus G|=1$, so $V_W\setminus V_N$ is a singleton. 
Since $H\in M$, we have $V_W\setminus V_N \subseteq V_M\setminus V_N$.

\textit{Case B.II.} Assume that $|H \cap G|=1$. 
By Proposition~\ref{prop:4face}, we have $|H|\le 3$. 
If $|H|=2$, then $H\setminus G$ is a singleton and $H\setminus G\notin V_N$, 
contradicting our assumption. Hence $|H|=3$. 
If $|G|=2$, then $(U\setminus H)\cup G>_{\lex}U$, contradicting the maximality of $U$. 
Thus $|G|\ge 3$.

Set $H=\N_\T[i_1]$ and $G=\N_\T[i_3]$ with 
$H\cap G=\{i_2\}$, where $i_1,i_2,i_3$ form a path in $\T$. 
Since $W$ and $N$ differ in one facet, Condition~\eqref{C1} implies 
$\deg_\T(i_2)=2$. Consider
\[
W'=(N\setminus \N_\T[i_3])\cup \N_\T[i_2].
\]
Since $H\setminus G\notin V_N$, we obtain $\N_\T[i_1]\cap V_{N\setminus \N_\T[i_3]}=\emptyset$. Also, 
$\N_\T[i_2]\subset V_{\{\N_\T[i_1],\N_\T[i_3]\}}$ gives
$\N_\T[i_2]\cap V_{N\setminus \N_\T[i_3]}=\emptyset$. 
Thus $W'$ is a $\nu$-matching of $\cN(\T)$, and clearly 
$\level(W')=\level(N)$.

We claim that $W'>_{\lex}N$. 
Once this is established, then $W'>_{\ell} N$, and the fact that
$|\N_\T[i_2]\setminus \N_\T[i_3]|=1$ implies that 
$V_{W'}\setminus V_N$ is a singleton. Since 
$\N_\T[i_2]\setminus \N_\T[i_3]\subseteq 
\N_\T[i_1]\setminus \N_\T[i_3]\in V_M$, we obtain $V_{W'}\setminus V_N \subseteq V_W\setminus V_N 
\subseteq V_M\setminus V_N,$ as required.

Now we prove the claim. Write $\N_\T[i_2]=F_{i,j,k}$ and $\N_\T[i_3]=F_{i',j',k'}$
(see Notation~\ref{Notation.facet}). Since 
$i=|\N_\T[i_2]|=3$, we have $i\le i'$. 
If $i<i'$, then $\N_\T[i_2]>\N_\T[i_3]$, and hence 
$W'>_{\lex}N$, as required. Now suppose $i=i'=3$. By Proposition~\ref{prop:level}, the level of $i_1$ in $\T$ is strictly less than that of $i_3$. Since $i_2$ lies on the path 
between $i_1$ and $i_3$, we obtain $j<j'$, and hence $W'>_{\lex}N$, as required. 
\end{proof}

We conclude this section by presenting a class of graphs whose highest non-vanishing squarefree power of the closed neighborhood ideal is componentwise linear. 

\begin{corollary}
    If $G$ is a path graph or a whiskered path graph (that is, a graph obtained by attaching a whicker to every vertex of a path graph), then the ideal $NI(G)^{[\nu]}$ is componentwise linear. 
\end{corollary}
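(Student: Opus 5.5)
By Theorem~\ref{thm.3}, it is enough to show that path graphs and whiskered path graphs satisfy Conditions~\eqref{C1} and~\eqref{C2}; linear quotients and componentwise linearity then follow. I would handle the two families separately. In each case the argument is structural: I will show that the configurations forbidden in \eqref{C1} and \eqref{C2} cannot appear.

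For a path $P_n$, every vertex has degree at most $2$. Condition~\eqref{C1} forbids a path $r_1,r,s,t,t_1$ with $\deg(s)\ge 3$. Condition~\eqref{C2} forbids a path with endpoints $r,s$ satisfying $\deg(r),\deg(s)\ge 3$. Neither configuration can occur in $P_n$, so both conditions hold vacuously. The trivial cases $n\le 2$, where $NI(G)^{[\nu]}$ is principal, can be absorbed here as well.

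For a whiskered path $W_n$, let the path be $v_1,\ldots,v_n$ and the whiskers $w_i$ with $\{v_i,w_i\}\in E$. The facets of $\cN(W_n)$ are the sets $\{v_i,w_i\}$, since $N[w_i]=\{v_i,w_i\}\subseteq N[v_i]$ and the larger set $N[v_i]$ is therefore not minimal. Hence $NI(W_n)=(x_{v_i}x_{w_i}: 1\le i\le n)$. These generators have pairwise disjoint supports, so $\nu=n$ and $NI(W_n)^{[n]}$ is the principal ideal generated by $\prod_i x_{v_i}x_{w_i}$. A principal ideal trivially has linear quotients. For consistency with Theorem~\ref{thm.3}, I would also verify the conditions directly. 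There is a unique $\nu$-matching, so the two distinct $\nu$-matchings required in \eqref{C1} and \eqref{C2} cannot exist. Moreover, $\N[r]$ for a non-leaf $r$ is never a facet, so the matchings in \eqref{C1} and \eqref{C2} could not even be formed.

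The only step needing real care is the identification of the facets of $\cN(W_n)$, specifically confirming that no $N[v_i]$ survives as a facet. This follows immediately from $N[w_i]\subsetneq N[v_i]$ together with the definition of $\cN(G)$, which keeps only the minimal closed neighborhoods. Beyond this point, the result reduces directly to Theorem~\ref{thm.3}.
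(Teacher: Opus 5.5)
Your proposal is correct and follows the route the paper intends: the corollary is stated without a separate proof as a consequence of Theorem~\ref{thm.3}, and for paths Conditions~\eqref{C1} and~\eqref{C2} hold vacuously because no vertex has degree at least $3$. For whiskered paths, your observation that the only facets of $\cN(G)$ are the whisker edges makes $NI(G)^{[\nu]}$ principal, which settles that case directly, and your separate check of \eqref{C1} and \eqref{C2} there is consistent with this.
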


\section{Regularity of the highest squarefree power} \label{sec.regularity}

In this section, we demonstrate that if either Condition~\eqref{C1} or \eqref{C2} is not satisfied, then one can construct a tree $G$ for which 
$\reg(NI(G)^{[\nu]})$ is arbitrarily larger than $\deg(NI(G)^{[\nu]})$. This provides a positive answer to Question \ref{qus.regm}. We begin by defining a class of trees that does not satisfy Condition \eqref{C2}.

\begin{example} \label{ex.reg}
Let $G_1$ be the graph defined in Figure \ref{fig:graphG} with $n \geq 1$. 
It is easy to verify that the matching number of $\cN({G_1})$ is $2(n+1)+1$. Moreover, $$M_1=\bigcup_{i=1}^{n+1}\{\N_{G_1}[r''_i],\N_{G_1}[s''_i]\} \cup \{\N_{G_1}[r]\}$$ and $$M_2=\bigcup_{i=1}^{n+1}\{\N_{G_1}[r''_i],\N_{G_1}[s''_i]\} \cup \{\N_{G_1}[s]\}$$ are the only $2n+3$-matchings of $\cN({G_1})$. 
Furthermore, $G_1$ satisfies Condition \eqref{C1}, but does not satisfy Condition \eqref{C2}.

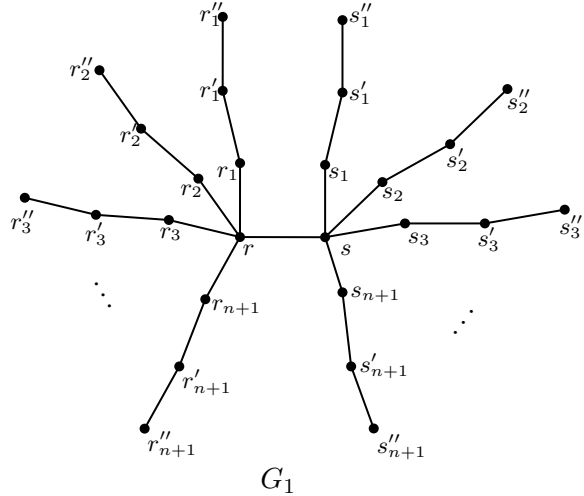
\begin{figure}[ht]
    \centering
    \tikzset{every picture/.style={line width=0.75pt}} 

\begin{tikzpicture}[x=0.65pt,y=0.65pt,yscale=-1,xscale=1]

\draw    (200.03,829.43) -- (200,872.33) ;
\draw    (176.03,838.43) -- (200,872.33) ;
\draw    (159.03,862.43) -- (200,872.33) ;
\draw    (180.03,908.43) -- (200,872.33) ;
\draw    (200,872.33) -- (249.03,872.43) ;
\draw    (249.03,872.43) -- (249.03,830.43) ;
\draw    (249.03,872.43) -- (282.03,840.43) ;
\draw    (249.03,872.43) -- (295.03,864.43) ;
\draw    (249.03,872.43) -- (259.03,904.43) ;
\draw    (190.03,787.43) -- (200.03,829.43) ;
\draw    (190.07,744.53) -- (190.03,787.43) ;
\draw    (143.03,809.43) -- (176.03,838.43) ;
\draw    (119.07,775.53) -- (143.03,809.43) ;
\draw    (117.03,859.43) -- (159.03,862.43) ;
\draw    (76.07,849.53) -- (117.03,859.43) ;
\draw    (165.03,947.43) -- (180.03,908.43) ;
\draw    (145.07,983.53) -- (165.03,947.43) ;
\draw    (249.03,830.43) -- (259.03,788.43) ;
\draw    (259.03,788.43) -- (259.03,746.43) ;
\draw    (282.03,840.43) -- (321.03,818.43) ;
\draw    (321.03,818.43) -- (354.03,786.43) ;
\draw    (295.03,864.43) -- (341.03,864.43) ;
\draw    (341.03,864.43) -- (387.03,856.43) ;
\draw    (259.03,904.43) -- (264.03,947.43) ;
\draw    (264.03,947.43) -- (277.03,983.43) ;

\draw (200,875) node [anchor=north west][inner sep=0.75pt]  [font=\footnotesize]  {$r$};
\draw (256,875) node [anchor=north west][inner sep=0.75pt]  [font=\footnotesize]  {$s$};
\draw (185,830) node [anchor=north west][inner sep=0.75pt]  [font=\footnotesize]  {$r_{1}$};
\draw (164.03,840) node [anchor=north west][inner sep=0.75pt]  [font=\footnotesize]  {$r_{2}$};
\draw (152.03,865) node [anchor=north west][inner sep=0.75pt]  [font=\footnotesize]  {$r_{3}$};
\draw (182,907) node [anchor=north west][inner sep=0.75pt]  [font=\footnotesize]  {$r_{n+1}$};
\draw (249.03,830.43) node [anchor=north west][inner sep=0.75pt]  [font=\footnotesize]  {$s_{1}$};
\draw (280,842) node [anchor=north west][inner sep=0.75pt]  [font=\footnotesize]  {$s_{2}$};
\draw (295,868) node [anchor=north west][inner sep=0.75pt]  [font=\footnotesize]  {$s_{3}$};
\draw (262,899.43) node [anchor=north west][inner sep=0.75pt]  [font=\footnotesize]  {$s_{n+1}$};
\draw (175.03,778.43) node [anchor=north west][inner sep=0.75pt]  [font=\footnotesize]  {$r'_{1}$};
\draw (175,734.43) node [anchor=north west][inner sep=0.75pt]  [font=\footnotesize]  {$r''_{1}$};
\draw (129.03,804.43) node [anchor=north west][inner sep=0.75pt]  [font=\footnotesize]  {$r'_{2}$};
\draw (100,766) node [anchor=north west][inner sep=0.75pt]  [font=\footnotesize]  {$r''_{2}$};
\draw (109.03,862) node [anchor=north west][inner sep=0.75pt]  [font=\footnotesize]  {$r'_{3}$};
\draw (66.03,855) node [anchor=north west][inner sep=0.75pt]  [font=\footnotesize]  {$r''_{3}$};
\draw (165,947.43) node [anchor=north west][inner sep=0.75pt]  [font=\footnotesize]  {$r'_{n+1}$};
\draw (145.07,983.53) node [anchor=north west][inner sep=0.75pt]  [font=\footnotesize]  {$r''_{n+1}$};
\draw (261.03,780) node [anchor=north west][inner sep=0.75pt]  [font=\footnotesize]  {$s'_{1}$};
\draw (261,735) node [anchor=north west][inner sep=0.75pt]  [font=\footnotesize]  {$s''_{1}$};
\draw (317.03,819) node [anchor=north west][inner sep=0.75pt]  [font=\footnotesize]  {$s'_{2}$};
\draw (352.03,785) node [anchor=north west][inner sep=0.75pt]  [font=\footnotesize]  {$s''_{2}$};
\draw (335.03,865) node [anchor=north west][inner sep=0.75pt]  [font=\footnotesize]  {$s'_{3}$};
\draw (382.03,858) node [anchor=north west][inner sep=0.75pt]  [font=\footnotesize]  {$s''_{3}$};
\draw (267.03,938) node [anchor=north west][inner sep=0.75pt]  [font=\footnotesize]  {$s'_{n+1}$};
\draw (277.03,985) node [anchor=north west][inner sep=0.75pt]  [font=\footnotesize]  {$s''_{n+1}$};
\draw (119.68,893.51) node [anchor=north west][inner sep=0.75pt]  [rotate=-59.76,xslant=0]  {$\cdots $};
\draw (317.73,927.8) node [anchor=north west][inner sep=0.75pt]  [rotate=-307.35,xslant=0]  {$\cdots $};

\draw (210,1005) node [anchor=north west][inner sep=0.75pt]    {$G_1$};

\filldraw[black] (200.03,829.43)  circle (1.5pt) ;
\filldraw[black] (200,872.33) circle (1.5pt) ;
\filldraw[black] (176.03,838.43) circle (1.5pt) ;
\filldraw[black] (159.03,862.43)  circle (1.5pt) ;
\filldraw[black] (180.03,908.43) circle (1.5pt) ;
\filldraw[black] (249.03,872.43) circle (1.5pt) ;\filldraw[black] (249.03,830.43)  circle (1.5pt) ;
\filldraw[black] (282.03,840.43) circle (1.5pt) ;
\filldraw[black] (295.03,864.43) circle (1.5pt) ;
\filldraw[black] (259.03,904.43) circle (1.5pt) ;
\filldraw[black] (190.03,787.43) circle (1.5pt) ;
\filldraw[black] (190.07,744.53) circle (1.5pt) ;
\filldraw[black] (143.03,809.43) circle (1.5pt) ;
\filldraw[black] (119.07,775.53) circle (1.5pt) ;
\filldraw[black] (117.03,859.43) circle (1.5pt) ;
\filldraw[black] (76.07,849.53) circle (1.5pt) ;
\filldraw[black] (165.03,947.43) circle (1.5pt) ;
\filldraw[black] (145.07,983.53) circle (1.5pt) ;
\filldraw[black] (259.03,788.43) circle (1.5pt) ;
\filldraw[black] (259.03,746.43) circle (1.5pt) ;
\filldraw[black] (321.03,818.43) circle (1.5pt) ;
\filldraw[black] (354.03,786.43) circle (1.5pt) ;
\filldraw[black] (341.03,864.43) circle (1.5pt) ;
\filldraw[black] (387.03,856.43) circle (1.5pt) ;
\filldraw[black] (264.03,947.43) circle (1.5pt) ;
\filldraw[black] (277.03,983.43) circle (1.5pt) ;
\end{tikzpicture}
    \caption{The graph $G_1$.}
    \label{fig:graphG}
\end{figure}    
\end{example}

\begin{theorem} \label{thm.regm}
    Let $m$ be a positive integer. Then there exists a tree $G$ that satisfies Condition \eqref{C1} but fails to satisfy Condition \eqref{C2}, such that $\reg(NI(G)^{[\nu]})-\deg(NI(G)^{[\nu]})=m$.
\end{theorem}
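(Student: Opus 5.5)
The plan is to take $G=G_1$ from Example~\ref{ex.reg} with $n=m$, and to compute $\reg(NI(G_1)^{[\nu]})$ directly. The point is that this ideal has only two minimal generators. By Example~\ref{ex.reg}, $G_1$ satisfies \eqref{C1} but not \eqref{C2}. The same example gives $\nu=2n+3$, and says that $M_1$ and $M_2$ are the only $\nu$-matchings of $\cN(G_1)$. By Corollary~\ref{cor.unique}, $J^{[\nu]}$ (with $J=NI(G_1)$) is therefore generated by exactly two monomials:
\[
u=P\,{\bf x}_{\N_{G_1}[r]},\qquad v=P\,{\bf x}_{\N_{G_1}[s]},\qquad P=\prod_{i=1}^{n+1}{\bf x}_{\N_{G_1}[r''_i]}\,{\bf x}_{\N_{G_1}[s''_i]}.
\]
I would briefly double-check the facts from the example that are used here. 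The facets $\N_{G_1}[r''_i]=\{r'_i,r''_i\}$ and $\N_{G_1}[s''_i]=\{s'_i,s''_i\}$ have size $2$ and are pairwise disjoint. They are also disjoint from $\N_{G_1}[r]=\{r,s,r_1,\dots,r_{n+1}\}$ and from $\N_{G_1}[s]=\{s,r,s_1,\dots,s_{n+1}\}$.

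Next I compute degrees. We have $\deg P=4(n+1)$ and $|\N_{G_1}[r]|=|\N_{G_1}[s]|=n+3$, so
\[
\deg u=\deg v=4(n+1)+n+3=5n+7=\deg(J^{[\nu]}).
\]
Since $\N_{G_1}[r]\cap\N_{G_1}[s]=\{r,s\}$, the union $\N_{G_1}[r]\cup\N_{G_1}[s]$ has $2n+4$ elements. Hence $\deg\lcm(u,v)=4(n+1)+2n+4=6n+8$.

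For an ideal generated by two monomials $u,v$, the Taylor complex is a minimal free resolution:
\[
0\to S(-\deg\lcm(u,v))\to S(-\deg u)\oplus S(-\deg v)\to (u,v)\to 0 .
\]
It is minimal because $\lcm(u,v)$ differs from both $u$ and $v$. Reading off the shifts gives
\[
\reg(J^{[\nu]})=\max\{5n+7,\;(6n+8)-1\}=6n+7.
\]
Therefore $\reg(J^{[\nu]})-\deg(J^{[\nu]})=n=m$, which is the claim.

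The main obstacle is the input from Example~\ref{ex.reg}: that $\nu(\cN(G_1))=2n+3$ and that $M_1,M_2$ are the only maximum matchings. These facts are stated there as easy to verify. If I needed to justify them, I would argue as follows. Each of the $2(n+1)$ pendant branches $r_i,r'_i,r''_i$ and $s_i,s'_i,s''_i$ contributes at most one facet to a matching. The center $\{r,s\}$ can contribute only one facet, because all facets meeting it pairwise intersect in $r$ or $s$. Finally, a branch facet other than $\N[r''_i]$ or $\N[s''_i]$ would conflict with the central facet, and this forces the two listed matchings.
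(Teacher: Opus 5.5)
Your argument is correct and is essentially the paper's. The paper also takes $n=m$ and $NI(G_1)^{[\nu]}=({\bf x}_{V_{M_1}},{\bf x}_{V_{M_2}})$. It then runs the short exact sequence with colon ideal $({\bf x}_{V_{M_1}}):{\bf x}_{V_{M_2}}=({\bf x}_{\{r_1,\dots,r_{m+1}\}})$ through Lemma~\ref{Lemma1.Reg}. You instead read off the Taylor resolution of the two-generated ideal, which is the mapping cone of that same sequence. Your values $\deg=5m+7$ and $\reg=6m+7$ agree with the paper's.

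One correction to your optional verification of the matching facts, which the paper itself only asserts as easy to verify. The claim that all facets meeting $\{r,s\}$ pairwise intersect is false. For example, $\N_{G_1}[r_i]=\{r,r_i,r'_i\}$ and $\N_{G_1}[s_j]=\{s,s_j,s'_j\}$ are disjoint facets. Your branch/center accounting also counts $\N_{G_1}[r_i]$ both in branch $i$ and in the center.

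A correct count goes as follows. The facets of $\cN(G_1)$ are $\N_{G_1}[r''_i]$, $\N_{G_1}[r_i]$, $\N_{G_1}[s''_i]$, $\N_{G_1}[s_i]$ (for $1\le i\le n+1$), together with $\N_{G_1}[r]$ and $\N_{G_1}[s]$. Note that $\N_{G_1}[r'_i]\supsetneq\N_{G_1}[r''_i]$ is not a facet.

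\begin{itemize}
\item $\N_{G_1}[r''_i]$ and $\N_{G_1}[r_i]$ both contain $r'_i$.
\item $\N_{G_1}[s''_i]$ and $\N_{G_1}[s_i]$ both contain $s'_i$.
\item $\N_{G_1}[r]$ and $\N_{G_1}[s]$ both contain $r$.
\end{itemize}

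Hence any matching has at most $2(n+1)+1$ facets. Equality forces $\N_{G_1}[r]$ or $\N_{G_1}[s]$ to be present. Either one contains both $r$ and $s$, so it excludes every $\N_{G_1}[r_i]$ and every $\N_{G_1}[s_i]$. What remains is exactly $M_1$ or $M_2$.
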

\begin{proof}
    Fix a positive integer $m$. Let $G$ be a graph defined in Example \ref{ex.reg} with $n=m$. Then it follows that $NI(G)^{[\nu]}=({\bf x}_{V_{M_1}},{\bf x}_{V_{M_2}})$, where $M_1$ and $M_2$ are defined in Example \ref{ex.reg}. It is clear that $\deg(NI(G)^{[\nu]})=5(m+1)+2$. Consider the following short exact sequence 
    \begin{equation} \label{eq:SES2}
    0 \longrightarrow \frac{S}{(({\bf x}_{V_{M_1}}):{\bf x}_{V_{M_2}})}(-(5(m+1)+2)) \longrightarrow \frac{S}{({\bf x}_{V_{M_1}})}
  \longrightarrow \frac{S}{NI(G)^{[\nu]}}   \longrightarrow  0.
\end{equation}
Observe that $(({\bf x}_{V_{M_1}}):{\bf x}_{V_{M_2}})=({\bf x}_{A})$, where $A=\{r_1,\ldots, r_{m+1}\}$. It is easy to see that $\reg(S/({\bf x}_{V_{M_1}}))=5(m+1)+1$ and $\reg(S/(({\bf x}_{V_{M_2}}):{\bf x}_{V_{M_1}}))=m$. Then, applying Lemma \ref{Lemma1.Reg} to Equation (\ref{eq:SES2}) yields $\reg(S/{NI(G)^{[\nu]}})=5(m+1)+m+1$. Hence, $\reg(NI(G)^{[\nu]})-\deg(NI(G)^{[\nu]})=m$, as desired.
\end{proof}

In the following, we define the class of trees that does not satisfy Condition \eqref{C1}. 

\begin{example} \label{ex.reg2}
    Let $G_2$ be the graph defined in Figure \ref{fig:graphG2} with $n \geq 1$. It is easy to see that the matching number of $\cN(G_2)$ is $4n+1$. Moreover, 
    \begin{equation*}
    \begin{split}
        M_1 &=\bigcup_{i=1}^n\{\N_{G_2}[r''_i],\N_{G_2}[\tilde{\tilde{r}}_i],\N_{G_2}[s''_i],\N_{G_2}[\tilde{\tilde{s}}_i]\} \cup \{\N_{G_2}[t_1]\}, \\
        M_2 &=\bigcup_{i=1}^n\{\N_{G_2}[r''_i],\N_{G_2}[\tilde{\tilde{r}}_i],\N_{G_2}[s''_i],\N_{G_2}[\tilde{\tilde{s}}_i]\} \cup \{\N_{G_2}[r]\}, \text{ and } \\
        M_3 &=\bigcup_{i=1}^n\{\N_{G_2}[r''_i],\N_{G_2}[\tilde{\tilde{r}}_i],\N_{G_2}[s''_i],\N_{G_2}[\tilde{\tilde{s}}_i]\} \cup \{\N_{G_2}[s]\},
    \end{split}
    \end{equation*}
    are the $4n+1$-matchings of $\cN(G_2)$. Furthermore, $G_2$ satisfies Condition \eqref{C2}, whereas it fails to satisfy Condition \eqref{C1}.
    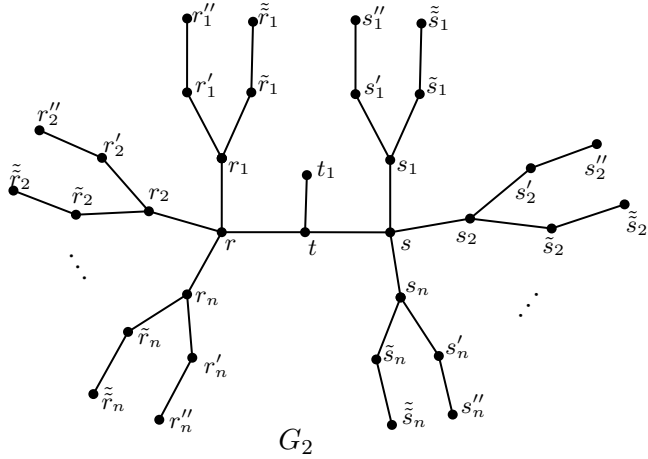
\begin{figure}[ht]
        \centering
\tikzset{every picture/.style={line width=0.75pt}} 

\begin{tikzpicture}[x=0.65pt,y=0.65pt,yscale=-1,xscale=1]

\draw    (239.03,1204.27) -- (239,1247.17) ;
\draw    (197.03,1235.03) -- (239,1247.17) ;
\draw    (219.03,1283.27) -- (239,1247.17) ;
\draw    (287,1247.17) -- (336.03,1247.27) ;
\draw    (336.03,1247.27) -- (336.03,1205.27) ;
\draw    (336.03,1247.27) -- (382.03,1239.27) ;
\draw    (336.03,1247.27) -- (342.03,1285.03) ;
\draw    (219.03,1166.03) -- (239.03,1204.27) ;
\draw    (219.07,1123.13) -- (219.03,1166.03) ;
\draw    (155.03,1237.03) -- (197.03,1235.03) ;
\draw    (119.03,1223.03) -- (155.03,1237.03) ;
\draw    (185.03,1305.03) -- (219.03,1283.27) ;
\draw    (165.07,1341.13) -- (185.03,1305.03) ;
\draw    (382.03,1239.27) -- (417.03,1210.03) ;
\draw    (417.03,1210.03) -- (455.03,1196.03) ;
\draw    (382.03,1239.27) -- (429.03,1245.03) ;
\draw    (429.03,1245.03) -- (471.03,1231.03) ;
\draw    (342.03,1285.03) -- (364.03,1319.03) ;
\draw    (328.03,1321.03) -- (337.03,1359.03) ;
\draw    (239,1247.17) -- (287,1247.17) ;
\draw    (287,1247.17) -- (288.03,1214.03) ;
\draw    (239.03,1204.27) -- (256.03,1166.03) ;
\draw    (257.03,1125.03) -- (256.03,1166.03) ;
\draw    (170.03,1204.03) -- (197.03,1235.03) ;
\draw    (134.03,1188.03) -- (170.03,1204.03) ;
\draw    (316.03,1167.03) -- (336.03,1205.27) ;
\draw    (316.07,1124.13) -- (316.03,1167.03) ;
\draw    (336.03,1205.27) -- (353.03,1167.03) ;
\draw    (354.03,1126.03) -- (353.03,1167.03) ;
\draw    (222.03,1320.03) -- (219.03,1283.27) ;
\draw    (203.03,1356.03) -- (222.03,1320.03) ;
\draw    (342.03,1285.03) -- (328.03,1321.03) ;
\draw    (364.03,1319.03) -- (372.03,1353.03) ;

\draw (239,1250) node [anchor=north west][inner sep=0.75pt]  [font=\footnotesize]  {$r$};
\draw (340,1250) node [anchor=north west][inner sep=0.75pt]  [font=\footnotesize]  {$s$};
\draw (241,1204.27) node [anchor=north west][inner sep=0.75pt]  [font=\footnotesize]  {$r_{1}$};

\draw (222,1280) node [anchor=north west][inner sep=0.75pt]  [font=\footnotesize]  {$r_{n}$};
\draw (338,1204) node [anchor=north west][inner sep=0.75pt]  [font=\footnotesize]  {$s_{1}$};
\draw (372,1245) node [anchor=north west][inner sep=0.75pt]  [font=\footnotesize]  {$s_{2}$};
\draw (343,1274.27) node [anchor=north west][inner sep=0.75pt]  [font=\footnotesize]  {$s_{n}$};
\draw (222.03,1153.27) node [anchor=north west][inner sep=0.75pt]  [font=\footnotesize]  {$r'_{1}$};
\draw (220.03,1112.27) node [anchor=north west][inner sep=0.75pt]  [font=\footnotesize]  {$r''_{1}$};
\draw (195.03,1220) node [anchor=north west][inner sep=0.75pt]  [font=\footnotesize]  {$r_{2}$};
\draw (169.03,1188) node [anchor=north west][inner sep=0.75pt]  [font=\footnotesize]  {$r'_{2}$};
\draw (131.03,1170) node [anchor=north west][inner sep=0.75pt]  [font=\footnotesize]  {$r''_{2}$};
\draw (151.03,1219) node [anchor=north west][inner sep=0.75pt]  [font=\footnotesize]  {$\tilde{r}_{2}$};
\draw (115.03,1206) node [anchor=north west][inner sep=0.75pt]  [font=\footnotesize]  {$\tilde{\tilde{r}}_{2}$};

\draw (227,1316) node [anchor=north west][inner sep=0.75pt]  [font=\footnotesize]  {$r'_{n}$};
\draw (207,1348) node [anchor=north west][inner sep=0.75pt]  [font=\footnotesize]  {$r''_{n}$};
\draw (189,1301) node [anchor=north west][inner sep=0.75pt]  [font=\footnotesize]  {$\tilde{r}_{n}$};
\draw (168,1335.37) node [anchor=north west][inner sep=0.75pt]  [font=\footnotesize]  {$\tilde{\tilde{r}}_{n}$};

\draw (406,1215) node [anchor=north west][inner sep=0.75pt]  [font=\footnotesize]  {$s'_{2}$};
\draw (446.03,1201) node [anchor=north west][inner sep=0.75pt]  [font=\footnotesize]  {$s''_{2}$};
\draw (422.03,1248) node [anchor=north west][inner sep=0.75pt]  [font=\footnotesize]  {$\tilde{s}_{2}$};
\draw (470.03,1233) node [anchor=north west][inner sep=0.75pt]  [font=\footnotesize]  {$\tilde{\tilde{s}}_{2}$};

\draw (366.03,1305.27) node [anchor=north west][inner sep=0.75pt]  [font=\footnotesize]  {$s'_{n}$};
\draw (375.03,1339.27) node [anchor=north west][inner sep=0.75pt]  [font=\footnotesize]  {$s''_{n}$};
\draw (330,1311) node [anchor=north west][inner sep=0.75pt]  [font=\footnotesize]  {$\tilde{s}_{n}$};
\draw (341.03,1344.27) node [anchor=north west][inner sep=0.75pt]  [font=\footnotesize]  {$\tilde{\tilde{s}}_{n}$};

\draw (154.68,1255) node [anchor=north west][inner sep=0.75pt]  [rotate=-59.76,xslant=0]  {$\cdots $};
\draw (404.73,1295.64) node [anchor=north west][inner sep=0.75pt]  [rotate=-307.35,xslant=0]  {$\cdots $};
\draw (287,1250) node [anchor=north west][inner sep=0.75pt]  [font=\footnotesize]  {$t$};
\draw (292,1204.17) node [anchor=north west][inner sep=0.75pt]  [font=\footnotesize]  {$t_{1}$};
\draw (258.03,1154.27) node [anchor=north west][inner sep=0.75pt]  [font=\footnotesize]  {$\tilde{r}_{1}$};
\draw (258.03,1112.27) node [anchor=north west][inner sep=0.75pt]  [font=\footnotesize]  {$\tilde{\tilde{r}}    _{1}$};
\draw (319.03,1154.27) node [anchor=north west][inner sep=0.75pt]  [font=\footnotesize]  {$s'_{1}$};
\draw (318,1114) node [anchor=north west][inner sep=0.75pt]  [font=\footnotesize]  {$s''_{1}$};
\draw (355.03,1155.27) node [anchor=north west][inner sep=0.75pt]  [font=\footnotesize]  {$\tilde{s}_{1}$};
\draw (356,1114) node [anchor=north west][inner sep=0.75pt]  [font=\footnotesize]  {$\tilde{\tilde{s}}_{1}$};
\draw (270,1360) node [anchor=north west][inner sep=0.75pt]   {$G_{2}$};

\filldraw[black] (239.03,1204.27)  circle (1.5pt) ;
\filldraw[black] (197.03,1235.03) circle (1.5pt) ;
\filldraw[black] (219.03,1283.27) circle (1.5pt) ;
\filldraw[black] (336.03,1247.27)  circle (1.5pt) ;
\filldraw[black] (287,1247.17) circle (1.5pt) ;
\filldraw[black] (239,1247.17) circle (1.5pt) ;
\filldraw[black] (336.03,1205.27)  circle (1.5pt) ;
\filldraw[black] (382.03,1239.27) circle (1.5pt) ;
\filldraw[black] (342.03,1285.03) circle (1.5pt) ;
\filldraw[black] (219.03,1166.03) circle (1.5pt) ;
\filldraw[black] (239.03,1204.27) circle (1.5pt) ;
\filldraw[black] (219.07,1123.13) circle (1.5pt) ;
\filldraw[black] (155.03,1237.03) circle (1.5pt) ;
\filldraw[black] (119.03,1223.03) circle (1.5pt) ;
\filldraw[black] (185.03,1305.03) circle (1.5pt) ;
\filldraw[black] (165.07,1341.13) circle (1.5pt) ;
\filldraw[black] (417.03,1210.03) circle (1.5pt) ;
\filldraw[black] (455.03,1196.03) circle (1.5pt) ;
\filldraw[black] (429.03,1245.03) circle (1.5pt) ;
\filldraw[black] (364.03,1319.03) circle (1.5pt) ;
\filldraw[black] (471.03,1231.03) circle (1.5pt) ;
\filldraw[black] (328.03,1321.03) circle (1.5pt) ;
\filldraw[black] (337.03,1359.03) circle (1.5pt) ;
\filldraw[black] (288.03,1214.03) circle (1.5pt) ;
\filldraw[black] (239.03,1204.27) circle (1.5pt) ;
\filldraw[black] (257.03,1125.03) circle (1.5pt) ;
\filldraw[black] (256.03,1166.03) circle (1.5pt) ;
\filldraw[black] (170.03,1204.03) circle (1.5pt) ;
\filldraw[black] (134.03,1188.03) circle (1.5pt) ;
\filldraw[black] (316.07,1124.13) circle (1.5pt) ;
\filldraw[black] (316.03,1167.03) circle (1.5pt) ;
\filldraw[black] (353.03,1167.03) circle (1.5pt) ;
\filldraw[black] (354.03,1126.03) circle (1.5pt) ;
\filldraw[black] (222.03,1320.03) circle (1.5pt) ;
\filldraw[black] (203.03,1356.03) circle (1.5pt) ;
\filldraw[black] (372.03,1353.03) circle (1.5pt) ;
\filldraw[black] (342.03,1285.03) circle (1.5pt) ;
\end{tikzpicture}
        \caption{The graph $G_2$}
        \label{fig:graphG2}
    \end{figure}
\end{example}

\begin{theorem} \label{thm.regm2}
    Let $m$ be a positive integer. Then there exists a tree $G$ that satisfies Condition \eqref{C2} but fails to satisfy Condition \eqref{C1}, such that $\reg(NI(G)^{[\nu]})-\deg(NI(G)^{[\nu]})=m$.
\end{theorem}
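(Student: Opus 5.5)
The plan is to take $G=G_2$ from Example~\ref{ex.reg2} with $n=m$. By that example, $G_2$ satisfies Condition~\eqref{C2}, fails Condition~\eqref{C1}, and has $\nu=4m+1$. I will also use, from the example, that $M_1,M_2,M_3$ are the \emph{only} $(4m+1)$-matchings. That claim is the step I expect to need the most care if it is not already accepted: one must check, e.g.\ via Lemma~\ref{lem.car} applied at $t$, that every maximum matching contains all $4m$ pendant two-element facets and exactly one of the facets meeting $t$. Granting it, Corollary~\ref{cor.unique} gives
\[
NI(G)^{[\nu]}=({\bf x}_{V_{M_1}},{\bf x}_{V_{M_2}},{\bf x}_{V_{M_3}}).
\]

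Next I factor out the common part. Let $Y=\bigcup_{i=1}^m\{\N_{G_2}[r''_i],\N_{G_2}[\tilde{\tilde{r}}_i],\N_{G_2}[s''_i],\N_{G_2}[\tilde{\tilde{s}}_i]\}$. Each of these facets has two elements, so ${\bf x}_{V_Y}$ has degree $8m$, and none of its variables lies in $\N_{G_2}[t_1]\cup\N_{G_2}[r]\cup\N_{G_2}[s]$. The three remaining facets are $\N[t_1]=\{t,t_1\}$, $\N[r]=\{r,t,r_1,\ldots,r_m\}$ and $\N[s]=\{s,t,s_1,\ldots,s_m\}$. Writing $f={\bf x}_{V_Y}\,x_t$, we get
\[
NI(G)^{[\nu]}=f\cdot\bigl(x_{t_1},\; x_r x_{r_1}\cdots x_{r_m},\; x_s x_{s_1}\cdots x_{s_m}\bigr)=f\cdot L .
\]
Hence $\deg(NI(G)^{[\nu]})=8m+1+(m+1)=9m+2$.

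Finally I compute the regularity. Multiplication by a monomial of degree $8m+1$ shifts the resolution, so $\reg(NI(G)^{[\nu]})=\reg(L)+8m+1$. The generators of $L$ have pairwise disjoint supports, so $L$ is a complete intersection of degrees $1,\,m+1,\,m+1$. The Koszul complex then gives $\reg(S/L)=0+m+m=2m$, i.e.\ $\reg(L)=2m+1$. Alternatively, Lemma~\ref{Lemma1.Reg} can be applied twice, as in the proof of Theorem~\ref{thm.regm}, since the colon ideals are generated by the other generators. Therefore $\reg(NI(G)^{[\nu]})=10m+2$, and the difference with $\deg(NI(G)^{[\nu]})=9m+2$ is exactly $m$.
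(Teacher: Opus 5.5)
Your proof is correct and reaches the paper's values: $\deg(NI(G)^{[\nu]})=9m+2$ and $\reg(NI(G)^{[\nu]})=10m+2$, i.e.\ $\reg(S/NI(G)^{[\nu]})=10m+1$. The setup is the same as the paper's: take $G=G_2$ with $n=m$ and read off the three generators from $M_1,M_2,M_3$. The difference is in how you compute the regularity.

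\textbf{The paper's route.} It keeps the generators as they are and applies Lemma~\ref{Lemma1.Reg} twice. First, $({\bf x}_{V_{M_1}}):{\bf x}_{V_{M_2}}=(x_{t_1})$, so the first two generators have linear quotients and $\reg(S/({\bf x}_{V_{M_1}},{\bf x}_{V_{M_2}}))=9m+1$. Then it uses the colon $(x_{t_1},{\bf x}_A)$, which has regularity $m$, together with the equality clause of the lemma (valid since $10m+2\neq 9m+1$).

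\textbf{Your route.} You factor out the gcd $f={\bf x}_{V_Y}x_t$ and note that $L$ is generated by monomials with pairwise disjoint supports, hence is a complete intersection. So the minimal free resolution of $NI(G)^{[\nu]}$ is the Koszul complex on $L$ shifted by $8m+1$.

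\textbf{Comparison.} Your factorization gives the whole graded Betti table, not only the regularity, and you never need the equality condition of Lemma~\ref{Lemma1.Reg}. The paper's computation follows the same template as Theorem~\ref{thm.regm} and does not require spotting the factorization. The colon ideals in the paper are exactly the colons inside $L$, so the alternative you mention at the end is precisely the paper's proof.

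\textbf{The list of maximum matchings.} The paper also just takes this from Example~\ref{ex.reg2}, and a direct count is quicker than Lemma~\ref{lem.car}. Each of the $2m$ branches at some $r_i$ or $s_i$ carries at most two pairwise disjoint facets, because $\N_{G_2}[r_i]$ meets both pendant facets of its branch. The facets $\N_{G_2}[t_1]$, $\N_{G_2}[r]$ and $\N_{G_2}[s]$ all contain $t$, so at most one of them can be used. Hence any $(4m+1)$-matching must contain all $4m$ pendant facets and exactly one of these three.
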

\begin{proof}
    Let $m$ be a positive integer. Let $G$ be a graph defined in Example \ref{ex.reg2} with $n=m$. Then it follows that $NI(G)^{[\nu]}=({\bf x}_{V_{M_1}},{\bf x}_{V_{M_2}},{\bf x}_{V_{M_3}})$, where $M_1$, $M_2$,  and $M_3$ are defined in Example \ref{ex.reg2}. Since $(({\bf x}_{V_{M_1}}):{\bf x}_{V_{M_2}})=(x_{t_1})$, it follows that the ideal $({\bf x}_{V_{M_1}},{\bf x}_{V_{M_2}})$ has linear quotients. Therefore, $\reg(S/({\bf x}_{V_{M_1}},{\bf x}_{V_{M_2}}))=9m+1$. Consider the following short exact sequence 
    \begin{equation} \label{eq:SES1b}
    0 \longrightarrow \frac{S}{(({\bf x}_{V_{M_1}},{\bf x}_{V_{M_2}}):{\bf x}_{V_{M_3}})}(-(9m+2)) \longrightarrow \frac{S}{({\bf x}_{V_{M_1}},{\bf x}_{V_{M_2}})} \longrightarrow \frac{S}{NI(G)^{[\nu]}}   \longrightarrow  0.
    \end{equation}
    Observe that $(({\bf x}_{V_{M_1}},{\bf x}_{V_{M_2}}):{\bf x}_{V_{M_3}})=(x_{t_1},{\bf x}_{A})$, where $A=\{r,r_1,\ldots, r_{m}\}$. Then, it follows that $\reg(S/({\bf x}_{V_{M_1}},{\bf x}_{V_{M_2}}):{\bf x}_{V_{M_3}}))=m$. Then, applying Lemma \ref{Lemma1.Reg} to Equation (\ref{eq:SES1b}) yields $\reg(S/{NI(G)^{[\nu]}})=10m+1$. Since $\deg(NI(G)^{[\nu]}) = 9m + 2$, we conclude that $\reg(NI(G)^{[\nu]}) - \deg(NI(G)^{[\nu]}) = m$. This completes the proof.
\end{proof}

Next, we study the regularity of the $\nu$-th square-free power of the closed neighborhood ideal of caterpillar graphs. We begin by recalling the definition of a caterpillar graph.

\vspace{2mm}

A graph $G$ is said to be a caterpillar graph if there exists a path $P \subseteq G$ such that every vertex of $G$ either lies on $P$ or is adjacent to a vertex on $P$. The path $P \subseteq G$ is referred to as the central path of $G$.

\begin{theorem} \label{thm.reg}
    Let $G$ be a caterpillar graph. Then, one has 
$$\reg \left(\frac{S}{NI(G)^{[\nu]}} \right)=\deg(NI(G)^{[\nu]})-1.$$
\end{theorem}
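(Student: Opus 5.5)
The plan is to prove the two inequalities separately. The lower bound $\reg(S/NI(G)^{[\nu]})\ge \deg(NI(G)^{[\nu]})-1$ is automatic. Indeed, $\beta_{0,j}(I)\neq 0$ for the maximal generator degree $j=\deg(I)$, so $\reg(I)\ge\deg(I)$ and $\reg(S/I)=\reg(I)-1$.

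All the work goes into the upper bound. The approach is to show that $I=NI(G)^{[\nu]}$ has linear quotients with respect to a suitable order in which generators are added, and then to bound the regularity by induction along that order using Lemma~\ref{Lemma1.Reg}. Since generators have different degrees, "linear quotients" here means that each colon ideal $(u_1,\ldots,u_{i-1}):u_i$ is generated by variables. Once this holds, the following argument applies. Set $I_i=(u_1,\ldots,u_i)$. Then $S/(I_{i-1}:u_i)$ is a quotient by variables and has regularity $0$. Lemma~\ref{Lemma1.Reg} gives
$$\reg(S/I_i)\le\max\{\deg u_i-1,\reg(S/I_{i-1})\},$$
and induction yields $\reg(S/I)\le \max_i \deg(u_i)-1=\deg(I)-1$.

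To obtain linear quotients, I would first check whether every caterpillar satisfies Conditions~\eqref{C1} and \eqref{C2}. If so, Theorem~\ref{thm.3} already supplies linear quotients with respect to $>_\ell$ and the argument is finished.

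The main obstacle is that this fails in general. Take a caterpillar whose central path is $r_1,r,s,t,t_1$ with a leaf attached at $s$, so that $\deg(s)=3$. Here $\N[r]$ and $\N[t]$ can both complete the same $(\nu-1)$-matching, which violates \eqref{C1}. By Theorem~\ref{thm.1} the ideal is then not componentwise linear, and hence has no linear quotients in any order. In that situation I would switch to a direct induction on the length of the central path $P=p_1,\ldots,p_\ell$. Let $w$ be the end vertex $p_\ell$ with its leaves. The facets touching the end are $\N[p_\ell]$, the edges $\{p_\ell,\text{leaf}\}$, and $\N[p_{\ell-1}]$.

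For the induction, I would split $I$ as $I=I'+I''$. Here $I'$ collects the generators whose matching uses a facet at the end (a two-element facet at $p_\ell$, or $\N[p_\ell]$ itself). The remaining generators form $I''$, which should be identified with ${\bf x}_F\cdot NI(G')^{[\nu-1]}$ or with $NI(G')^{[\nu]}$ for the smaller caterpillar $G'=G\setminus\{p_\ell,\text{leaves}\}$, depending on the parity of $\nu$ for the end segment. Taking $I'$ and $I''$ in this way, I would apply Lemma~\ref{Lemma1.Reg} generator by generator, or use the Mayer–Vietoris-type bound $\reg(S/(I'+I''))\le\max\{\reg(S/I'),\reg(S/I''),\reg(S/(I'\cap I''))-1\}$. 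The key computation is $I'\cap I''$. For caterpillars I expect it to equal a product of ${\bf x}_{\N[p_{\ell-1}]\cup\N[p_\ell]}$-type monomials with a squarefree power of a smaller caterpillar. Its regularity would then be controlled by induction, since the regularity of a product of ideals in disjoint variables is the sum of their regularities. A product with one extra variable raises the degree by exactly one, which matches the required "$-1$". Corollary~\ref{cor.unique} and Lemma~\ref{lem.car}, applied at $u=p_{\ell-1}$, control which matchings can occur in each piece.

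The delicate point will be verifying the shape of this intersection, and in particular that only one extra variable appears. I would handle separately the case where the two competing matchings differ by the shift $\N[p_{\ell-2}]\leftrightarrow\N[p_\ell]$. The regularity examples in Theorems~\ref{thm.regm} and \ref{thm.regm2} need pendant paths of length at least $2$ hanging off high-degree vertices, which caterpillars lack. This is the structural reason the colon ideals here stay generated by at most single variables times a fixed monomial of the right degree.
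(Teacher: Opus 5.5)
Your lower bound is correct, and you are right that linear quotients cannot cover all caterpillars. The graph of Example~\ref{ex.facet} (the path $1,\dots,9$ with a leaf $10$ at $5$) is a caterpillar violating \eqref{C1}. Your own example fails as stated: if the central path is exactly $r_1,r,s,t,t_1$, then $r_1$ is a leaf and $\N[r]\supsetneq\N[r_1]$ is not a facet.

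There is a genuine gap in the upper bound for the non-linear-quotient case. Your fallback there is only a plan, and its load-bearing steps are either unproved or incorrect. Take Example~\ref{ex.facet} with central path $2,\dots,8$ and end facet $\{8,9\}$.
\begin{itemize}
\item You get $I'=x_8x_9J$, where $J$ is generated by the $2$-matchings among $\{1,2\},\{5,10\},\{2,3,4\},\{3,4,5\},\{5,6,7\}$. The facet $\{5,6,7\}=\N_G[6]$ is not a facet of $\mathcal{N}(G\setminus\{8,9\})$; it is replaced there by $\{6,7\}$. So $J$ is not a squarefree power of $NI(G\setminus\{8,9\})$, and your induction hypothesis gives no control over $\reg(S/I')$.
\item You get $I'\cap I''=x_6x_7x_8x_9K$, where $K$ is generated by the $2$-matchings among $\{1,2\},\{5,10\},\{2,3,4\},\{3,4,5\}$. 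Then $\reg(S/(I'\cap I''))=8=\deg(I)$. So the bound you need from Mayer--Vietoris is sharp, and here it hinges on the containment $K\subseteq J$, for which you give no general argument.
\item Your closing heuristic that colons stay generated by single variables is false. With the $>_{\lex}$ order of Notation~\ref{Notation.facet} rooted at $1$, one finds $(u_1,u_2,u_3):u_4=(x_{10},x_3x_4)$. Since $I$ is not componentwise linear, every order must produce some colon with a non-linear generator.
\end{itemize}

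The missing idea is a mechanism that pays for these non-linear pieces, rather than hoping they are absent. The paper orders $\G(I)$ by $>_{\lex}$. Lemma~\ref{lem.colon} shows that each $(I_{j-1}:u_j)$ is generated by variables together with $\beta_j$ pairwise disjoint quadratic monomials, so $\reg(S/(I_{j-1}:u_j))=\beta_j$. Lemma~\ref{lem.degree} then shows that some later generator $u_i$ satisfies $\deg u_i\ge\deg u_j+\beta_j$. Feeding this into Lemma~\ref{Lemma1.Reg} gives $\reg(S/I)\le\max_j\deg(u_j)-1$. The excess $\beta_j$ is absorbed because $\deg(I)$ is the \emph{maximal} generator degree. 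To make your Mayer--Vietoris route work, you would need a comparable compensation statement for both $I'$ and $I'\cap I''$. It would have to hold for a class of ideals closed under your splitting: the pieces are generally squarefree powers of facet ideals of subcomplexes of $\mathcal{N}(G)$, not of $NI$ of smaller caterpillars.
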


To complete the proof of Theorem \ref{thm.reg}, we need to prove a  couple of lemmas. We begin by fixing some notation. Let $I$ be an ideal of $S$ and let $\G(I)=\{u_1,\ldots, u_m\}$ be the minimal generating set. We set $I_j=(u_1,\ldots,u_j)$ for $j=1,2,\ldots,m$.

\begin{lemma} \label{lem.colon}
    Let $G$ be a caterpillar graph. Let $I$ be the $\nu$-th squarefree power of the closed neighborhood ideal of $G$ with $\G(I)=\{u_1,\ldots, u_m\}$. Then there exists a total order on $\G(I)$ such that for all $j=2,\ldots,m$, the colon ideal $(I_{j-1}:u_{j})$ is generated either by variables or by variables together with distinct quadratic monomials.
\end{lemma}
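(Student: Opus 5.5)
We would order the generators of $I=NI(G)^{[\nu]}$ using the order $>_\ell$ of Notation~\ref{notation.total}, or a variant adapted to the central path. Concretely, we fix the central path $P: p_1,\ldots,p_\ell$ and root the caterpillar at a pendent vertex adjacent to $p_1$. By Corollary~\ref{cor.unique}, each generator $u_j$ corresponds to a unique $\nu$-matching $M_j$. Since $I$ is squarefree and each $u_j={\bf x}_{V_{M_j}}$, the colon ideal is
\[
I_{j-1}:u_j=\bigl({\bf x}_{V_{M_i}\setminus V_{M_j}} : i<j\bigr).
\]
The task is therefore to show two things. First, every set $V_{M_i}\setminus V_{M_j}$ with $i<j$ contains either a singleton $V_{M_k}\setminus V_{M_j}$ with $k<j$, or a two-element set $V_{M_k}\setminus V_{M_j}$ with $k<j$. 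Second, the quadratic generators that survive minimalization have pairwise disjoint supports ("distinct quadratic monomials").

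The structural input is the shape of facets of $\mathcal{N}(G)$ for a caterpillar. Every facet is $\N_G[v]$ with $v$ on the central path, or with $v$ a leaf (giving $\{v,p\}$). Hence any two facets meet along a subpath of length at most two of $P$, together with shared leaves only at a single spine vertex. For $M_i$ and $M_j$ we would build the bipartite forest $B_{(M_i,M_j)}$ (Construction~\ref{cons:bipartite}, Lemma~\ref{lem.tree}). Using Remark~\ref{Rem.intersection}, we pick $F\in M_i$ meeting exactly one $G\in M_j$. In a caterpillar, $G$ and $F$ are centered at spine vertices at distance at most $2$, or one of them is a leaf neighborhood. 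We then form $W=(M_j\setminus G)\cup F$, which is a $\nu$-matching with $V_W\setminus V_{M_j}=F\setminus G\subseteq V_{M_i}\setminus V_{M_j}$. The case analysis on the relative position of $F$ and $G$ along $P$ shows $|F\setminus G|\le 2$, after possibly replacing $F$ by the neighborhood of an intermediate spine vertex. Here we use that leaves attached to spine vertices of degree $\ge 3$ in $\N_G[v]$ force any competing facet to share most of its leaves. The case $|F\setminus G|=2$ arises exactly when $F=\N_G[p_a]$ and $G=\N_G[p_{a+2}]$ with $\deg p_{a+1}=2$, or similar "shift by one along a degree-two stretch" configurations. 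This mirrors the bad configuration in Theorem~\ref{thm.2} with $n=1$.

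Next we must check that $W$ precedes $M_j$ in the chosen order. For this, we would refine $>_\ell$ so that replacing $G$ by a facet with fewer vertices, or one closer to the root, increases the matching. The argument is analogous to Proposition~\ref{prop:level} and the final claim in the proof of Theorem~\ref{thm.3}. In a caterpillar the levels along $P$ are monotone, which makes this comparison transparent.

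The main obstacle is the disjointness of the quadratic generators. Each quadratic generator $x_ax_b$ comes from a pair of positions on $P$ with $a,b$ the two vertices of $F\setminus G$. We would show that a variable appearing as a linear generator kills every quadratic containing it. We would also show that two different quadratic generators $x_ax_b$, $x_ax_c$ cannot both survive: otherwise the two corresponding swaps at overlapping spine positions combine into a swap with a single new vertex, already in $I_{j-1}:u_j$, or contradict maximality of $\nu$ via Lemma~\ref{lem.car}. We expect this combinatorial bookkeeping along the spine, which splits by whether the swapped facets are on the left or right of $G$, to be the delicate step.
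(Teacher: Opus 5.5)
Your reduction matches the paper's: for $i<j$ you want $k<j$ with $V_{M_k}\setminus V_{M_j}\subseteq V_{M_i}\setminus V_{M_j}$ of size at most $2$, obtained by a one-facet swap. The gap is in how you choose the swap.

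\textbf{Main gap: the order.} Remark~\ref{Rem.intersection} gives some $F\in M_i$ meeting exactly one $G\in M_j$, but says nothing about whether $F>G$. When $M_i$ and $M_j$ differ in several facets, nothing stops that pair from having $F<G$, in which case $W=(M_j\setminus G)\cup F$ comes after $M_j$.

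Your fix, arguing as in Proposition~\ref{prop:level} and the end of the proof of Theorem~\ref{thm.3}, is not available, because those arguments use \eqref{C1} and \eqref{C2}. Caterpillars need not satisfy \eqref{C1}. The graph of Example~\ref{ex.facet} is a caterpillar (spine $1,\dots,9$, leaf $10$ at $5$) violating \eqref{C1} along $3,4,5,6,7$. By Example~\ref{ex.counter} its $\nu$-th squarefree power is not componentwise linear, so it has no linear quotients in any order, and quadratic colon generators genuinely occur.

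The missing idea is to use $>_{\lex}$ and take $F$ to be the \emph{largest} facet of $M_i$ not in $M_j$, i.e.\ the first position where the decreasingly sorted lists differ. Facets of $M_j$ in earlier positions also lie in $M_i$, hence are disjoint from $F$. So every facet of $M_j$ meeting $F$ is smaller than $F$. It then remains to check that $F$ meets only one facet of $M_j$:
\begin{itemize}
\item If $|F|=2$, then $F=\{\ell,v\}$ with $\ell$ a leaf lying in no other facet, so $F$ meets $M_j$ only through $v$.
\item If $|F|=3$, all later facets of $M_j$ are smaller than $F$, hence have size $3$ and lie along the spine. Of two such facets meeting $F$ on both sides, one is closer to the root and hence larger than $F$, a contradiction.
\end{itemize}

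\textbf{Secondary errors.}
\begin{itemize}
\item Your picture of the facets is wrong. If a spine vertex $v$ carries a leaf $\ell$, then $\N_G[\ell]=\{\ell,v\}\subsetneq \N_G[v]$, so $\N_G[v]$ is not a facet. Every facet of $\cN(G)$ therefore has at most three vertices, and $|F\setminus G|\le 2$ is immediate with no case analysis.
\item You have reversed where quadrics come from. Take $F=\N_G[p_a]$, $G=\N_G[p_{a+2}]$ and $\deg p_{a+1}=2$. Then $(M_j\setminus G)\cup \N_G[p_{a+1}]$ precedes $M_j$ and contributes the single variable $x_{p_a}$, so $x_{p_{a-1}}x_{p_a}$ is not minimal. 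Quadrics survive exactly when $p_{a+1}$ carries a leaf, which is the \eqref{C1} configuration. Caterpillars satisfy \eqref{C2} automatically, so Theorem~\ref{thm.2} is not the relevant model.
\item Disjointness has a direct argument. Each surviving quadric is an edge of $G$ outside $V_{M_j}$. Suppose two share a vertex $i_2$. If $\deg i_2=2$, then $\N_G[i_2]$ avoids $V_{M_j}$. Otherwise $i_2$ carries a leaf $\ell$, and $\{\ell,i_2\}$ avoids $V_{M_j}$. Either way $M_j$ extends to a $(\nu+1)$-matching, a contradiction.
\end{itemize}
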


\begin{proof}
Following Notation~\ref{Notation.facet}, we order the generators of $I$ by $u_1 >_{\lex} \cdots >_{\lex} u_m$.
For each $i$, let $M_i$ denote the $\nu$-matching associated with $u_i$.
Let
\[
M_a=\{F_{i_1,j_1,k_1},\ldots,F_{i_\nu,j_\nu,k_\nu}\}
\quad\text{and}\quad
M_b=\{F_{i'_1,j'_1,k'_1},\ldots,F_{i'_\nu,j'_\nu,k'_\nu}\}
\]
be two $\nu$-matchings of $\cN(G)$ written in decreasing order with respect to $>$. 
Assume that $a<b$ and set $s=\min\{\ell : F_{i_\ell,j_\ell,k_\ell}\neq F_{i'_\ell,j'_\ell,k'_\ell}\}$.
Since $M_a>_{\lex}M_b$, we have
$F_{i_s,j_s,k_s} > F_{i'_s,j'_s,k'_s}$. 
We claim that $F_{i_s,j_s,k_s}$ intersects exactly one facet of $M_b$.
First observe that there exists $t\ge s$ such that $F_{i_s,j_s,k_s}\cap F_{i'_t,j'_t,k'_t}\neq\emptyset$,
since otherwise $M_b\cup\{F_{i_s,j_s,k_s}\}$ would form a $(\nu+1)$-matching of $\cN(G)$, contradicting the matching number. Note that every facet of $\cN(G)$ has cardinality at most $3$.  If $|F_{i_s,j_s,k_s}|=3$, then for all $q\ge s$ we have
\begin{equation}\label{eq.s>q}
F_{i_s,j_s,k_s}>F_{i'_q,j'_q,k'_q},
\end{equation}
and hence $|F_{i'_q,j'_q,k'_q}|=3$ for all $q\ge s$. 
If $F_{i_s,j_s,k_s}$ intersected two distinct facets of $M_b$, say 
$F_{i'_p,j'_p,k'_p}$ and $F_{i'_q,j'_q,k'_q}$ with $p,q\ge s$. 
Since these facets are contained in the central path $P$, it follows that either 
$F_{i'_p,j'_p,k'_p}>F_{i_s,j_s,k_s}$ or 
$F_{i'_q,j'_q,k'_q}>F_{i_s,j_s,k_s}$, contradicting \eqref{eq.s>q}. 
Thus $F_{i_s,j_s,k_s}$ intersects exactly one facet of $M_b$. If $|F_{i_s,j_s,k_s}|=2$, the conclusion clearly holds. 
Let $t$ be the unique index such that
$
F_{i_s,j_s,k_s}\cap F_{i'_t,j'_t,k'_t}\neq\emptyset$.
Then
\[
M_c=(\{F_{i_s,j_s,k_s}\}\cup M_b)\setminus\{F_{i'_t,j'_t,k'_t}\}
\]
is a $\nu$-matching of $\cN(G)$, and since 
$F_{i_s,j_s,k_s}>F_{i'_t,j'_t,k'_t}$ we obtain $M_c>_{\lex}M_b$.

Hence, for every $a<b$ there exists $c<b$ such that $M_c$ and $M_b$ differ by exactly one facet. 
Since every facet of $\cN(G)$ has size at most $3$, we have$
|V_{M_c}\setminus V_{M_b}|\le 2$.
This implies that the colon ideal $(I_{j-1}:u_j)$ is generated either by variables or by variables together with quadratic monomials.

It remains to show that the quadratic generators are distinct. 
Suppose that $x_{i_1}x_{i_2}$ and $x_{i_3}x_{i_4}$ belong to $\G(I_{j-1}:u_j)$ with $
|\{i_1,i_2\}\cap\{i_3,i_4\}|=1$.
Then $\{i_1,i_2\}$ and $\{i_3,i_4\}$ are edges of $G$. 
Without loss of generality assume $i_2=i_3$, so that $
P'=\{\{i_1,i_2\},\{i_2,i_4\}\}$
forms a path in $G$, and every vertex of $P'$ lies outside $V_{M_j}$.

If $\deg_G(i_2)=2$, then $M_j\cup \N_G[i_2]$ is a matching of $\cN(G)$, contradicting the matching number. 
If $\deg_G(i_2)\ge 3$, there exists $i_5\in\N_G[i_2]\setminus\{i_1,i_4\}$ and again 
$M_j\cup\N_G[i_5]$ forms a matching, yielding the same contradiction. 
Therefore $\{i_1,i_2\}\cap\{i_3,i_4\}=\emptyset$,
which proves that the quadratic generators are distinct.
\end{proof}

\begin{lemma} \label{lem.degree}
    Under the assumption of Lemma \ref{lem.colon}, let $j \in [m]$. If the colon ideal $(I_{j-1}:u_{j})$ is generated by variables together with $k$ distinct quadratic monomials, then there exists $i>j$ such that $\deg(u_{j}) +k-1 < \deg(u_i)$.
\end{lemma}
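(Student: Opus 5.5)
The plan is to construct explicitly a later generator $u_i$ of larger degree, by trading facets of $M_j$ for larger facets. Let $x_{a_1}x_{b_1},\ldots,x_{a_k}x_{b_k}$ be the quadratic minimal generators of $(I_{j-1}:u_j)$. By Lemma~\ref{lem.colon} their supports are pairwise disjoint. By the proof of Lemma~\ref{lem.colon}, each such generator arises from a $\nu$-matching $M_c$ with $c<j$ that differs from $M_j$ in exactly one facet: $M_c=(M_j\setminus\{G_t\})\cup\{F_t\}$ with $F_t>G_t$ and $F_t\setminus G_t=\{a_t,b_t\}$. Since every facet of $\cN(G)$ has size at most $3$, this forces $|F_t|=3$ and $|F_t\cap G_t|=1$, and then $F_t>G_t$ forces $|G_t|=3$. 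Both $F_t$ and $G_t$ are closed neighbourhoods of central-path vertices. Moreover, $\{a_t,b_t\}\cap V_{M_j}=\emptyset$, since these variables are not in $\supp(u_j)$.

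\emph{Step 1 (local picture).} On the central path, $F_t$ and $G_t$ meet in a single vertex $s_t$, and $F_t=\N_G[p_t]$ with $p_t$ adjacent to $s_t$. I would show that the vertex $s_t$ (or the appropriate adjacent one) has a pendant leaf or otherwise a larger neighbourhood. The idea is to compare with the $>_{\lex}$-maximality of $M_j$ among the matchings preceding it. Because the colon contains no variable dividing $x_{a_t}x_{b_t}$, no facet of size $2$ can replace $G_t$ to add only $a_t$ or $b_t$. The aim is to identify a facet $H_t\ni s_t$ of size at least $|G_t|+1$ that is disjoint from $V_{M_j\setminus\{G_t\}}$. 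If this fails, I would instead use $\N_G[s_t]$ together with the free vertices $a_t,b_t$, which lie outside $V_{M_j}$, to build a neighbourhood covering $a_t,b_t$ and $G_t\setminus\{s_t\}$.

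\emph{Step 2 (global swap).} The supports $\{a_t,b_t\}$ are pairwise disjoint, and the facets $G_t$ are distinct, since distinct quadratics come from distinct swaps. I would therefore perform all $k$ replacements simultaneously, $G_t\mapsto H_t$, and check that the $H_t$ are pairwise disjoint and disjoint from the untouched facets of $M_j$. This yields a $\nu$-matching $M_i$ with
\[
|V_{M_i}|\ \ge\ |V_{M_j}|+k,
\]
so $\deg(u_i)\ge \deg(u_j)+k>\deg(u_j)+k-1$.

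\emph{Step 3 (ordering).} To get $i>j$, note that every facet has size at most $3$. A facet of larger cardinality is smaller in the order of Notation~\ref{Notation.facet}. Since each replacement substitutes a size-$3$ facet by a facet that is not larger, $M_i<_{\lex}M_j$, hence $i>j$.

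The main obstacle is Step 1: proving that a strictly larger disjoint facet exists near each swap. In a caterpillar all facets have size at most $3$, so "larger" must come from counting vertices gained across several facets rather than from single facet sizes. I would therefore first try a weaker target: replace the collection $\{G_1,\ldots,G_k\}$ by a collection of $k$ facets covering at least $3k+k$ vertices. This would use the disjoint free pairs $\{a_t,b_t\}$ together with the caterpillar structure to re-partition the segments of the central path.
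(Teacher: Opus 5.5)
There is a genuine gap, and it sits exactly at the step you flag as the main obstacle. It is not a technical obstacle: every version of Step~1 you propose is impossible. Your reduction is correct and matches the paper's starting point: each quadratic generator comes from a single swap $(M_j\setminus\{G_t\})\cup\{F_t\}$ with $|F_t|=|G_t|=3$, $|F_t\cap G_t|=1$, and a free pair $\{a_t,b_t\}$. But in a caterpillar every facet of $\cN(G)$ has at most three vertices. So there is no facet $H_t$ with $|H_t|\ge |G_t|+1=4$. Your weaker target, replacing $\{G_1,\ldots,G_k\}$ by $k$ facets covering $4k$ vertices, fails for the same reason, since $k$ facets cover at most $3k$ vertices. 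The fallback through $\N_G[s_t]$ also fails. Minimality of $x_{a_t}x_{b_t}$ forces $\deg_G(s_t)\ge 3$: this is how the paper obtains $\deg_G(i_\theta)\ge 3$, since otherwise $\N_G[s_t]$ would be a facet exceeding $G_t$ by a single free vertex from $\{a_t,b_t\}$. Hence $s_t$ has a leaf neighbour, and $\N_G[s_t]$ is not a facet. In short, no construction that only trades away the facets $G_t$ can increase $|V_M|$.

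The missing idea is that the extra vertex must be gained elsewhere in $M_j$, by trading a size-$2$ facet for a size-$3$ one. The paper leaves $G_t$ in place. Write $F_t=\N_G[i_{\theta-1}]$ and $G_t=\N_G[i_{\theta+1}]$ along the central path, so that $\{a_t,b_t\}=\{i_{\theta-2},i_{\theta-1}\}$. The paper adds $\N_G[i_{\theta-2}]$, which meets $V_{M_j}$ only in $i_{\theta-3}$ (and must meet it, by maximality of $\nu$), and removes the facet of $M_j$ through $i_{\theta-3}$. If that facet has size $2$, one vertex is gained. If it is $\N_G[i_{\theta-4}]$, the pair $i_{\theta-5},i_{\theta-4}$ becomes free, and one repeats with $\N_G[i_{\theta-5}]$, and so on. The chain must end at a size-$2$ facet, since otherwise at the end of the path one finds a facet disjoint from the current matching, contradicting the matching number. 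The resulting matching has exactly one more vertex and one fewer size-$2$ facet. The latter is what gives $<_{\lex}$, replacing your Step~3. The paper then checks that the other quadratic generator still lies in the colon of the new generator and iterates. This sequential approach avoids the disjointness bookkeeping that your simultaneous swap in Step~2 would require.
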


\begin{proof}
Let $u_1>_{\lex}\cdots>_{\lex}u_m$ be the order on $\G(I)$ introduced in the proof of 
Lemma~\ref{lem.colon}, and let 
$M_i=\{F_{i_1},\ldots,F_{i_\nu}\}$ denote the $\nu$-matching associated with $u_i$.

Assume that there exists $j\in[m]$ such that the minimal generating set of the
colon ideal $(I_{j-1}:u_j)$ contains exactly two distinct quadratic monomials
$x_{i_1}x_{i_2}$ and $x_{i_3}x_{i_4}$ with the level of $i_1$ strictly less than that of $i_3$. 
Since $x_{i_1}x_{i_2}\in\G(I_{j-1}:u_j)$, it follows from the proof of 
Lemma~\ref{lem.colon} that there exists $r<j$ such that $M_r$ and $M_j$
differ by exactly one facet and
$(u_r:u_j)=x_{i_1}x_{i_2}$.
Hence there exist indices $p$ and $q$ such that
\[
M_r\setminus\{F_{r_p}\}=M_j\setminus\{F_{j_q}\}
\quad\text{and}\quad
F_{r_p}\setminus F_{j_q}=\{i_1,i_2\}.
\]

Moreover, by minimality of the colon generator there is no facet $F\in\mathcal{F}(\cN(G))$ such that $F\setminus F_{j_q}\subset F_{r_p}\setminus F_{j_q}$.
Consequently, along the central path 
\[
P=\{\ldots,i_{\theta-2},i_{\theta-1},i_\theta,i_{\theta+1},i_{\theta+2},\ldots\}
\]
of the caterpillar $G$ we have $F_{r_p}=\N_G[i_{\theta-1}]$ and $F_{j_q}=\N_G[i_{\theta+1}]$, with $\deg_G(i_\theta)\ge3$.  Note that $\deg_G(i_{\theta-2}) = 2$ and $\deg_G(i_{\theta-3}) \geq 2$, otherwise, the facet corresponding to a pendant vertex is disjoint from $V_{M_j}$, which contradicts the matching number of $\cN(G)$. There exists
a facet $F_{j_{l'}}\in M_j$ such that $
\N_G[i_{\theta-2}]\cap F_{j_{l'}}\neq\emptyset$;
otherwise, $M_j\cup\{\N_G[i_{\theta-2}]\}$ would form a matching of $\cN(G)$, contradicting the matching number.

Set
\[
N_1=(M_j\setminus\{F_{j_{l'}}\})\cup\N_G[i_{\theta-2}].
\]
If $|F_{j_{l'}}|=2$, we set $M_{i'}=N_1$.
Otherwise $|F_{j_{l'}}|=3$, in which case $F_{j_{l'}}=\N_G[i_{\theta-4}]$. 
Repeating the above argument along the path, we construct successively
\[
N_t=(N_{t-1}\setminus\{F_{j_l}\})\cup\N_G[i_{\theta-(3t-1)}],
\]
where $F_{j_l}$ denotes the corresponding facet in $M_j$.

This process terminates at some step $t$ when the replaced facet has
cardinality $2$. Indeed, if all replaced facets had cardinality $3$ and
$i_{\theta-3(t+1)}$ were a pendant vertex of $G$, then $
N_t\cup\{\N_G[i_{\theta-3(t+1)}]\}$
would form a matching of $\cN(G)$, again contradicting the matching number.
Let $M_{i'}=N_t$. By construction, $
|\{F\in M_j:|F|=2\}|>|\{F\in M_{i'}:|F|=2\}|$,
and hence $M_j>_\lex M_{i'}$. Moreover, $
|V_{M_{i'}}|=|V_{M_j}|+1$.

Let $
A=\{\N_G[i_{\theta-(3k-1)}]:k=1,\ldots,t\}$.
Since $V_A\setminus\{i_1,i_2\}\subset V_{M_j}$ while
$\{i_3,i_4\}\nsubseteq V_{M_j}$, the vertices $i_3$ and $i_4$
do not belong to $V_A$. Consequently, the colon ideal
$(I_{i'-1}:u_{i'})$ contains the quadratic monomial $x_{i_3}x_{i_4}$, since the level of $i_1$ is less than that of $i_3$. Repeating the same argument, we obtain a matching $M_{i''}$ such that
\[
M_{i'}>_\lex M_{i''} \qquad\text{and}\qquad |V_{M_{i''}}|=|V_{M_{i'}}|+1.
\]
Therefore, $
\deg(u_{i''})=\deg(u_j)+2$,
which completes the proof.
\end{proof}

\begin{proof}[Proof of Theorem~\ref{thm.reg}]  
    Let $I$ be the $\nu$-th squarefree power of the closed neighborhood ideal of $G$ and let $\G(I)=\{u_1,\ldots, u_m\}$.  It is clear that $\reg(S/I) \geq \max_{1\leq i \leq n}\{\deg(u_i)\}-1$. Thus it suffices to prove that $\reg(S/I) \leq \max_{1\leq i \leq n}\{\deg(u_i)\}-1$. Using Notation \ref{Notation.facet}, we order the elements of $\G(I)$ by $u_1>_\lex \cdots >_\lex u_m$. Set $d_j=\deg(u_j)$ for $j =1,\ldots,m$. For $j =2,\ldots,m$ consider the following sequence of short exact sequences  

\begin{equation} \label{eq:SES}
    0 \longrightarrow \frac{S}{(I_{j-1}:u_j)}(-d_j) \longrightarrow \frac{S}{I_{j-1}}
  \longrightarrow \frac{S}{I_j}   \longrightarrow  0.
\end{equation}
Applying Lemma \ref{Lemma1.Reg} to Equation (\ref{eq:SES}), yields $\reg(S/(I))\leq \max\{\alpha,\reg(S/I_1)\}$, where $\alpha = \max_{2\leq j \leq m}\{\reg(S/(I_{j-1}:u_j))+d_j-1\}$. By \cite[Theorem 2.18]{Z2004}, if $\Delta$ is a $1$-dimensional forest, then $\reg(S/I(\Delta))$ equals the maximal number of pairwise disjoint facets in $\Delta$. Hence, by Lemma~\ref{lem.colon}, the ideal $(I_{j-1}:u_j)$ is generated by variables and $\beta_j$ distinct quadratic monomials, where $\beta_j$ denotes the number of quadratic generators. Consequently,
$\reg(S/(I_{j-1}:u_j))=\beta_j$. 
Thus $\alpha = \max_{2\leq j \leq m}\{\beta_j+d_j-1\}$. By Lemma~\ref{lem.degree}, for every $j$ there exists $i>j$ such that $\beta_j+d_j \leq d_i$. Therefore, $\alpha \leq \max_{2\leq j \leq m}\{d_j-1\}$. Since $\reg(S/I_1)=d_1-1$, we conclude that
$
\reg(S/I)\le 
\max\{\max_{2\le j\le m}(d_j-1),\,d_1-1\}
=\max_{1\le j\le m}\{d_j-1\}, \text{ as desired}.
$
\end{proof}

\end{document}